\documentclass{amsart}

\usepackage{amssymb}
\usepackage{xcolor}
\usepackage{listings}
\usepackage{hyperref}

\lstdefinelanguage{GAP}{
  morekeywords={and,break,continue,do,elif,else,end,fi,for,function,
    if,in,local,mod,not,od,or,repeat,return,then,until,while,
    fail,true,false},
  sensitive=true,
  morecomment=[l]\#,
  morestring=[b]",
}
\newtheorem{theorem}{Theorem}[section]
\newtheorem{proposition}[theorem]{Proposition}
\newtheorem{lemma}[theorem]{Lemma}

\theoremstyle{definition}
\newtheorem{definition}[theorem]{Definition}

\theoremstyle{remark}
\newtheorem{remark}[theorem]{Remark}

\newcommand{\FF}{\mathbb{F}}
\newcommand{\PP}{\mathbb{P}}
\newcommand{\ZZ}{\mathbb{Z}}

\DeclareMathOperator{\Aut}{Aut}
\DeclareMathOperator{\Sym}{Sym}
\DeclareMathOperator{\PSL}{PSL}
\DeclareMathOperator{\PGL}{PGL}
\DeclareMathOperator{\PGammaL}{P\Gamma L}
\DeclareMathOperator{\GL}{GL}
\DeclareMathOperator{\SL}{SL}
\DeclareMathOperator{\SO}{SO}
\newcommand{\GAP}{\textsf{GAP}}

\begin{document}

\title{Total 3-closure for projective special linear groups}

\author{Ting Gong}
\address{Department of Mathematics, University of Washington, Seattle, WA 98195}
\email{tgong2@uw.edu}

\author{Yong Yang}
\address{Department of Mathematics, Texas State University, San Marcos, TX 78666}
\email{yang@txstate.edu}

\author{Michael Ruofan Zeng}
\address{Department of Mathematics, University of Washington, Seattle, WA 98195}
\email{zengrf@uw.edu}

\subjclass[2020]{Primary 20B25; Secondary 20D06}

\keywords{$k$-closure, totally $3$-closed group, projective special linear group, permutation group, base size}

\date{August 3, 2026}

\begin{abstract}
A finite group is totally $3$-closed if every faithful permutation representation of it is $3$-closed. We study this property for the finite simple projective special linear groups. We prove that $\PSL_2(q)$ is totally $3$-closed if and only if $q\geq 7$ is prime, and that $\PSL_3(q)$ is totally $3$-closed if and only if either $q=3$, or $q$ is prime and $q\equiv 2\pmod 3$. We further prove that $\PSL_4(q)$ is never totally $3$-closed and that $\PSL_n(q)$ is not totally $3$-closed whenever $n\geq 5$ and $q>2$. Within the family $\PSL_n(q)$, only the groups $\PSL_n(2)$ with $n\geq 5$ remain unresolved. In particular, this answers Problem~20.2 of the Kourovka Notebook affirmatively.
\end{abstract}

\maketitle

\section{Introduction}

Let $G\leq\Sym(\Omega)$ and let $k\geq1$. The $k$-closure $G^{(k),\Omega}$ is the largest subgroup of $\Sym(\Omega)$ having the same orbits as $G$ on $\Omega^k$. The group $G$ is $k$-closed on $\Omega$ if $G^{(k),\Omega}=G$. This property depends on the permutation representation. A finite abstract group $G$ is totally $k$-closed if every faithful action of $G$ on a finite set is $k$-closed. The least such $k$ is called the closure number of $G$. The notion of $k$-closure goes back to Wielandt \cite{Wielandt1964}, while total closure was introduced and developed more recently in \cite{ChurikovPraeger2021,FreedmanGiudiciPraeger2024}.

The case $k=2$ has been studied extensively. Abdollahi and Arezoomand classified the finite nilpotent totally $2$-closed groups \cite{AbdollahiArezoomand2018}, and Abdollahi, Arezoomand, and Tracey proved that the same classification holds among all finite solvable groups \cite{AbdollahiArezoomandTracey2022}. Among the finite nonabelian simple groups, Arezoomand, Iranmanesh, Praeger, and Tracey proved that exactly six are totally $2$-closed:
\[J_1,\qquad J_3,\qquad J_4,\qquad \mathrm{Ly},\qquad \mathrm{Th},\qquad \mathbb M.\]
In particular, all six are sporadic, and no finite simple group of Lie type is totally $2$-closed \cite{ArezoomandIranmaneshPraegerTracey2024}.

This led to Problem~20.2 of the Kourovka Notebook \cite[Problem~20.2]{kourovka21}:
\begin{quote}
Are there any nonabelian simple groups of Lie type which are totally $3$-closed?
\end{quote}
Freedman, Giudici, and Praeger placed this question in the broader study of closure numbers of finite simple groups. They proved that the closure number of $A_n$ is $n-1$, established uniform upper bounds for the remaining families, and posed further questions on the closure numbers of finite simple classical groups \cite{FreedmanGiudiciPraeger2024}.

We answer Problem~20.2 affirmatively and determine total $3$-closedness throughout the family of projective special linear groups, apart from one remaining infinite family.

\begin{theorem}\label{main}
Let $G=\PSL_n(q)$ be a finite nonabelian simple group. Then the following hold.
    \begin{enumerate}
        \item[\textup{(1)}] If $n=2$, then $G$ is totally $3$-closed if and only if $q\geq 7$ is a prime.
        \item[\textup{(2)}] If $n=3$, then $G$ is totally $3$-closed if and only if either $q=3$, or $q$ is a prime with $q\equiv 2\pmod 3$.
        \item[\textup{(3)}] If $n=4$, then $G$ is not totally $3$-closed for any $q$.
        \item[\textup{(4)}] If $n\geq 5$ and $q>2$, then $G$ is not totally $3$-closed.
    \end{enumerate}
\end{theorem}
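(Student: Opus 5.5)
The plan rests on two reductions. First, a faithful action of a simple group $G$ decomposes into orbits, each equivalent to the action on cosets $G/H$ for some proper subgroup $H$, and closure of an intransitive action is controlled by its transitive constituents together with how they are glued. Second, whether $G$ is $3$-closed on $G/H$ can be tested through the overgroups of $G$ in $\Sym(G/H)$ that preserve all orbitals on triples. Concretely, $G$ fails to be $3$-closed on $G/H$ exactly when some $\sigma\in\Sym(G/H)\setminus G$ maps every $3$-tuple to one in the same $G$-orbit.

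For the negative results I will exhibit explicit bad actions.
\begin{itemize}
\item When $q=p^f$ with $f>1$, I will use a field automorphism $\phi$ of order $p$ or $f$. I choose an action, typically on $G/H$ with $H$ a subfield subgroup or a suitable intransitive union, on which $G\langle\phi\rangle$ acts with the same orbits on $\Omega^3$ as $G$. A natural candidate is to take $H$ so small that triples are determined up to $G$ by invariants that $\phi$ preserves. Alternatively, I will use a disjoint union of two actions that are swapped or twisted by a graph or field automorphism.
\item For $n\ge 4$, and for $n=3$ with $q\equiv1\pmod 3$, I will look for an overgroup $G<X\le\Sym(\Omega)$ in which $G$ is normal or has small index and $X$ has the same $3$-orbits. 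For $n=3$, $q\equiv 1\pmod 3$, the natural candidate is $\PGL_3(q)$: the diagonal automorphism of order $3$, acting on $G/H$ for $H$ the stabilizer of a suitable configuration, should fix all $G$-orbits on triples. For $n\ge4$ the candidates are the inverse-transpose graph automorphism and the diagonal automorphisms, acting on cosets of a small subgroup such as a Sylow subgroup or a torus normalizer. A counting argument $|G\text{-orbits on }\Omega^3|=|X\text{-orbits on }\Omega^3|$ via Burnside's lemma will verify equality of orbits.
\item The small cases $\PSL_2(4),\PSL_2(5)$ and $\PSL_3(2)\cong\PSL_2(7)$ I would check computationally in \GAP{}. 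For $q$ prime, I expect to prove $\PSL_3(2)$ is closed via $\PSL_2(7)$, and likewise to handle $q=3$.
\end{itemize}

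For the positive results ($\PSL_2(p)$ with $p\ge7$, $\PSL_3(p)$ with $p\equiv2\pmod 3$, and $\PSL_3(3)$), the absence of field and diagonal automorphisms is the key input: $\Aut(G)$ is $\PGL_2(p)$ or $\PSL_3(p)\rtimes\langle\tau\rangle$. I would argue as follows.
\begin{enumerate}
\item Reduce to transitive actions on $G/H$ for each subgroup $H$, together with a gluing lemma: if each orbit is $3$-closed and $G$ acts faithfully on an orbit, then the $3$-closure of the union projects into $G$. Then show that the diagonal coupling is recovered from triples meeting two orbits.
\item For transitive $G/H$, show the $3$-closure $Y$ normalizes $G$. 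The plan is to use the point stabilizer $Y_\alpha$, which preserves $H$-orbits on pairs, and apply base size: if $G$ has a base of size $2$ on $G/H$, then $Y_{\alpha\beta}$ preserves every $G_{\alpha\beta}$-orbit. That forces $|Y|\le|G|\cdot(\text{small})$, and via known overgroup results (O'Nan--Scott or Liebeck--Praeger--Saxl) $Y\le N_{\Sym(\Omega)}(G)$.
\item Show that no outer automorphism in $\Aut(G)$ preserves all $3$-orbits. For $\PGL_2(p)$ versus $\PSL_2(p)$ this should follow because an outer element swaps two $G$-classes of some element, detectable on a triple. For the graph automorphism of $\PSL_3(p)$, it swaps point and line stabilizers, so it can only act when $H$ is $\tau$-stable; there I would find a triple orbit that $\tau$ moves.
\end{enumerate}

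The main obstacle is step 2 for subgroups $H$ with base size greater than $2$, where the $3$-closure might be a large non-normalizing overgroup, as in the $2$-transitive actions on the projective line or plane. There I would use the explicit known $3$-closures: the $3$-closure of $\PSL_2(p)$ on the projective line is $\PGammaL$-like and equals $\PGL_2(p)\cap$ (orbit-preserving), which is $\PSL_2(p)$ itself because cross-ratio-type triple invariants split under the square classes when $p$ is prime. I would also rely on \GAP{} checks for the finitely many small exceptions.
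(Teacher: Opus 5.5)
Your negative direction has a genuine gap at $n\ge5$, $q=p>2$ prime with $\gcd(n,p-1)=1$ (e.g.\ $\PSL_5(3)$, $\PSL_5(7)$). Here $\PGL_n(p)=\PSL_n(p)$ and there are no field automorphisms, so $\operatorname{Out}(G)$ is generated by the inverse-transpose map. The stabilizers you propose to pair it with are the wrong kind. A Sylow $p$-subgroup $U$ meets its opposite $U^{w_0}$ trivially, so $G/U$ has base size two and is automatically $3$-closed, and small stabilizers generally tend to behave this way. Your heuristic of taking $H$ small is backwards, since coarse triple invariants come from large stabilizers.

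The missing idea is that the extra permutation need not induce an outer automorphism at all: it can lie in the centralizer $C_{\Sym(\Omega)}(G)\cong N_G(H)/H$. The paper takes $\Omega=(V\setminus\{0\})/Z$, on which scalars commute with $G$, and shows $\GL(V)/Z\le G^{(k),\Omega}$ for $k<n$. Given $A\in\GL(V)$ and $k$ vectors spanning $W\ne V$, rescale one basis vector outside $W$ to get $D$ fixing $W$ pointwise with $AD\in\SL(V)$; for $k=3$ this needs $n\ge4$ and $q>2$.

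For $\PSL_4(2)\cong A_8$ none of this is available. The graph automorphism is the right tool there, but on a \emph{maximal} stabilizer: $A_8$ on $2$-subsets, i.e.\ $H\cong S_6\cong\mathrm{Sp}_4(2)$. An odd permutation can be corrected by a transposition of two points outside the at most six involved. For proper prime powers your field-automorphism idea is fine, but the action that works is simply the natural one on $\PP^{n-1}(\FF_q)$, where the Frobenius preserves every $G$-orbit of triples.

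On the positive side, step 2 does not reach the cases that matter. Base size two already gives $G^{(3)}=G$ directly. Overgroup results such as Liebeck--Praeger--Saxl concern primitive groups, whereas the hard actions are infinite imprimitive families of base size at least three. Examples are the Borel-fiber actions $G/H_d$ of $\PSL_2(p)$ and the coset actions of subgroups of point and line parabolics of $\PSL_3(p)$, such as $(\FF_p^3\setminus\{0\})/C$.

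Step 3 is incomplete even granting $Y\le N_{\Sym(\Omega)}(G)$. The kernel of $N_{\Sym(\Omega)}(G)\to\Aut(G)$ is $C_{\Sym(\Omega)}(G)\cong N_G(H)/H$, which is nontrivial on these fiber actions, and excluding it needs its own argument. For $\PSL_3(p)$ the paper shows a uniform fiber scalar $u$ satisfies $u^3=1$, hence is trivial since $3\nmid p-1$. For $\PSL_2(p)$ it obtains $c^2=1$ and then applies a three-point argument.

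Your assertion that the closure on $\PP^1(\FF_p)$ lies in $\PGL_2(p)$ is itself the hard point. The paper gets it from Carlitz's theorem: a permutation $f$ of $\FF_p$ with $(f(a)-f(b))/(a-b)$ always a square is affine.

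Finally, the ``diagonal coupling'' of two constituents, neither of which has base size two, is left without a mechanism. The paper needs projective-quotient synchronization for parabolic constituents, non-factorization $G\neq HK$, and explicit two-point stabilizers for $p=7,11,19$.
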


\begin{remark}
    Thus, within the family $\PSL_n(q)$, the classification of total $3$-closedness remains open only when $n\geq 5$ and $q=2$. Finite nonabelian simple groups of other Lie types are not considered in this paper.
\end{remark}

Now we describe our approach. Simplicity gives an effective reduction from arbitrary faithful actions to coset actions. Every nonsingleton orbit of a nonabelian simple group is faithful, and we show that total $3$-closedness can be tested on diagonal actions on unions $G/H\sqcup G/K$ for pairs of proper subgroups $H,K<G$. This two-orbit criterion is the starting point for all of our positive results.

Most coset actions are handled by base size. An action with a base of size at most two is $3$-closed, and such an action also governs its union with any other transitive $3$-closed action. We therefore reduce the positive cases to the relatively small collection of subgroups whose coset actions do not have a base of size two.

For $\PSL_2(p)$, Dickson's subgroup classification reduces the problem to torus normalizers, exceptional subgroups, and subgroups of a Borel subgroup. The first two types are handled by base-size estimates. The Borel subgroups give scalar-fiber actions above $\PP^1(\FF_p)$, whose $3$-closures are determined by the projective quotient and by determinant classes on the fibers. The remaining small primes are treated using the Fano plane, the unique $2$-$(11,5,2)$ biplane, Paley graphs and tournaments, and the Perkel graph.

The proof for $\PSL_3(p)$ has a similar structure. Nonparabolic maximal subgroups are treated using the subgroup classification and explicit trivial-intersection conjugates. Subgroups of point and line parabolics require a separate argument. Their coset actions admit projective quotients, and mixed triple orbits recover equality or incidence in $\operatorname{PG}(2,p)$. This forces the actions on different projective fibers to arise from one element of the group. The case $\PSL_3(3)$ is handled separately using its maximal-subgroup structure and one exhaustive \GAP{} computation.

The negative results come from local matching. On projective points, $\PGL_n(q)$ and $\PSL_n(q)$ have the same orbits on ordered triples, and field automorphisms preserve these orbits. This gives the semilinear obstruction whenever $q$ is a proper prime power or $\gcd(n,q-1)>1$. For $n\geq4$ and $q>2$, we use the action on nonzero vectors modulo the center of $\SL_n(q)$. Every element of the corresponding general linear quotient can be matched on any ordered $k$-tuple with $k<n$ by an element of the special linear quotient. Finally, the exceptional isomorphism $\PSL_4(2)\cong A_8$ gives the remaining negative case in dimension four.

The paper is organized as follows. In Section~\ref{sec:prelim}, we establish the general reduction and base-size lemmas. In Section~\ref{sec:psl2}, we treat $\PSL_2(q)$. In Section~\ref{sec:psl3}, we prove the classification for $\PSL_3(q)$. In Section~\ref{sec:higher}, we establish the semilinear and central-vector obstructions and prove Theorem~\ref{main}.

\section{Definitions and immediate results}\label{sec:prelim}
We first record the general facts used in the positive classifications. Throughout the paper, all groups and all sets acted on are finite, actions are on the right, and $\alpha^g$ denotes the image of a point $\alpha$ under a group element $g$. All coset actions are denoted by $G/H$. Only the associated transitive $G$-set and its point stabilizer are used.

\begin{definition}
Let $\Omega$ be a finite set and let $G\leq \Sym(\Omega)$. The \emph{$3$-closure} of $G$ on $\Omega$, denoted by $G^{(3),\Omega}$, consists of all $\sigma\in \Sym(\Omega)$ such that, for every $(\alpha_1,\alpha_2,\alpha_3)\in \Omega^3$, there exists $g\in G$, depending on the triple, with
\[(\alpha_1^\sigma,\alpha_2^\sigma,\alpha_3^\sigma)=(\alpha_1^g,\alpha_2^g,\alpha_3^g).\]
The action of $G$ on $\Omega$ is \emph{$3$-closed} if $G^{(3),\Omega}=G$. A finite group $G$ is \emph{totally $3$-closed} if every faithful action of $G$ on a finite set is $3$-closed.
\end{definition}

\begin{proposition}\label{prop:two-orbit}
Let $S$ be a finite nonabelian simple group. Then $S$ is totally $3$-closed if and only if, for every pair of proper subgroups $H,K<S$, with repetition allowed, the diagonal action of $S$ on $S/H\sqcup S/K$ is $3$-closed.
\end{proposition}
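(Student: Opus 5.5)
The forward direction is immediate. If $H,K<S$ are proper, then $S$ has no fixed points on $S/H\sqcup S/K$. Since $S$ is simple, nonabelian, and acts nontrivially on each orbit, the kernel on each orbit is trivial. So the action is faithful, and total $3$-closedness gives that it is $3$-closed. All the work is in the converse, where I reduce an arbitrary faithful action to the two-orbit case.

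For the converse, let $S$ act faithfully on a finite set $\Omega$ and let $\sigma\in S^{(3),\Omega}$. First I discard fixed points. Write $\Omega=\Omega_0\sqcup\Omega_1$, where $\Omega_0$ is the set of $S$-fixed points. Because $\sigma$ preserves the $S$-orbits on $\Omega$ (apply the definition to constant triples), $\sigma$ fixes $\Omega_0$ pointwise, and restriction gives $\sigma|_{\Omega_1}\in S^{(3),\Omega_1}$. Each nontrivial orbit is $S$-isomorphic to some $S/H$ with $H<S$ proper, and $S$ acts faithfully on it. Note that $\Omega_1\ne\emptyset$, since the action is faithful.

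Fix one orbit $\Delta_1\subseteq\Omega_1$. Applying the hypothesis with $H=K$ to $\Delta_1$ alone would require a one-orbit statement. I avoid this by taking any two orbits $\Delta_1,\Delta_i$, including the case $\Delta_i=\Delta_1$ if there is only one orbit. This uses the "repetition allowed" clause: $S/H\sqcup S/H$ restricts to $S/H$, and a closure element on $\Delta$ extends diagonally to $\Delta\sqcup\Delta$. Restriction of $3$-closure elements to $\sigma$-invariant unions of orbits stays in the $3$-closure, so $\sigma|_{\Delta_1\sqcup\Delta_i}\in S^{(3)}=S$ on that union. Hence there is $g_i\in S$ agreeing with $\sigma$ on $\Delta_1\sqcup\Delta_i$. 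Since $S$ acts faithfully on $\Delta_1$, the element $g_i$ is uniquely determined by $\sigma|_{\Delta_1}$, so all the $g_i$ equal a single $g$. Then $g$ agrees with $\sigma$ on every orbit, hence on all of $\Omega$, and $\sigma\in S$.

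The main point needing care is this gluing step. It uses faithfulness on a single nontrivial orbit, which is exactly where simplicity enters, to force the group elements obtained from different pairs of orbits to coincide. The other point to handle is the single-orbit case, which I treat via the duplicated union $S/H\sqcup S/H$ as described.
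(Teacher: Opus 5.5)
Your proof is correct and follows essentially the same route as the paper. Constant triples force $\sigma$ to preserve orbits and to fix the singleton orbits. On each pair $\Delta_1\sqcup\Delta_i$ the restriction of $\sigma$ is induced by an element of $S$, and these elements coincide because $S$ acts faithfully on $\Delta_1$. The single-orbit case is handled, as in the paper, by extending $\sigma$ diagonally to $\Delta\sqcup\Delta\cong S/H\sqcup S/H$. The paper spells out, and you leave implicit, the one-line check that this extension lies in the $3$-closure of the doubled set: the element $s\in S$ that matches $\sigma$ on the first coordinates of a triple also preserves the copy labels.
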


\begin{proof}
The forward implication is immediate. Indeed, for every proper subgroup $H<S$, the kernel of the action of $S$ on $S/H$ is the core of $H$ in $S$. Since $S$ is simple, this kernel is trivial. Thus every action appearing in the statement is faithful.

Conversely, let $\Omega$ be a faithful finite $S$-set and let $\sigma\in S^{(3),\Omega}$. By \cite[Lemma~2.1]{FreedmanGiudiciPraeger2024}, applying the definition to the constant triple $(\alpha,\alpha,\alpha)$ shows that $\alpha^\sigma\in\alpha^S$ for every $\alpha\in\Omega$. Hence $\sigma$ preserves every $S$-orbit.

Every nonsingleton orbit is faithful, since the kernel of the action on such an orbit is a proper normal subgroup of $S$. Let $\Omega_1,\ldots,\Omega_r$ be the nonsingleton orbits. Since the action on $\Omega$ is faithful, we have $r\geq 1$.

Suppose first that $r\geq 2$. For each $j\geq 2$, we have that $\sigma|_{\Omega_1\sqcup\Omega_j}\in S^{(3),\Omega_1\sqcup\Omega_j}$ by \cite[Lemma~2.2]{FreedmanGiudiciPraeger2024}. Since $\Omega_1\cong S/H_1$ and $\Omega_j\cong S/H_j$ for proper subgroups $H_1,H_j<S$, the hypothesis implies that this restriction is induced by some $s_j\in S$. For $j,k\geq 2$, the elements $s_j$ and $s_k$ induce the same permutation on $\Omega_1$. Since the action on $\Omega_1$ is faithful, we have $s_j=s_k$. Their common value induces $\sigma$ on every nonsingleton orbit.

Suppose now that $r=1$. By \cite[Lemma~2.2]{FreedmanGiudiciPraeger2024}, $\sigma|_{\Omega_1}\in S^{(3),\Omega_1}$. Consider the diagonal action of $S$ on $\Omega_1\times\{1,2\}$, where $S$ acts trivially on $\{1,2\}$. Define $\widehat{\sigma}\in\Sym(\Omega_1\times\{1,2\})$ by $(\alpha,i)^{\widehat{\sigma}}=(\alpha^\sigma,i)$. For any triple $((\alpha_1,i_1),(\alpha_2,i_2),(\alpha_3,i_3))$, there exists
$s\in S$ such that
\[(\alpha_1^\sigma,\alpha_2^\sigma,\alpha_3^\sigma)=(\alpha_1^s,\alpha_2^s,\alpha_3^s)\]
by \cite[Lemma~2.1]{FreedmanGiudiciPraeger2024}. Since $S$ fixes the second coordinate, the same element $s$ agrees with $\widehat{\sigma}$ on the original triple. Hence $\widehat{\sigma}\in S^{(3),\Omega_1\times\{1,2\}}$. Writing $\Omega_1\cong S/H$ for some proper subgroup $H<S$, this action is equivalent to the action on $S/H\sqcup S/H$. By hypothesis, it is $3$-closed. Thus $\widehat{\sigma}$ is induced by an element of $S$, and the same element induces $\sigma$ on $\Omega_1$.

Finally, since $\sigma$ preserves every $S$-orbit, it fixes every singleton orbit pointwise. Thus $\sigma\in S$, and $S$ is totally $3$-closed.
\end{proof}

A \emph{base} for a faithful action of $G$ on $\Omega$ is a tuple of points of $\Omega$ whose pointwise stabilizer in $G$ is trivial. The \emph{base size} of the action is the least length of a base.

\begin{lemma}\label{lem:base-size}
Let $G$ act faithfully on $\Omega$. If the action has base size at most two, then it is $3$-closed.
\end{lemma}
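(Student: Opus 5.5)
The proof is a direct argument from the definition of the $3$-closure. Let $\sigma\in G^{(3),\Omega}$; we want to show that $\sigma$ is induced by some element of $G$. First, if $\Omega$ has a base $(\beta)$ of length one, or if $G$ is trivial, pad it to a pair $(\beta,\beta)$ or take $\beta$ arbitrary, so that we may assume there is a base $(\beta,\gamma)$ of length exactly two. Since the $3$-closure is contained in the $2$-closure, and both are defined through tuples that may repeat entries, we can work with triples of the form $(\beta,\gamma,\alpha)$ with $\alpha\in\Omega$ arbitrary.

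The key step uses the definition on the pair $(\beta,\gamma)$ itself, via the triple $(\beta,\gamma,\gamma)$. This gives some $g\in G$ with $\beta^\sigma=\beta^g$ and $\gamma^\sigma=\gamma^g$. Replacing $\sigma$ by $\tau=\sigma g^{-1}$, which still lies in $G^{(3),\Omega}$ because this is a group containing $G$, we may assume $\tau$ fixes both $\beta$ and $\gamma$.

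Now take any $\alpha\in\Omega$ and apply the definition to the triple $(\beta,\gamma,\alpha)$. There is an $h\in G$, depending on $\alpha$, with $\beta^h=\beta$, $\gamma^h=\gamma$ and $\alpha^h=\alpha^\tau$. The first two conditions put $h$ in the pointwise stabilizer $G_{\beta,\gamma}$, which is trivial because $(\beta,\gamma)$ is a base. So $h=1$ and $\alpha^\tau=\alpha$. Since $\alpha$ was arbitrary, $\tau=1$, hence $\sigma=g\in G$, and $G^{(3),\Omega}=G$.

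There is no real obstacle. The only point needing care is the normalization in the second step: it relies on $G^{(3),\Omega}$ being a subgroup of $\Sym(\Omega)$ containing $G$, which follows directly from the definition, and on the definition allowing repeated entries in triples, which it does since it quantifies over all of $\Omega^3$.
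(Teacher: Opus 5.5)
Your argument is correct and complete. The paper does not argue directly. It cites \cite[Lemma~3.2]{FreedmanGiudiciPraeger2024}, which gives $G^{(3),\Omega}=G$ when the base size is two and $G^{(2),\Omega}=G$ when it is one. In the second case it concludes from $G\leq G^{(3),\Omega}\leq G^{(2),\Omega}$.

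You instead give the self-contained normalization argument: match $\sigma$ on the base using the triple $(\beta,\gamma,\gamma)$, then use triples $(\beta,\gamma,\alpha)$ and the triviality of $G_{\beta,\gamma}$. This is essentially the proof of the cited lemma, and it is the same device the paper itself uses in Proposition~\ref{prop:base-two}. Padding a one-point (or empty) base to $(\beta,\beta)$ removes the need for a separate case and for any comparison with the $2$-closure.

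The only superfluous sentence is your remark that the $3$-closure lies in the $2$-closure, which your proof never uses.
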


\begin{proof}
Let $b$ be the base size of the action. If $b=2$, then $G^{(3),\Omega}=G$ by \cite[Lemma~3.2]{FreedmanGiudiciPraeger2024}.

If $b=1$, then $G^{(2),\Omega}=G$ by \cite[Lemma~3.2]{FreedmanGiudiciPraeger2024}. Since $G\leq G^{(3),\Omega}\leq G^{(2),\Omega}$, it follows that $G^{(3),\Omega}=G$.
\end{proof}

\begin{proposition}\label{prop:base-two}
Let $S$ be a finite group, and let $X$ and $Y$ be faithful transitive $S$-sets whose actions are $3$-closed. Suppose that the action on $X$ has base size at most two. Then the diagonal action of $S$ on $X\sqcup Y$ is $3$-closed.
\end{proposition}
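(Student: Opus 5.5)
Let $\sigma\in S^{(3),X\sqcup Y}$. The plan is to produce a single element of $S$ that agrees with $\sigma$ on all of $X\sqcup Y$, combining the restriction argument from the proof of Proposition~\ref{prop:two-orbit} with the rigidity provided by a base of $X$.

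\textbf{Step 1: restrict to each orbit.} Applying the definition to constant triples shows that $\sigma$ preserves each $S$-orbit, so $\sigma$ maps $X$ to $X$ and $Y$ to $Y$. By \cite[Lemma~2.2]{FreedmanGiudiciPraeger2024}, the restrictions satisfy $\sigma|_X\in S^{(3),X}$ and $\sigma|_Y\in S^{(3),Y}$. Both actions are $3$-closed by hypothesis, so there are elements $g,h\in S$ with $\sigma|_X=g|_X$ and $\sigma|_Y=h|_Y$. It remains to show that $g$ and $h$ agree on $Y$; then $g$ induces $\sigma$ on $X\sqcup Y$.

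\textbf{Step 2: use a base of $X$ together with one point of $Y$.} Fix a base $(x_1,x_2)$ of $X$. If the base size is one, take $x_1=x_2$. Now let $y\in Y$ be arbitrary and apply the definition of $S^{(3),X\sqcup Y}$ to the triple $(x_1,x_2,y)$. This gives some $s\in S$ with
\[
x_1^s=x_1^\sigma=x_1^g,\qquad x_2^s=x_2^g,\qquad y^s=y^\sigma .
\]
The first two equalities say that $sg^{-1}$ fixes $x_1$ and $x_2$. Since $(x_1,x_2)$ is a base for the faithful action on $X$, this forces $s=g$. Therefore $y^g=y^\sigma$. As $y\in Y$ was arbitrary, $g$ agrees with $\sigma$ on $Y$, and we conclude $\sigma=g\in S$.

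\textbf{Obstacle and simplification.} The only delicate point is the degenerate case of base size one, which is handled by repeating the base point inside the triple, as above. Step 2 actually shows more than we need: the hypothesis that $Y$ is $3$-closed is not used, nor is transitivity. The same argument shows that $\sigma|_X$ being induced by $g$ already forces $g$ to induce $\sigma$ everywhere. I would still keep Step 1 as stated, since it only invokes the hypotheses to obtain $g$.
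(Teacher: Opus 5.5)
Your proof is correct and follows essentially the same route as the paper: you restrict $\sigma$ to $X$ and use $3$-closedness to get an element inducing it there (the paper normalizes $\sigma$ to fix $X$ pointwise, while you keep $g$ explicitly), then apply the definition to the triple $(x_1,x_2,y)$ with a base $(x_1,x_2)$ to force agreement on $Y$. Your remark that the $3$-closedness of $Y$ is never used holds for the paper's argument as well.
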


\begin{proof}
Let $\sigma\in S^{(3),X\sqcup Y}$. Constant triples show that $\sigma$ preserves both $X$ and $Y$ by \cite[Lemma~2.1]{FreedmanGiudiciPraeger2024}. Moreover, $\sigma|_X\in S^{(3),X}$ and $\sigma|_Y\in S^{(3),Y}$ by \cite[Lemma~2.2]{FreedmanGiudiciPraeger2024}. Since both actions are $3$-closed, the restrictions of $\sigma$ to $X$ and $Y$ are induced by elements of $S$.

Choose a base $(a,b)$ for the action on $X$, allowing $a=b$ when the action has a one-point base. After composing $\sigma$ with an element of $S$, we may assume that $\sigma$ fixes $X$ pointwise.

Let $y\in Y$. By \cite[Lemma~2.1]{FreedmanGiudiciPraeger2024}, there exists $s\in S$ such that
\[(a^\sigma,b^\sigma,y^\sigma)=(a^s,b^s,y^s).\]
Since $\sigma$ fixes $X$ pointwise, we have $s\in S_{a,b}=1$. Hence $y^\sigma=y$. Thus $\sigma$ fixes both $X$ and $Y$ pointwise after normalization, and the original permutation $\sigma$ is induced by an element of $S$. Therefore the action on $X\sqcup Y$ is $3$-closed.
\end{proof}

\begin{proposition}\label{prop:nonfactor}
Let $S$ be a finite nonabelian simple group, and let $H,K<S$. Suppose that the coset actions on $S/H$ and $S/K$ are $3$-closed. If $S\neq HK$, then the diagonal action of $S$ on $S/H\sqcup S/K$ is $3$-closed.
\end{proposition}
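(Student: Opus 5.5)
Let $\sigma\in S^{(3),S/H\sqcup S/K}$. As in the proof of Proposition~\ref{prop:base-two}, constant triples show that $\sigma$ preserves $X=S/H$ and $Y=S/K$. The restrictions $\sigma|_X$ and $\sigma|_Y$ lie in the respective $3$-closures, and since both coset actions are $3$-closed they are induced by elements $s,t\in S$. Composing $\sigma$ with $s^{-1}$, we may assume that $\sigma$ fixes $X$ pointwise and acts on $Y$ as some $t\in S$. The goal is to show that $t$ acts trivially on $Y$. Since $Y$ is faithful (the core of $K$ is trivial because $S$ is simple and $K$ proper), this is the same as $t=1$.

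The key step uses the mixed pairs, that is, the $2$-point consequence of the $3$-closure condition. For $x\in X$ and $y\in Y$, the triple $(x,y,y)$ gives $g\in S$ with $x^g=x$ and $y^g=y^t$. Taking $x=H$ and $y=Kb$, we need $g\in H$ with $Kbg=Kbt$, i.e.\ $t\in b^{-1}KbH$ for all $b\in S$. Every $x\in X$ is fixed by $\sigma$, so replacing $H$ by its conjugates gives the full condition
\[
t\in b^{-1}Kb\,a^{-1}Ha \quad\text{for all } a,b\in S,
\]
which is equivalent to $t\in a\,b^{-1}Kb\,a^{-1}H$ after adjusting. Equivalently, $t$ preserves every $S$-orbit on $X\times Y$, i.e.\ every $(H,K)$-double coset suborbit. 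The orbits of $S$ on $X\times Y$ correspond to double cosets $HcK$, and $S\neq HK$ means there are at least two such orbits. The condition says that $t$ maps each orbital relation $R_c=\{(Ha,Kca):a\in S\}$ to itself, while fixing $X$ pointwise.

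I would then consider the set $N=\{u\in S: u \text{ acts on } Y \text{ preserving every fibre } R_c(x)\text{ for every } x\in X\}$, where $R_c(x)=\{y:(x,y)\in R_c\}$. This set is a subgroup, and it is normalized by $S$ because the family of relations is $S$-invariant and $X$ is fixed as a whole. Hence $N$ is normal in $S$, so $N=1$ or $N=S$. The element $t$ lies in $N$, since $t$ fixes $x$ (via $\sigma$) and sends $R_c(x)$ to $R_c(x^\sigma)=R_c(x)$.

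The main obstacle is to rule out $N=S$. If $N=S$, then every $u\in S$ preserves each $R_c(H)$. For $c=1$ this set is $\{Kh:h\in H\}$, which would therefore be $S$-invariant, hence all of $Y$ by transitivity. That forces $HK=S$, contradicting the hypothesis. So $N=1$, which gives $t=1$ and therefore $\sigma$ is induced by $s$. The care needed is in verifying that $N$ is normal: conjugating by $g\in S$ sends the fibre $R_c(x)$ to $R_c(x^g)$, so invariance over all $x$ is conjugation-stable.
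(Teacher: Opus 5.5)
Your proof is correct, and it takes a more self-contained route than the paper. The paper only checks that $S\neq HK$ is equivalent to $H$ being intransitive on $S/K$ (and $K$ on $S/H$). It then cites \cite[Lemma~2.3]{FreedmanGiudiciPraeger2024} for the conclusion.

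You instead prove that conclusion directly:
\begin{enumerate}
\item[(i)] After normalizing so that $\sigma$ is trivial on $X$ and equals some $t\in S$ on $Y$, the mixed triples $(x,y,y)$ force $t$ to fix every $S_x$-orbit on $Y$ setwise, for every $x\in X$.
\item[(ii)] The set $N$ of elements of $S$ with this property is the kernel of the action of $S$ on the $S$-invariant family of these orbits. Hence $N$ is normal.
\item[(iii)] If $N=S$, then $H$ would be transitive on $S/K$, which means $S=KH=HK$.
\end{enumerate}

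What your route buys is transparency. It shows exactly where simplicity enters (to get $N\in\{1,S\}$) and where $S\neq HK$ enters (to exclude $N=S$). It also uses only pair-orbits on $X\times Y$, so the same argument gives the analogous statement for $k$-closure for every $k\geq 2$ whenever both constituents are $k$-closed. The paper's route is shorter but rests entirely on the external lemma.

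One small inaccuracy: your rewriting of $t\in b^{-1}Kb\,a^{-1}Ha$ (for all $a,b$) as ``$t\in a\,b^{-1}Kb\,a^{-1}H$ after adjusting'' is not equivalent. The correct rewriting is $ata^{-1}\in b^{-1}KbH$ for all $a,b\in S$. Nothing later depends on this, since the rest of your argument uses the orbit formulation.
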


\begin{proof}
Set $X=S/H$ and $Y=S/K$. Since $H$ and $K$ are proper and $S$ is simple, both coset actions are faithful. By hypothesis, both are $3$-closed.

Let $x\in X$ and $y\in Y$ have stabilizers $S_x=H$ and $S_y=K$. Since $H$ and $K$ are proper, $H$ is not transitive on $X$ and $K$ is not transitive on $Y$. Moreover, $H$ is transitive on $Y$ if and only if $S=HK$, and $K$ is transitive on $X$ if and only if $S=KH$. Since $S=HK$ if and only if $S=KH$, the hypothesis implies that neither of these actions is transitive.

Thus the hypotheses of \cite[Lemma~2.3]{FreedmanGiudiciPraeger2024} are satisfied, and the diagonal action on $X\sqcup Y$ is $3$-closed.
\end{proof}

\section{The $\PSL_2(q)$ classification}\label{sec:psl2}
In this section, we prove that $G=\PSL_2(q)$ is totally $3$-closed if and only if $q\geq 7$ is prime.

\subsection{Prime powers and large primes}
We first eliminate the proper prime powers.

\begin{proposition}\label{prop:extension-field}
Let $q=p^f\geq 4$ with $f>1$. Then the natural action of $G=\PSL_2(q)$ on $\Omega=\PP^1(\FF_q)$ is not $3$-closed.
\end{proposition}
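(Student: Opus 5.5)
We exhibit an element of $\Sym(\Omega)$ that lies in $G^{(3),\Omega}$ but not in $G$: the Frobenius map $\phi\colon x\mapsto x^p$ on $\PP^1(\FF_q)$, fixing $\infty$. The key fact is that $\PGL_2(q)$ is sharply $3$-transitive on $\PP^1(\FF_q)$. Hence its orbits on $\Omega^3$ are determined by the equality pattern among the three coordinates: all distinct, exactly two equal (three patterns), or all equal. Since $\phi$ is a bijection, it preserves equality patterns, so $\phi\in\PGL_2(q)^{(3),\Omega}$.

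\textbf{Step 1: $G$ and $\PGL_2(q)$ have the same orbits on $\Omega^3$.} Because $G\trianglelefteq\PGL_2(q)$, it suffices to show $G$ is transitive on each equality-pattern class.
\begin{itemize}
\item For $q$ even, $G=\PGL_2(q)$ and there is nothing to prove.
\item For $q$ odd, $G$ is $2$-transitive, since it is transitive with point stabilizer the Borel subgroup, which is transitive on the remaining $q$ points. This handles the triples with a repeated coordinate.
\item For distinct triples, it suffices that the stabilizer in $G$ of $0$ and $\infty$ acts transitively on $\FF_q^\times$. That stabilizer consists of the maps $x\mapsto a^2x$, so it acts on $\FF_q^\times$ with orbits the square classes. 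This gives only two orbits on distinct triples, not one.
\end{itemize}
So for $q$ odd the argument needs refinement: $G$ has two orbits on ordered distinct triples, namely the triples $(\infty,0,c)$ with $c$ a square and those with $c$ a nonsquare, transported by $G$. By contrast, for $q$ even the argument goes through directly.

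\textbf{Step 2: $\phi$ preserves the two $G$-orbits when $q$ is odd.} Since $\phi$ normalizes $G$ (it acts by applying Frobenius to matrix entries, preserving determinants), it permutes the $G$-orbits on $\Omega^3$. Now $\phi$ sends $(\infty,0,c)$ to $(\infty,0,c^p)$, and $c^p$ is a square if and only if $c$ is, because $p$ is odd. Hence $\phi$ fixes both orbits of distinct triples, and it fixes the repeated-coordinate orbits by $2$-transitivity. Thus every triple $(\alpha_1,\alpha_2,\alpha_3)$ satisfies $(\alpha_i^\phi)=(\alpha_i^g)$ for some $g\in G$, so $\phi\in G^{(3),\Omega}$.

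\textbf{Step 3: $\phi\notin G$.} An element of $\PGL_2(q)$ fixing $\infty,0,1$ is trivial. The map $\phi$ fixes $\infty,0,1$, but $\phi\neq 1$ because $f>1$: some $x\in\FF_q\setminus\FF_p$ satisfies $x^p\neq x$. Therefore $\phi\notin\PGL_2(q)\supseteq G$, so $G^{(3),\Omega}\neq G$ and the natural action is not $3$-closed.

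The main obstacle is Step 2 in odd characteristic, that is, checking that the two $G$-orbits on distinct triples are exactly the square and nonsquare classes (via the cross-ratio relative to the frame $(\infty,0,1)$) and that Frobenius preserves them. This reduces to the fact that Frobenius maps squares to squares.
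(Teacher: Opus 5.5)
Your proof is correct and is essentially the paper's argument: the Frobenius permutation $\phi$ witnesses non-closure, it preserves the two square-class $G$-orbits on ordered distinct triples when $q$ is odd, and it fixes $\infty,0,1$ without being the identity, so it is not in $\PGL_2(q)$. The differences are cosmetic. For odd $q$ you check orbit preservation using the normal form $(\infty,0,c)$ and the fact that $\phi$ normalizes $G$, where the paper uses the invariant $\Delta(T)$ with $\Delta(T^\phi)=\Delta(T)^p$; for even $q$ you use $\phi$ directly, where the paper observes $G^{(3),\Omega}=\Sym(\Omega)$.
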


\begin{proof}
Suppose first that $q$ is odd. For an ordered triple $T=(x_1,x_2,x_3)$ of distinct points, choose nonzero representatives $v_i\in\FF_q^2$ and set
\[\Delta(T)=\det(v_1,v_2)\det(v_2,v_3)\det(v_3,v_1)\quad\bmod (\FF_q^\times)^2. \]
This is independent of the representatives, and for $A\in\GL_2(q)$,
\[[\Delta(AT)]=[\det(A)][\Delta(T)].\]
The group $\PGL_2(q)$ is sharply $3$-transitive on $\Omega$, while $G$ has index $2$ in $\PGL_2(q)$ and is $2$-transitive \cite[p.~245]{DixonMortimer}. Hence the square class of $\Delta(T)$ distinguishes the two $G$-orbits on ordered triples of distinct points. Moreover, any two triples with at most two distinct entries lie in the same $G$-orbit whenever the same coordinates are equal.

Consider the Frobenius permutation
\[\phi:[u:v]\longmapsto [u^p:v^p].\]
It preserves all equalities among the coordinates of a triple and satisfies $\Delta(T^\phi)=\Delta(T)^p$. Since $p$ is odd, $\phi$ preserves square classes. Thus $\phi\in G^{(3),\Omega}$. However, $\phi$ fixes $0$, $1$, and $\infty$, so any element of $\PGL_2(q)$ inducing $\phi$ would be the identity. Since $f>1$, $\phi$ is nontrivial, and hence $\phi\notin G$.

Suppose now that $q$ is even. Then $G=\PGL_2(q)$, and $G$ acts sharply $3$-transitively on $\Omega$ \cite[p.~245]{DixonMortimer}. Hence two triples lie in the same $G$-orbit if and only if the same coordinates are equal. The same holds for $\Sym(\Omega)$, so
\[G^{(3),\Omega}=\Sym(\Omega)>G.\]
Therefore the natural action is not $3$-closed.
\end{proof}

Next, we eliminate a small case. 

\begin{proposition}\label{prop:q5}
The group $\PSL_2(5)$ is not totally $3$-closed.
\end{proposition}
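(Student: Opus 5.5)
We will exhibit a single faithful transitive action of $\PSL_2(5)$ that is not $3$-closed. We use the isomorphism $\PSL_2(5)\cong A_5$, and we take the natural action of $A_5$ on $\Omega=\{1,\dots,5\}$. The stabilizer $A_4$ is a proper subgroup, so this action is faithful.

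The key step is to show that $A_5$ is $3$-transitive on $\Omega$. In fact $A_n$ is $(n-2)$-transitive on $n$ points, and for $n=5$ this gives $3$-transitivity. We verify it directly. Let $(a_1,a_2,a_3)$ and $(b_1,b_2,b_3)$ be ordered triples of distinct points. Some $\pi\in S_5$ maps the first triple to the second. If $\pi$ is odd, we compose it with the transposition of the two points outside $\{b_1,b_2,b_3\}$. The result is an even permutation with the same effect on the triple.

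Next we describe the orbits of $A_5$ on $\Omega^3$. By the previous step, two triples lie in the same orbit if and only if they have the same equality pattern among their coordinates. For triples with repeated entries, this uses $2$-transitivity, which follows from $3$-transitivity. This description of the orbits is exactly the same for $\Sym(\Omega)=S_5$. Hence $A_5^{(3),\Omega}=S_5\neq A_5$, and the action is not $3$-closed.

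The same conclusion comes from the paper's framework. The action on $\Omega$ is equivalent to the action of $\PSL_2(5)$ on $\PSL_2(5)/H$ with $H\cong A_4$. Taking $H=K$ in Proposition~\ref{prop:two-orbit}, the corresponding action on $S/H\sqcup S/H$ fails to be $3$-closed, because a transposition acting diagonally on both copies preserves every orbit on triples. Either way, $\PSL_2(5)$ is not totally $3$-closed.

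No step is a serious obstacle. The only point needing care is the $3$-transitivity of $A_5$ on five points, and the adjustment by a transposition above settles it.
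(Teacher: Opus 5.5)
Your proposal is correct and follows the paper's route: the natural action of $A_5\cong\PSL_2(5)$ on five points is $3$-transitive, so $A_5$ and $S_5$ have the same orbits on ordered triples, and hence $A_5^{(3)}=S_5>A_5$. The differences are cosmetic: the paper cites $3$-transitivity and also notes, unnecessarily, that it is sharp, while you verify it directly by the transposition adjustment; your extra remark about $S/H\sqcup S/H$ is correct but not needed, since the faithful natural action already fails to be $3$-closed.
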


\begin{proof}
Using $\PSL_2(5)\cong A_5$, consider the natural action of $A_5$ on five points. This action is $3$-transitive \cite[Section~2.9]{DixonMortimer}. Since $|A_5|=5\cdot 4\cdot 3$, it is sharply $3$-transitive. Thus $A_5$ and $S_5$ have the same orbits on ordered triples, and hence
\[A_5^{(3)}=S_5>A_5.\]
Therefore $\PSL_2(5)$ is not totally $3$-closed.
\end{proof}

We next prove that $\PSL_2(p)$ is totally $3$-closed for every prime $p\geq 17$ with $p\neq 19$. Together with the preceding results, this leaves the four cases $p=7,11,13,19$ in the $\PSL_2$ family.

For the remainder of this subsection, let $p\geq 17$ be prime, put $G=\PSL_2(p)$ and $m=(p-1)/2$, and let $U\cong C_p$ be a Sylow $p$-subgroup of $G$. The Borel subgroup $N_G(U)$ has the form $U\rtimes C_m$, and for each $d\mid m$ we write $H_d=U\rtimes C_d$ for the unique subgroup of $N_G(U)$ containing $U$ with $|H_d/U|=d$.

\begin{lemma}\label{lem:psl2-subgroups}
Every proper subgroup of $G$ is contained in one of the following:
    \begin{enumerate}
        \item[\textup{(i)}] the normalizer of a split or nonsplit torus;
        \item[\textup{(ii)}] an exceptional subgroup isomorphic to $A_4$, $S_4$, or $A_5$;
        \item[\textup{(iii)}] a conjugate of $H_d$ for some $d\mid m$.
    \end{enumerate}
\end{lemma}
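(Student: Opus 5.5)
This follows from Dickson's classification of the subgroups of $\PSL_2(q)$, specialized to a prime field $q=p\geq 17$. Dickson's theorem \cite{DixonMortimer} (see also Huppert, Hauptsatz II.8.27) says that every subgroup of $\PSL_2(p)$, $p$ an odd prime, is one of the following:
\begin{enumerate}
\item[(a)] cyclic or dihedral of order dividing $p\pm1$ (more precisely, dividing $(p\pm1)$ or twice a divisor of $(p\pm1)/2$);
\item[(b)] a subgroup of a Borel subgroup $C_p\rtimes C_m$, that is, an elementary abelian (here cyclic) $p$-group extended by a cyclic group of order dividing $m$;
\item[(c)] isomorphic to $A_4$, $S_4$ or $A_5$;
\item[(d)] $\PSL_2(p)$ itself.
\end{enumerate}
Since $p$ is prime, there are no subfield subgroups $\PSL_2(p_0)$ or $\PGL_2(p_0)$ with $p_0<p$. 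So the only possible proper subgroups are of types (a)–(c), and the task is to place each inside one of the listed overgroups.

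\emph{Type (a).} A cyclic subgroup of order $d$ coprime to $p$ consists of semisimple elements. It is therefore contained in a maximal torus, split of order $(p-1)/2$ or nonsplit of order $(p+1)/2$. For a dihedral subgroup $D_{2d}$ with $d>2$, the cyclic part $C_d$ lies in a unique torus $T$, since distinct maximal tori intersect trivially (equivalently, the centralizer of a noncentral semisimple element is the torus). The involutions of $D_{2d}$ normalize $C_d$, hence normalize $T=C_G(C_d)$. So $D_{2d}\le N_G(T)$, which is the torus normalizer.

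The cases $d\le 2$, the Klein four-groups and cyclic groups of order $2$, are the only subtle ones. A Klein four-group $V$ lies in $N_G(T)$ where $T$ is the torus containing one of its involutions: that torus is the centralizer of the involution, which is dihedral of order $p\pm1$, and it contains $V$. The same argument gives $V\le N_G(C_G(t))$ for an involution $t\in V$. Alternatively, $V\le A_4$, which is type (ii).

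\emph{Type (b).} A subgroup $L$ whose order is divisible by $p$ contains a Sylow $p$-subgroup $U^g$, which is normal in $L$ by Dickson's description. Hence $L\le N_G(U^g)=(N_G(U))^g$. Since $N_G(U)/U\cong C_m$ is cyclic and $L\ge U^g$, we get $L=H_d^g$ with $d=|L/U^g|$ dividing $m$.

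A $p'$-subgroup of a Borel subgroup is cyclic of order dividing $m$, so it falls under type (a).

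\emph{Type (c).} These are already the exceptional subgroups in (ii). Note that when $A_4$, $S_4$ or $A_5$ does not occur, the statement is vacuous for that case.

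The main obstacle is only bookkeeping: handling the small dihedral and Klein four cases correctly, and confirming that the Borel-subgroup conclusion gives exactly a conjugate of $H_d$. The latter uses the uniqueness of the subgroup of the cyclic group $C_m$ of each order. I would cite Dickson's theorem in the form of Huppert II.8.27 rather than reprove it.
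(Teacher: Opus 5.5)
Your proof is correct and is essentially the paper's argument: apply Dickson's classification, observe that no subfield subgroups occur because $p$ is prime, and push subgroups of a Borel subgroup into some $H_d$. The only difference is that the paper quotes Dickson in the containment form \cite[Theorem~A.1]{GuralnickZieve}, so it does not need your hand placement of cyclic, dihedral and Klein four subgroups into torus normalizers. (A small slip there: the centralizer $C_G(t)\cong D_{p\pm1}$ of an involution $t$ is the torus normalizer, not the torus, but your conclusion $V\leq N_G(T)$ is right.) The paper also treats every $L\leq U\rtimes C_m$ uniformly via $L\leq LU=H_d$ with $d=|LU/U|$, rather than splitting according to whether $p$ divides $|L|$.
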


\begin{proof}
By Dickson's classification, every proper subgroup of $G$ is contained in a torus normalizer, a Borel subgroup, an exceptional subgroup, or a proper subfield subgroup \cite[Theorem~A.1]{GuralnickZieve}. See also \cite[Lemma~2.1]{Jones}. Since $p$ is prime, no proper subfield subgroup occurs.

A Borel subgroup has the form $U: C_m$. If $L\leq U: C_m$ and $|LU/U|=d$, then $d\mid m$ and
\[L\leq LU=U: C_d=H_d.\]
The result follows.
\end{proof}

\begin{lemma}
\label{lem:base-two-psl2}
Let $p\geq17$ be prime and let $G=\PSL_2(p)$. Every coset action $G/H$, where $H$ is contained in a torus normalizer or in a subgroup isomorphic to $A_4$, $S_4$, or $A_5$, has a base of size at most two, except possibly when $p=19$ and $H\cong A_5$.
\end{lemma}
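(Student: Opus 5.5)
The plan is to reduce to maximal-type overgroups and use a counting criterion. If $H\leq M$, then $M$-cosets are unions of $H$-cosets, so $H\cap H^g\leq M\cap M^g$; hence a base of size two for $G/M$, namely $M\cap M^g=1$ for some $g$, gives one for $G/H$. So it suffices to treat $M=N_G(T)$ with $T$ a split or nonsplit torus, and $M\cong A_4,S_4,A_5$, where such subgroups exist. To find the required $g$, I will use the standard counting criterion. If no $g$ satisfies $M\cap M^g=1$, then every element of $G$ lies in $\bigcup_{x\in M^\#}\{g: x^{g^{-1}}\in M\}$, a union over nonidentity $x\in M$. Restricting to elements $x$ of prime order, the number of $g\in G$ with $x^{g}\in M$ is $|C_G(x)|\cdot|x^G\cap M|$. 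Hence a base of size two exists as soon as
\[\sum_{x\in M,\ |x|\text{ prime}} |C_G(x)|\,|x^G\cap M| < |G| = \tfrac12 p(p^2-1).\]

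For the torus normalizers I will work with $M=D_{p\mp1}$ of order $p\mp1$, writing $|T|=(p\mp1)/2$. The prime-order elements of $M$ are of two kinds. The first kind are nontrivial elements of $T$: there are fewer than $(p+1)/2$ of them, their centralizers have order at most $p+1$ (they are contained in a torus normalizer, and equal to $T$ unless the element has order $2$), and each $G$-class meets $M$ in at most $2$ elements. The second kind are involutions outside $T$: there are at most $(p+1)/2$ of them, all conjugate in $G$, each with centralizer a dihedral group of order $p\pm1$. The contribution of the first kind is $O(p^2)$, and that of the second kind is at most about $\frac{p+1}{2}\cdot(p+1)\cdot\frac{p+3}{2}\approx p^3/4$. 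Both together are below $p^3/2$ for $p\geq 17$. I will check the constants explicitly at $p=17$, using the sharper fact that the involution class has size $p(p\pm1)/2$ and meets $M$ in exactly $|T|$ or $|T|+1$ elements.

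For the exceptional subgroups the bound is simpler, because $|M|\leq 60$ is fixed. Each prime-order element has $|C_G(x)|\leq p+1$, so the sum is at most $60\cdot 60\cdot(p+1)$, and in fact much less: for $A_5$ it is about $15\cdot 15\cdot(p+1)+20\cdot 20\cdot(p+1)+24\cdot24\cdot(p+1)$. This is below $p(p^2-1)/2$ once $p$ exceeds roughly $45$. The primes between $17$ and $45$ for which $A_4$, $S_4$ or $A_5$ actually occurs are finitely many, namely $p\equiv\pm1\pmod 8$ for $S_4$ and $p\equiv\pm1\pmod{10}$ for $A_5$. I will settle these either by a refined count, using exact class sizes $|x^G\cap M|$ and $|C_G(x)|=p\pm1$, or by a quick \GAP{} check.

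I expect the main obstacle to be precisely these small primes with $M\cong A_5$ or $S_4$, where the crude bound fails. The exemption of $p=19$ with $H\cong A_5$ in the statement signals that the estimate genuinely fails there, since $|A_5|^2=3600$ is comparable to $|G|=3420$, and so no base of size two can exist. For the remaining small cases I will do the exact computation; if it remains inconclusive, I will fall back on a direct computational verification.
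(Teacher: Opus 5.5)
Your reduction to overgroups $M$ and your count for the torus normalizers are sound. There you use a uniform count, whereas the paper uses the trivial intersection of the stabilizers of $\{a,b\}$ and $\{a,c\}$ in the split case and counts conjugates through involutions in the nonsplit case. One slip: the central involution of $T$, when it exists, meets $M$ in $|T|+1$ elements, not $2$, but this only changes an $O(p^2)$ term.

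The genuine gap is in the exceptional case. Your criterion sums $|C_G(x)|\,|x^G\cap M|$ over \emph{all} prime-order elements $x\in M$. But $x\in M^g$ holds exactly when $\langle x\rangle\leq M^g$, so each cyclic subgroup of order $r$ is counted $r-1$ times. For $M\cong A_5$ this loss is fatal at precisely the primes you planned to settle by a refined count.

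At $p=29$ one has $|G|=12180$. For $x$ of order $2,3,5$, the values are $|C_G(x)|=28,15,15$ and $|x^G\cap A_5|=15,20,12$. Your sum is then $6300+6000+4320=16620>|G|$. At $p=31$ it is $7200+6000+4320=17520>|G|=14880$. So exact class data do not rescue the element-wise bound. As written, the $A_5$ case at $p=29,31$ reduces to an unspecified computer check.

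The missing idea is to take the union bound over prime-order subgroups, or equivalently over one generator per subgroup, as the paper does:
\[\Pr(E\cap E^g\neq1)\leq\sum_{\mathcal C}n_{\mathcal C}^2/|\mathcal C|,\]
where each $G$-class $\mathcal C$ of prime-order subgroups has at least $p(p-1)/2$ members. The numerators are $25$, $97$, $361$ for $A_4$, $S_4$, $A_5$. So the bound is below $1$ for $A_4$ and $S_4$ when $p\geq17$, and for $A_5$ when $p\geq29$, with no case-by-case work. At $p=29$ the one-generator count is $6300+3000+1080=10380<12180$.

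You should also note that a proper subgroup of $A_5$ lies in $A_4$ or in a torus normalizer, since the maximal subgroups of $A_5$ are $A_4$, $D_{10}$ and $S_3$. Without this, your reduction to $M$ leaves the proper subgroups of $A_5$ at $p=19$ untreated. Your observation that $|A_5|^2>|G|$ at $p=19$ rules out a base of size two is correct, though not needed, since that case is exempted.
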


\begin{proof}
It suffices to find, for each subgroup $L$ listed above, some $g\in G$ such that $L\cap L^g=1$.

A split torus normalizer is the stabilizer of an unordered pair $\{a,b\}$ in $\PP^1(\FF_p)$. The stabilizers of $\{a,b\}$ and $\{a,c\}$, where $a,b,c$ are distinct, have trivial intersection.

Now let $N$ be a nonsplit torus normalizer. By \cite[Lemmas~A.2 and~A.3]{GuralnickZieve}, $|N|=p+1$ and $N$ has $p(p-1)/2$ conjugates. A noninvolution of $N$ determines its unique nonsplit torus, so two distinct conjugates of $N$ can intersect only in involutions. Counting conjugates through the involutions of $N$ gives at most
\[\frac{(p-1)^2}{4} \textrm{ when } p\equiv1\pmod 4\]
conjugates meeting $N$, and at most
\[\frac{p^2+4p+7}{4} \textrm{ when } p\equiv3\pmod 4. \]
Both numbers are less than $p(p-1)/2$ for $p>7$. Hence $N\cap N^g=1$ for some $g\in G$.

Finally, let $E\cong A_4,S_4$, or $A_5$. If $E\cap E^g\neq1$, then $E\cap E^g$ contains a subgroup of prime order. For a $G$-conjugacy class $\mathcal C$ of prime-order subgroups, let $n_{\mathcal C}$ be the number of its members contained in $E$. For $g$ chosen uniformly at random in $G$, a union bound gives
\[\Pr(E\cap E^g\neq1)\leq \sum_{\mathcal C}\frac{n_{\mathcal C}^2}{|\mathcal C|}.\]
Every such class has at least $p(p-1)/2$ members by \cite[Lemmas~A.2 and~A.3]{GuralnickZieve}. The sums of the relevant numerators are $25,97,361$ for $A_4,S_4$, and $A_5$, respectively. Thus the probability is less than one for $A_4$ and $S_4$ when $p\geq17$, and for $A_5$ when $p\geq29$.

By \cite[Theorem~A.1]{GuralnickZieve}, among the primes $17\leq p<29$, an $A_5$ subgroup occurs only when $p=19$. If $H<A_5$, then $H$ is contained in an $A_4$ or in a torus normalizer. Thus only the action with $p=19$ and $H\cong A_5$ remains.
\end{proof}

\begin{proposition}\label{prop:borel-fiber}
For $d,e\mid m$, the diagonal action of $G$ on $G/H_d\sqcup G/H_e$ is $3$-closed.
\end{proposition}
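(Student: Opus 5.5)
The plan is to model both orbits concretely and recover $\sigma$ in two stages: first its effect on the projective line, then its effect on the fibres. Let $V=\FF_p^2$. The group $\SL_2(p)$ acts on $V\setminus\{0\}$, and the stabilizer of $e_1$ is the unipotent group $U$. Hence $G/H_d$ can be identified with $(V\setminus\{0\})/D_d$, where $D_d\leq\FF_p^\times$ is a subgroup of index $m/d$; it contains $-1$ because $-I$ acts trivially. Each point of $G/H_d$ therefore lies over a point of $\PP^1(\FF_p)$, and the fibre over each projective point has $m/d$ elements. Set $X=G/H_d$, $Y=G/H_e$, and let $\sigma\in G^{(3),X\sqcup Y}$. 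By \cite[Lemma~2.1]{FreedmanGiudiciPraeger2024}, $\sigma$ preserves $X$ and $Y$.

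\emph{Step 1: descent to $\PP^1$.} Two points of $X$ (or of $Y$, or one from each) lie over the same projective point exactly when their joint stabilizer contains a full Sylow $p$-subgroup. This is a $G$-invariant condition on pairs, so $\sigma$ permutes fibres and induces a permutation $\bar\sigma$ of $\PP^1(\FF_p)$. This induced map is the same whether read from $X$ or from $Y$, using mixed pairs. Every triple of projective points lifts to a triple in $X$. Since $\sigma$ sends that lift to a $G$-image of itself, $\bar\sigma$ lies in $G^{(3),\PP^1}$. Next I show that $G$ is $3$-closed on $\PP^1(\FF_p)$ for prime $p$. Compose $\bar\sigma$ with an element of $G$ so that it fixes $\infty$ and $0$. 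As in Proposition~\ref{prop:extension-field}, the invariant $\Delta$ of the triples $(\infty,0,x)$ and $(\infty,x,y)$ shows that $\bar\sigma|_{\FF_p}$ preserves the square class of $x$ and of $y-x$. Thus it is an automorphism of the Paley graph (for $p\equiv1\pmod4$) or of the Paley tournament (for $p\equiv3\pmod4$) fixing $0$. By Carlitz's theorem it has the form $x\mapsto ax^{p^i}$ with $a$ a nonzero square. Since $p$ is prime, this is $x\mapsto ax$, which is induced by a diagonal element of $G$. Hence, after composing $\sigma$ with an element of $G$, we may assume that $\bar\sigma=1$.

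\emph{Step 2: the fibres.} Now $\sigma$ fixes every fibre in $X$ and in $Y$ setwise. Let $x,x'$ be two points of one fibre and let $z$ be arbitrary. The triple $(x,x',z)$ is moved by $\sigma$ as by some $g\in G$. This $g$ fixes the underlying projective point, so it acts on that fibre as multiplication by a scalar. Consequently $\sigma$ acts on the fibre over each $\ell\in\PP^1$ (in $X$, respectively in $Y$) as multiplication by some $\lambda_\ell\in\FF_p^\times/D_d$ (respectively $\mu_\ell\in\FF_p^\times/D_e$). For points over distinct projective points $\ell\neq\ell'$, the $G$-orbit of a pair is detected by $\det(v,w)$ modulo $D_dD_d$, $D_dD_e$ or $D_eD_e$, according to the orbits involved. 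Preservation of $2$-orbits then yields relations of the form $\lambda_\ell\lambda_{\ell'}=1$ for all $\ell\neq\ell'$. Using three distinct points gives $\lambda_\ell=\lambda_{\ell'}=\lambda$ with $\lambda^2=1$, so $\lambda=\pm1$, and similarly for the $\mu_\ell$, with the mixed relations tying $\lambda$ and $\mu$ together. Since $-1\in D_d\cap D_e$, $\sigma$ is trivial, and the original $\sigma$ lies in $G$. The case $d=e$, that is $X\sqcup X$, is covered by the same argument.

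\emph{Expected obstacle.} The step I expect to need the most care is Step~2: determining exactly which quotient group the determinant invariant lives in for each type of pair, and checking that these invariants really separate the $G$-orbits on pairs from distinct fibres. The orbit count is $|H_d\backslash G/H_e|$, which can be compared with the number of classes of $\det$, so that the relations $\lambda_\ell\lambda_{\ell'}=1$ are justified. The Paley step is standard once Carlitz's theorem is quoted. If that theorem is to be avoided, I would instead argue directly from the $2$-transitivity of $G$ and the fact that a permutation of a prime field preserving the square classes of all differences is affine.
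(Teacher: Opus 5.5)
Your Step~1 is sound and is essentially the paper's argument. The fibre relation is a union of pair orbits, the induced map on $\PP^1(\FF_p)$ lies in the $3$-closure there, and Carlitz's theorem makes it an element of $G$.

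The gap is in Step~2, at ``so $\lambda=\pm1$ \ldots\ Since $-1\in D_d\cap D_e$, $\sigma$ is trivial.'' The relation you derived is $\lambda^2=1$ in the quotient $\FF_p^\times/D_d$, which is cyclic of order $m/d$. It only says $\lambda^2\in D_d$. When $m/d$ is even, this quotient has a nontrivial element of order two. That element is not the class of $-1$, which is trivial because $-1\in D_d$. For instance, with $p=17$ and $d=1$ one has $D_1=\{\pm1\}$, while $\lambda=4$ satisfies $\lambda^2=-1\in D_1$ and $4\notin D_1$.

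No refinement of the pair bookkeeping can close this. Take $d=e$ and let $\sigma$ act on both copies by $[v]\mapsto[\lambda v]$ with $\lambda^2\in D_d$. Every $G$-orbit on pairs is preserved: ratios within a fibre are unchanged, and $\det(\lambda v,\lambda w)=\lambda^2\det(v,w)$ has the same class modulo $D_d$. But this map is not induced by $G$ when $\lambda\notin D_d$. An element of $\SL_2(p)$ inducing it would fix every point of $\PP^1(\FF_p)$, hence be $\pm I$, which acts trivially. So the $2$-closure is genuinely larger here, and the mixed relations $\lambda\mu\in D_dD_e$ do not rescue the argument.

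The missing step is a use of the $3$-closure on a triple lying over three distinct projective points, which is how the paper finishes. After normalizing so that $\sigma$ fixes every fibre, choose $x_1,x_2,x_3\in X$ over three distinct points of $\PP^1(\FF_p)$. Some $g\in G$ agrees with $\sigma$ on $(x_1,x_2,x_3)$, so $g$ fixes three projective points and is therefore trivial. This forces $\lambda\in D_d$, that is, $\lambda=1$ in $\FF_p^\times/D_d$. The same argument in $Y$ gives $\mu=1$.

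Your triples $(x,x',z)$ only show that $\sigma$ is a scalar on each fibre, which already follows from pair orbits. With the step above added, your proof is complete. As written, it only works when $m/d$ and $m/e$ are both odd, for instance whenever $p\equiv3\pmod4$.
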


\begin{proof}
Let $E_d\leq\FF_p^\times$ have order $2d$. Then $-1\in E_d$, and
\[G/H_d\cong X_d=(\FF_p^2\setminus\{0\})/E_d.\]
Let $\pi_d:X_d\longrightarrow\PP^1(\FF_p)$ be the natural projection.

Take $\sigma\in G^{(3),X_d\sqcup X_e}$. By \cite[Lemma~2.1]{FreedmanGiudiciPraeger2024}, $\sigma$ preserves both constituents and every $G$-orbit on ordered pairs. For $x=[v]\in X_d$ and $y=[w]\in X_e$, one has
\[\pi_d(x)=\pi_e(y)\quad\Longleftrightarrow\quad \det(v,w)=0.\]
Hence $\sigma$ induces the same permutation $\tau$ of $\PP^1(\FF_p)$ on both constituents.

For three distinct projective points $L_i=\langle v_i\rangle$, the square class
\[\det(v_1,v_2)\det(v_2,v_3)\det(v_3,v_1) \quad\bmod (\FF_p^\times)^2 \]
is well defined and $G$-invariant. Thus $\tau$ preserves this square class.
After composing with an element of $G$, we may assume that $\tau$ fixes
$\infty$, $0$, and $1$. Its restriction $f$ to $\FF_p$ then satisfies
\[
\frac{f(a)-f(b)}{a-b}\in(\FF_p^\times)^2
\]
for all $a\neq b$. By Carlitz's theorem \cite[Theorem, p.~456]{Carlitz1960}, $f$ is affine.
Since $f(0)=0$ and $f(1)=1$, we have $f=1$. Therefore $\tau\in G$.
Composing $\sigma$ with an element of $G$, we may assume that $\tau=1$.

Fix $r\in\{d,e\}$. Since $\sigma$ fixes every projective fiber, its action on the fiber over $L$ is multiplication by some $c_{r,L}\in\FF_p^\times/E_r$. For distinct lines $L$ and $M$, the class of $\det(v,w)$ modulo $E_r$ determines the $G$-orbit of the corresponding pair. Hence $c_{r,L}c_{r,M}=1$. Using three distinct lines, all $c_{r,L}$ are equal to some $c_r$ with $c_r^2=1$.

Choose points of $X_r$ lying over three distinct projective points. By the definition of the $3$-closure, some $g\in G$ agrees with $\sigma$ on these three points. Since $g$ fixes the corresponding projective points, $g=1$ \cite[p.~245]{DixonMortimer}. Hence $c_r=1$, and $\sigma$ fixes $X_r$ pointwise. Applying this for $r=d,e$, we obtain $\sigma=1$ after normalization. Thus the original permutation belongs to $G$.
\end{proof}

\begin{theorem}\label{thm:large-prime}
For every prime $p\geq17$ with $p\neq19$, the group $\PSL_2(p)$ is totally $3$-closed.
\end{theorem}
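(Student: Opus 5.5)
By Proposition~\ref{prop:two-orbit}, it suffices to show that for every pair of proper subgroups $H,K<G=\PSL_2(p)$ the diagonal action on $G/H\sqcup G/K$ is $3$-closed. The first step is to shrink $H$ and $K$ using Lemma~\ref{lem:psl2-subgroups}. If $H$ lies in a torus normalizer or in an exceptional subgroup $A_4$, $S_4$, $A_5$, then Lemma~\ref{lem:base-two-psl2} gives a base of size at most two for $G/H$; since $p\neq 19$, the $A_5$ exception does not occur. Otherwise $H$ is contained in a conjugate of some $H_d$ with $d\mid m$, and after conjugating (which only replaces the coset action by an equivalent one) we may assume $H\le H_d$. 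We then split into cases according to whether each of $H,K$ has a base of size at most two.

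\emph{Case 1: one of the actions, say on $G/H$, has base size at most two.} We intend to apply Proposition~\ref{prop:base-two} with $X=G/H$ and $Y=G/K$. This needs both actions to be $3$-closed on their own. For $X$ this follows from Lemma~\ref{lem:base-size}. For $Y$ there are two possibilities: either it also has base size at most two, or $K$ is in the Borel case. In the Borel case we first need $G/K$ to be $3$-closed.

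\emph{Case 2: both $H$ and $K$ lie in Borel subgroups and have no base of size two.} The key sub-step is to show that if $L\le H_d$ is not of the form $H_{d'}$, then $G/L$ has a base of size at most two. Since $U\cong C_p$ has prime order, either $U\le L$, in which case $L=U\rtimes(L\cap C_m)$ up to conjugacy inside $H_d$ and so $L=H_{d'}$ for some $d'\mid m$, or $L\cap U=1$. In the latter case $L$ is cyclic of order dividing $m$ and is contained in a split torus normalizer, so Lemma~\ref{lem:base-two-psl2} applies. Thus the subgroups with no base of size two are, up to conjugacy, exactly the $H_d$.

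For those, Proposition~\ref{prop:borel-fiber} with $d=e$ shows that $G/H_d\sqcup G/H_d$ is $3$-closed. Restricting to one constituent, via \cite[Lemma~2.2]{FreedmanGiudiciPraeger2024} and faithfulness, gives that $G/H_d$ itself is $3$-closed; alternatively, the argument of Proposition~\ref{prop:borel-fiber} can be run on a single constituent. This completes Case 1. In Case 2 both subgroups are conjugates of some $H_d$ and $H_e$, and Proposition~\ref{prop:borel-fiber} handles the union directly.

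I expect the main obstacle to be the bookkeeping in the Borel reduction: the claim that a subgroup $L$ of $H_d$ with $L\cap U=1$ lies in a split torus normalizer. Such an $L$ is a $p'$-subgroup of $U\rtimes C_m$, so by Schur--Zassenhaus it is conjugate into $C_m$, which is a split torus. Beyond that, all remaining steps are direct citations of Lemma~\ref{lem:base-size} and Propositions~\ref{prop:base-two} and~\ref{prop:borel-fiber}.
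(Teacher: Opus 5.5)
Your proposal is correct and follows the paper's proof essentially step for step: you reduce via Proposition~\ref{prop:two-orbit} and Lemma~\ref{lem:psl2-subgroups}, note that a subgroup of a Borel subgroup avoiding $U$ is conjugate into the split torus (your Schur--Zassenhaus remark), handle every pair with a base-size-two constituent by Proposition~\ref{prop:base-two}, and handle pairs of conjugates of the $H_d$ by Proposition~\ref{prop:borel-fiber}. One small correction: the step from $3$-closedness of $G/H_d\sqcup G/H_d$ to $3$-closedness of $G/H_d$ does not come from \cite[Lemma~2.2]{FreedmanGiudiciPraeger2024}, which only restricts elements of the $3$-closure and so gives the wrong inclusion; use the diagonal doubling $\sigma\mapsto\widehat{\sigma}$ from the case $r=1$ in the proof of Proposition~\ref{prop:two-orbit} instead, or your alternative of rerunning the fiber argument on a single constituent, which works.
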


\begin{proof}
Let $H,K<G$. By Lemma~\ref{lem:psl2-subgroups}, each of $H$ and $K$ is either conjugate to some $H_d$, or its coset action has base size at most two by Lemma~\ref{lem:base-two-psl2}. Indeed, a subgroup of a Borel subgroup which does not contain $U$ is conjugate to a subgroup of the split torus.

By Lemma~\ref{lem:base-size}, every coset action with base size at most two is $3$-closed. Moreover, Proposition~\ref{prop:borel-fiber}, applied with $d=e$, implies that each action on $G/H_d$ is $3$-closed.

If both $H$ and $K$ are conjugate to subgroups of the form $H_d$, then the action on $G/H\sqcup G/K$ is $3$-closed by Proposition~\ref{prop:borel-fiber}. Otherwise, one constituent has base size at most two, so the action is $3$-closed by Proposition~\ref{prop:base-two}. The result now follows from Proposition~\ref{prop:two-orbit}.
\end{proof}

\subsection{The remaining four primes}
It remains to treat the cases $p=7,11,13,19$. The four proofs are similar and rely on the subgroup structure of each group, particularly its maximal and nonmaximal subgroups. We begin with $p=7$.

\begin{theorem}\label{thm:psl27}
The group $\PSL_2(7)$ is totally $3$-closed.
\end{theorem}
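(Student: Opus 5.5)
By Proposition~\ref{prop:two-orbit}, it suffices to show that for every pair of proper subgroups $H,K<G=\PSL_2(7)$, the action on $G/H\sqcup G/K$ is $3$-closed. We use the subgroup lattice of $G$, which has order $168$. Up to conjugacy, the proper subgroups are
\[
1,\ C_2,\ C_3,\ C_4,\ V_4\ (\text{two classes}),\ S_3,\ C_7,\ D_8,\ A_4\ (\text{two classes}),\ C_7\rtimes C_3,\ S_4\ (\text{two classes}).
\]
The maximal ones are the two classes of $S_4$, which are the stabilizers of points and of lines of the Fano plane, and the Borel subgroup $C_7\rtimes C_3$. The plan is to sort these subgroups into those whose coset action has base size at most two and the small remainder.

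\textbf{Step 1 (base size).} For each small subgroup $L$, we exhibit $g$ with $L\cap L^g=1$. Then $G/L$ is $3$-closed by Lemma~\ref{lem:base-size}, and by Proposition~\ref{prop:base-two} it may be paired with any $3$-closed transitive action.
\begin{itemize}
\item For $|L|\le 8$ this should follow from counting. For a Sylow $2$-subgroup $D_8$, note that $|G:D_8|=21$. The $D_8$'s correspond to flags of the Fano plane, so it suffices to find two flags whose stabilizers meet trivially. A point--line pair in general position should work, as should a direct check.
\item For $C_7\rtimes C_3$, a Sylow $7$-normalizer, two distinct conjugates meet in a subgroup of order dividing $3$. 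A count of the conjugates containing a fixed $C_3$ gives one meeting it trivially.
\item For $A_4$ and $S_4$ we expect base size three. The natural actions on the $7$ points or $7$ lines of the Fano plane need three points to form a base, since a pointwise stabilizer of two points is nontrivial. So these cases must be handled directly.
\end{itemize}

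\textbf{Step 2 (the remaining single actions).}
\begin{itemize}
\item For $G/S_4$ on the Fano plane: the $3$-closure preserves the collinear-triple orbit, hence lies in $\Aut(\text{Fano})=G$.
\item For $G/A_4$, of degree $14$: the projection to $G/S_4$ is recovered from the $2$-orbits of $G$, since the fibers form a block system. This shows that $\sigma$ induces an element of $G$ on the Fano plane. After normalizing, $\sigma$ fixes every fiber setwise. We then use a triple meeting three noncollinear fibers, whose pointwise stabilizer in $G$ is trivial, to force $\sigma=1$. This is in the spirit of the fiber argument of Proposition~\ref{prop:borel-fiber}.
\end{itemize}

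\textbf{Step 3 (pairs).} Unions involving a base-two constituent are covered by Proposition~\ref{prop:base-two}. For pairs from $\{S_4,S_4',A_4,A_4'\}$ with $G\neq HK$, Proposition~\ref{prop:nonfactor} applies. Note that $G=S_4S_4'$ fails for two subgroups of the same class, but for a point stabilizer $H$ and a line stabilizer $K$ we have $|HK|=24\cdot24/|H\cap K|$, and $H\cap K$ is either $D_8$ or $S_3$. So $G\ne HK$ in both cases, and similarly for the subgroups of $A_4$ type. Any remaining mixed pair, such as points together with lines, is handled directly. The mixed triple orbits encode incidence, so $\sigma$ induces a collineation on both constituents. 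Collineations of the Fano plane lie in $G$, which is the same element on both parts via the incidence relation.

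The main obstacle I anticipate is the $A_4$ cases and their unions in Step 2 and Step 3. There, neither base size nor the nonfactorization criterion obviously applies, so one must verify carefully that fiber information plus a three-point base forces triviality. If this becomes delicate, I would fall back on an explicit \GAP{} check of these finitely many actions.
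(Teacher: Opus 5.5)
There is a genuine gap in Step~1. The Borel subgroup $B=C_7\rtimes C_3$ does \emph{not} have a conjugate meeting it trivially. The coset action $G/B$ is the natural action on $\PP^1(\FF_7)$. It is $2$-transitive of degree $8$, so a two-point stabilizer has order $168/56=3$, and $B\cap B^g\cong C_3$ for every $g\notin B$. The count you propose confirms this. $G$ has $28$ subgroups of order $3$, and each of the $8$ conjugates of $B$ contains $7$ of them. So each $C_3\le B$ lies in exactly one other conjugate, and these account for all $7$ other conjugates. The degree-$8$ action therefore has base size $3$, and neither Lemma~\ref{lem:base-size} nor Proposition~\ref{prop:base-two} applies to it.

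As a result, the cases that carry the real content of the theorem are missing from your plan.

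\emph{The action on $\PP^1(\FF_7)$ itself.} You need a direct proof that it is $3$-closed. This is not automatic: there are two orbits on triples of distinct points, separated by the square class of $\det(v_1,v_2)\det(v_2,v_3)\det(v_3,v_1)$. The paper normalizes $\sigma$ to fix $\infty,0,1$. It then uses the values $\chi(x)$, $\chi(x-1)$ and $\chi(x-2)$ of the quadratic character $\chi$ to pin down every remaining point.

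\emph{The mixed unions.} More seriously, nothing in your proposal covers the union of $\PP^1(\FF_7)$ with the Fano point or line action. Since $S_4\cap B\cong C_3$, we get $|S_4B|=24\cdot 21/3=168$, so $G=S_4B$ and Proposition~\ref{prop:nonfactor} fails. Neither constituent has base size two. Your incidence argument in Step~3 has nothing to act on, because there is no incidence between Fano points and points of $\PP^1(\FF_7)$.

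The paper handles this pair as follows. It normalizes $\sigma$ to fix the Fano points and to act on $\PP^1(\FF_7)$ as some $g\in G$. It then uses $y^g\in y^{G_{a,b}}$ for the seven subgroups $G_{a,b}\cong V_4$, whose orbits separate the eight points, so $g=1$. An argument in the style of the $\PSL_2(11)$ proof also works. Every subgroup of order $3$ in $G_x\cong S_4$ equals $G_{y,z}$ for two points $y,z$ of $\PP^1(\FF_7)$. Applying the closure to the triple $(x,y,z)$ gives $g\in G_x$ for every $x$, hence $g=1$.

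Once the action on $\PP^1(\FF_7)$ is known to be $3$-closed, the union of two copies of it is routine by Proposition~\ref{prop:nonfactor}, since $21^2/168\notin\ZZ$.

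Your expectation for $A_4$ is also the wrong way round. The seven subgroups $V_4$ in a class partition the $21$ involutions, and each subgroup of order $3$ lies in exactly one $A_4$ of each class. Hence distinct conjugates of $A_4$ intersect trivially, and $G/A_4$ has base size two. Your fiber argument for $G/A_4$ is correct but unnecessary. The real obstacle is the Borel subgroup, not $A_4$.
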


\begin{proof}
Put $G=\PSL_2(7)$. The maximal subgroups of $G$ consist of two conjugacy classes of subgroups isomorphic to $S_4$ and one conjugacy class of subgroups isomorphic to $C_7\rtimes C_3$ \cite[p.~3]{AtlasFiniteGroups}. Thus the maximal coset actions are the point and line actions on the Fano plane and the natural action on $\PP^1(\FF_7)$.

The proper nonmaximal subgroups have types
\[C_2,\ C_3,\ C_4,\ C_7,\ V_4,\ S_3,\ D_8,\ A_4.\]
A direct count in the subgroup lattice shows that each has a conjugate with trivial intersection. Hence every nonmaximal coset action has base size at most two, and is therefore $3$-closed by Lemma~\ref{lem:base-size}.

Identify $G\cong\GL_3(2)$ \cite[Section~4, pp.~729--731]{BrownLoehr2009}. In the point action on the Fano plane, the two orbits on ordered triples of distinct points are the collinear and noncollinear triples. Thus every element of the $3$-closure preserves collinearity, and hence belongs to the full automorphism group of the Fano plane, which is $G$. The point action is therefore $3$-closed, and the same holds for the line action by duality.

Now consider the action on $\Omega=\PP^1(\FF_7)$. Its two orbits on ordered triples of distinct points are distinguished by the determinant square class. Let $\sigma\in G^{(3),\Omega}$. After composing with an element of $G$, we may assume that $\sigma$ fixes $\infty,0,1$. For $x\in\FF_7\setminus\{0,1\}$, the triple orbits determine $\bigl(\chi(x),\chi(x-1)\bigr)$, where $\chi$ is the quadratic character of $\FF_7$. These values fix $2$, $4$, and $6$, leaving only the possible interchange of $3$ and $5$. Indeed, $\Delta(\infty,2,x)=x-2$. Since $1$ is a square and $3$ is a nonsquare in $\FF_7$, the triples $(\infty,2,3)$ and $(\infty,2,5)$ lie in different $G$-orbits. Hence $\sigma$ cannot interchange $3$ and $5$. Thus $\sigma=1$ after normalization, and the degree-eight action is $3$-closed.

It remains to consider diagonal actions on unions of two maximal coset actions. If $|H|=|K|=24$, then $G\neq HK$, since $\frac{|H||K|}{|G|}=\frac{24}{7}\notin\ZZ$. Similarly, if $|H|=|K|=21$, then $\frac{|H||K|}{|G|}=\frac{21}{8}\notin\ZZ$, so again $G\neq HK$. Hence every union of two degree-seven actions, as well as the union of two degree-eight actions, is $3$-closed by Proposition~\ref{prop:nonfactor}.

Let $X$ be the point set of the Fano plane and let $Y=\PP^1(\FF_7)$. Suppose that $\sigma\in G^{(3),X\sqcup Y}$. Since both constituent actions are $3$-closed, after composing with an element of $G$, we may assume that $\sigma$ fixes $X$ pointwise and acts on $Y$ as some $g\in G$. For distinct $a,b\in X$, the mixed triples $(a,b,y)$ show that $y^g\in y^{G_{a,b}}$ for every $y\in Y$. Here $G_{a,b}\cong V_4$, and the seven such subgroups, indexed by the lines of the Fano plane, each have two orbits of size four on $Y$. Under a suitable labeling of $Y=\{\infty,0,\ldots,6\}$, the orbits containing $\infty$ are
\[
\begin{gathered}
\{\infty,0,2,6\},\quad \{\infty,1,2,4\},\quad \{\infty,2,3,5\},\quad \{\infty,0,1,3\},\\
\{\infty,0,4,5\},\quad \{\infty,1,5,6\},\quad \{\infty,3,4,6\}.
\end{gathered}
\]
The membership vectors of the eight points in these seven sets are distinct. Thus a permutation preserving every one of these sets is trivial, so $g=1$. The same argument applies to the line action by duality.

Therefore the diagonal action on every pair of maximal coset actions is $3$-closed. Combining this with Proposition~\ref{prop:base-two} for nonmaximal stabilizers, the result follows from Proposition~\ref{prop:two-orbit}.
\end{proof}

\begin{theorem}\label{thm:psl211}
The group $\PSL_2(11)$ is totally $3$-closed.
\end{theorem}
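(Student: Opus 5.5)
We follow the template of Theorem~\ref{thm:psl27}, working through the subgroup structure of $G=\PSL_2(11)$, $|G|=660$. By the Atlas, the maximal subgroups of $G$ fall into four classes:
\begin{enumerate}
\item[\textup{(a)}] two classes of $A_5$, each of index $11$;
\item[\textup{(b)}] the Borel subgroup $C_{11}\rtimes C_5$, of index $12$;
\item[\textup{(c)}] the dihedral group $D_{12}$, of index $55$.
\end{enumerate}

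\emph{Nonmaximal and dihedral stabilizers.} For every proper nonmaximal subgroup $L$ and for $D_{12}$, we produce a conjugate $L^g$ with $L\cap L^g=1$. The nonmaximal subgroups have types $C_2,C_3,C_5,C_6,C_{11},V_4,S_3,D_{10},A_4$, together with subgroups of the Borel. The subgroups $H_1=C_{11}$ and $H_5$ (the Borel itself) contain $U$ and are handled by Proposition~\ref{prop:borel-fiber}; that proposition's proof is valid for any prime $p$, since Carlitz's theorem needs only $p$ odd. For the remaining subgroups we use either the union-bound count from Lemma~\ref{lem:base-two-psl2} or an explicit count in the subgroup lattice. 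For $A_4$ the count gives $25/55<1$. For $D_{12}$, which is a split torus normalizer, we take stabilizers of $\{a,b\}$ and $\{a,c\}$. Hence all these actions have base size at most two and are $3$-closed by Lemma~\ref{lem:base-size}. Whenever one constituent is of this type, Proposition~\ref{prop:base-two} handles the union, provided the other constituent is $3$-closed.

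\emph{The degree-$11$ actions.} These are the point and block actions on the unique $2$-$(11,5,2)$ biplane, whose automorphism group is exactly $G$. The block stabilizer is $A_5$ acting $2$-transitively on its five points, and two points lie in exactly two blocks. I expect the orbits of $G$ on ordered triples of distinct points to be distinguished by whether the triple lies in a common block. Then an element of the $3$-closure preserves the block system, hence lies in $\Aut$ of the biplane, which is $G$. The same holds for the dual action. Orbit counting, $|G|/|{\rm triples}|=660/990$, shows that $G$ is not $3$-transitive, so the triple orbits can carry this information. Verifying that the triple orbits recover the blocks is a concrete check: a triple lies in a common block in exactly one of the two triple orbits.

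\emph{Unions of maximal actions.} The degree-$12$ action is $\PP^1(\FF_{11})$ and is covered by Proposition~\ref{prop:borel-fiber}. For two degree-$11$ constituents in the same class we have $|A_5|^2/|G|=60\cdot60/660\notin\ZZ$, so $G\neq HK$. For two degree-$11$ constituents in different classes, the factorization question must be checked; the two $A_5$ classes are swapped by $\PGL_2(11)$. If these do factor, we instead argue as in Theorem~\ref{thm:psl27}. We normalize $\sigma$ to be trivial on $X$. Then for distinct $a,b\in X$, the element $\sigma$ preserves every $G_{a,b}$-orbit on $Y$, where $G_{a,b}\cong S_3$ or $C_2$-type. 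We then show that the resulting family of orbit sets separates the points of $Y$.

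The mixed union of degree $11$ and degree $12$ is the main obstacle. Here $G=A_5\cdot (C_{11}\rtimes C_5)$ is a genuine factorization, since $A_5$ is transitive on the projective line, so Proposition~\ref{prop:nonfactor} does not apply. I would normalize $\sigma$ to fix $X$ pointwise. Then, for each pair $a,b$ of points of $X$, the element $\sigma$ must preserve each orbit of $G_{a,b}\cong S_3$ on $\PP^1(\FF_{11})$, and these orbits have sizes $6,6$ or $3,3,6$. Finally I would check, explicitly or by a short computation, that the $55$ induced partitions separate the $12$ points of $\PP^1(\FF_{11})$. This forces $\sigma=1$, and Proposition~\ref{prop:two-orbit} then completes the proof.
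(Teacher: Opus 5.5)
The overall structure matches the paper: base size two everywhere except the degree-$11$ and degree-$12$ actions, the biplane for degree $11$, nonfactorization for like pairs, and a separate argument for the mixed $11+12$ union. However, your degree-$11$ step has a genuine gap.

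Let $\mathcal R$ be the relation ``the three points lie in a common block''. That $\mathcal R$ is a union of $G$-orbits on triples is automatic, because $G$ preserves the blocks; the two-orbit count is not needed. This only tells you that an element $\sigma$ of the $3$-closure preserves $\mathcal R$. So $\sigma$ sends each block to a $5$-set all of whose triples lie in $\mathcal R$. To conclude that $\sigma$ preserves the block system, you must show that every such $5$-set is a block. That is the real content of this step, and the ``concrete check'' you describe does not address it.

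The paper supplies this from the biplane parameters. Since blocks meet in two points, a triple lies in at most one block. Let $C$ be such a $5$-set and let $B$ be the block through three of its points.
\begin{itemize}
\item If $|B\cap C|=4$, the six pairs in $B\cap C$ give six distinct blocks through the remaining point of $C$. But each point lies on only five blocks.
\item If $|B\cap C|=3$, both points of $C\setminus B$ lie in each of the three distinct second blocks through the pairs of $B\cap C$. This contradicts $\lambda=2$.
\end{itemize}
Hence $C=B$. Only then does it follow that $\sigma$ lies in the automorphism group of the biplane, which is $G$, for the point action and dually for the block action.

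Several smaller points need fixing, though none breaks the plan.
\begin{itemize}
\item $D_{12}$ is the normalizer of the nonsplit torus. The stabilizer of $\{a,b\}\subset\PP^1(\FF_{11})$ is $D_{10}$, not $D_{12}$. A base of size two for $D_{12}$ instead comes from the count in Lemma~\ref{lem:base-two-psl2}: each of its seven involutions lies in seven of the $55$ conjugates, so at most $43$ conjugates meet it nontrivially.
\item Two subgroups $H,K\cong A_5$ never factorize $G$, whatever their classes. Indeed $HK=G$ would force $|H\cap K|=3600/660\notin\ZZ$, so no fallback argument is needed.
\item In the mixed case, elements of orders $2$ and $3$ fix no point of $\PP^1(\FF_{11})$. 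So $G_{a,b}\cong S_3$ always has two orbits of size $6$; the $3,3,6$ case never occurs.
\item The separation you defer does hold. By $2$-transitivity on $Y$, a given pair $y\neq y'$ shares a $G_{a,b}$-orbit for exactly $55\cdot 5/11=25$ of the $55$ pairs $\{a,b\}$.
\end{itemize}

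The paper's route in the mixed case is shorter. A subgroup $C_5\leq G_x$ equals $G_{y,z}$ for two points $y,z\in Y$, and the triple $(x,y,z)$ forces $h\in G_x$ for every $x$, so $h=1$.

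Your use of Proposition~\ref{prop:borel-fiber} at $p=11$ for the degree-$12$ action and its self-union is a valid alternative. The paper instead uses the Paley-tournament argument and the count $55^2/660\notin\ZZ$.
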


\begin{proof}
Put $G=\PSL_2(11)$. The maximal subgroups of $G$ are two conjugacy classes of subgroups isomorphic to $A_5$, one class of subgroups isomorphic to $C_{11}\rtimes C_5$, and one class of subgroups isomorphic to $D_{12}$ \cite{AtlasFiniteGroups}. A direct intersection calculation shows that every coset action other than the two actions of degree eleven and the action of degree twelve has a base of size at most two. Such actions are $3$-closed by Lemma~\ref{lem:base-size}.

The two degree-eleven actions are the point and block actions of the unique $2$-$(11,5,2)$ biplane, whose full automorphism group is $G$ \cite[Section~3, p.~5]{AlaviDaneshkhahPraeger}. Let $\mathcal R$ be the relation on triples of distinct points defined by requiring the three points to lie in a block. We claim that the blocks are exactly the five-subsets all of whose triples lie in $\mathcal R$.

Indeed, distinct blocks meet in two points. Let $C$ be such a five-subset, choose three points of $C$, and let $B$ be the unique block containing them. If $|B\cap C|=4$, the six unordered pairs in $B\cap C$ give six distinct blocks through the remaining point of $C$, whereas each point lies in only five blocks. If $|B\cap C|=3$, the unique second blocks through the three pairs in $B\cap C$ all contain the other two points of $C$, contradicting the fact that two points lie in exactly two blocks. Hence $C=B$.

Thus every element of the $3$-closure preserves the blocks, and hence belongs to $G$. The point action is $3$-closed, and the block action is $3$-closed by duality.

Now consider the natural action of $G$ on $\PP^1(\FF_{11})$, and let $\sigma$ lie in its $3$-closure. After composing with an element of $G$, we may assume that $\sigma$ fixes $\infty$. For $a\neq b$ in $\FF_{11}$, the $G$-orbit of $(\infty,a,b)$ is determined by the square class of $b-a$. Hence
\[b-a\in(\FF_{11}^\times)^2 \quad\Longleftrightarrow\quad b^\sigma-a^\sigma\in(\FF_{11}^\times)^2. \]
Therefore $\sigma|_{\FF_{11}}$ is an automorphism of the Paley tournament on $\FF_{11}$. The automorphism group of this tournament consists of the maps
\[x\longmapsto ux+v, \qquad u\in(\FF_{11}^{\times})^2,\quad v\in\FF_{11},\]
by \cite[Section~9.7]{JonesPaley}. This is precisely the stabilizer of $\infty$ in $G$, so the projective-line action is $3$-closed.

It remains to consider the diagonal actions on unions of these three coset actions. If $H,K\cong A_5$, then
\[\frac{|H||K|}{|G|}=\frac{60^2}{660}=\frac{60}{11}\notin\ZZ,\]
so $G\neq HK$. Similarly, if $|H|=|K|=55$, then
\[\frac{|H||K|}{|G|}=\frac{55^2}{660}=\frac{605}{132}\notin\ZZ,\]
and again $G\neq HK$. Hence the diagonal action on any union of two
degree-eleven actions, as well as on the union of two degree-twelve actions, is $3$-closed by Proposition~\ref{prop:nonfactor}.

Let $X=G/A_5$, and let $Y=\PP^1(\FF_{11})$. Take $\sigma\in G^{(3),X\sqcup Y}$. Since both constituent actions are $3$-closed, we may normalize so that $\sigma$ fixes $X$ pointwise and its restriction to $Y$ is induced by some $h\in G$.

For each $x\in X$, choose a subgroup $P\leq G_x$ of order $5$. Then $P=G_{y,z}$ for some distinct $y,z\in Y$. Applying the definition of the $3$-closure to $(x,y,z)$ gives $a\in G_x$ such that $y^a=y^h$, and $ z^a=z^h$. Thus $ah^{-1}\in G_{y,z}=P\leq G_x$, and hence $h\in G_x$. Since this holds for every $x\in X$, $h\in\bigcap_{x\in X}G_x=1$. Therefore the mixed action on $X\sqcup Y$ is $3$-closed.

Every pair of coset actions is now $3$-closed by Proposition~\ref{prop:base-two}, Proposition~\ref{prop:nonfactor}, or the preceding argument. The result follows from Proposition~\ref{prop:two-orbit}.
\end{proof}

\begin{theorem}\label{thm:psl213}
The group $\PSL_2(13)$ is totally $3$-closed.
\end{theorem}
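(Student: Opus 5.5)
We follow the template of Theorems~\ref{thm:psl27} and~\ref{thm:psl211}, combined with the Borel-fiber argument of Proposition~\ref{prop:borel-fiber}. Put $G=\PSL_2(13)$, of order $1092$. By \cite{AtlasFiniteGroups}, the maximal subgroups are $C_{13}\rtimes C_6$ (the Borel subgroup, index $14$), $D_{14}$ (index $78$), $D_{12}$ (index $91$) and $A_4$ (index $91$). Since $13\equiv 1\pmod 4$, no $S_4$ or $A_5$ occurs.

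The first step is to sort all proper subgroups into two types, using Lemma~\ref{lem:psl2-subgroups} with $p=13$ and $m=6$.
\begin{enumerate}
\item[(a)] Subgroups contained in a torus normalizer ($D_{12}$ or $D_{14}$) or in $A_4$. For these we show directly that $L\cap L^g=1$ for some $g$, so the coset action has base size at most two.
\begin{itemize}
\item For the split torus normalizer, we reuse the argument of Lemma~\ref{lem:base-two-psl2}: it is the stabilizer of a pair $\{a,b\}$, and the stabilizers of $\{a,b\}$ and $\{a,c\}$ meet trivially.
\item For the nonsplit torus normalizer, the count from that lemma gives at most $(p-1)^2/4=36$ conjugates meeting $N$, out of $p(p-1)/2=78$ conjugates.
\item For $A_4$, the union bound $25/78<1$ applies.
\end{itemize}
\item[(b)] Subgroups of a Borel subgroup containing $U$, i.e.\ the groups $H_d$ with $d\mid 6$.
\end{enumerate}
A subgroup of a Borel subgroup not containing $U$ lies in a split torus, so it falls under (a).

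The second step handles type (b). Proposition~\ref{prop:borel-fiber} was stated under the standing assumption $p\ge 17$, so we reread its proof for $p=13$. The fiber description $G/H_d\cong(\FF_p^2\setminus\{0\})/E_d$ is valid for any odd prime. The recovery of the projective permutation $\tau$ from determinant vanishing is also valid. The step needing care is Carlitz's theorem: it holds for all odd primes $p$, and possibly $p=13$ needs a direct check. If so, we verify it as in the $p=7$ and $p=11$ cases. Normalize $\tau$ to fix $\infty,0,1$; then $f$ preserves the Paley graph on $\FF_{13}$, since $-1$ is a square. The automorphisms of the Paley graph fixing $0$ and $1$ are field-automorphism twists, which are trivial over a prime field. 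This gives $f=1$. The rest of the fiber argument, with the constants $c_{r,L}$ and the sharp $3$-transitivity of $\PGL_2$, goes through unchanged. So every action on $G/H_d\sqcup G/H_e$ is $3$-closed, including the case $d=e$.

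The final step combines these. Every coset action of type (a) has base size at most two, so it is $3$-closed by Lemma~\ref{lem:base-size}. Any union with one constituent of type (a) is then $3$-closed by Proposition~\ref{prop:base-two}. Unions of two type-(b) actions are $3$-closed by the second step. Proposition~\ref{prop:two-orbit} then finishes the proof.

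The main obstacle is the first step, namely confirming trivial-intersection conjugates at this small prime. This matters most for $A_4$ and the nonsplit $D_{14}$, and also for their subgroups; subgroups are automatic, since $L\cap L^g=1$ passes to subgroups. If a union bound fails anywhere, we would fall back on an explicit check in the subgroup lattice, as was done for $p=7$.
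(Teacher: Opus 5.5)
Your proposal is correct. Its skeleton matches the paper: the two-orbit reduction, base size two for everything inside $D_{12}$, $D_{14}$ and $A_4$, and a scalar-fiber analysis for the groups $H_d$. The difference is in how the Borel-type constituents are handled.

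The paper treats $p=13$ piecemeal:
\begin{itemize}
\item it places $H_1=U$ among the base-size-two actions;
\item it proves that $G/H_6\cong\PP^1(\FF_{13})$ is $3$-closed via the Paley graph;
\item it runs the fiber argument only on a single constituent $X_E$ with $|E|=4$ or $6$;
\item it disposes of every pair among $H_2,H_3,H_6$ by Proposition~\ref{prop:nonfactor}, since two Borel subgroups meet in a torus of order $6$, giving $|H_6H_6^g|\le 78^2/6<1092$.
\end{itemize}

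You instead observe that the proof of Proposition~\ref{prop:borel-fiber} never uses $p\ge 17$. Each of its ingredients holds for every odd prime:
\begin{itemize}
\item the identification $G/H_d\cong(\FF_p^2\setminus\{0\})/E_d$;
\item the synchronization of the projective quotients through $\det(v,w)=0$;
\item Carlitz's theorem, or equivalently the Paley-graph automorphism group;
\item the determinant-class computation of the scalars.
\end{itemize}
So all pairs $G/H_d\sqcup G/H_e$, including $d=e$ and $d=1$, are handled at once.

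Your route is more uniform. It shows that $p=13$ could simply be absorbed into Theorem~\ref{thm:large-prime}, since your counts confirm Lemma~\ref{lem:base-two-psl2} at $p=13$: $36$ of the $78$ conjugates of $D_{14}$ meet it, and the union bound for $A_4$ is $25/78<1$. The paper's route avoids re-checking the two-constituent synchronization, at the cost of a separate factorization check.

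One small correction: the absence of $S_4$ and $A_5$ does not follow from $13\equiv 1\pmod 4$; for example, $\PSL_2(17)$ contains $S_4$. The correct reason is that $8\nmid|G|$ and $5\nmid|G|=2^2\cdot 3\cdot 7\cdot 13$.
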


\begin{proof}
Put $G=\PSL_2(13)$. By Dickson's classification \cite[Theorem~A.1]{GuralnickZieve}, the maximal proper subgroups of $G$ are
\[C_{13}\rtimes C_6,\qquad D_{12},\qquad D_{14},\qquad A_4.\]

The coset actions corresponding to $D_{12}$ and $D_{14}$ have bases of size two. For $A_4$, there are $91$ conjugates, and at most $18$ of them meet $A_4$ nontrivially. Hence the action on $G/A_4$ also has a base of size two. The same holds for every subgroup contained in one of these groups.

Let $U\cong C_{13}$ and, for $d\mid 6$, put $H_d=U\rtimes C_d$. The action on $G/H_1$ has a base of size two. It remains to consider the actions corresponding to $H_2,H_3$, and $H_6$.

The action on $G/H_6\cong\PP^1(\FF_{13})$ is $3$-closed. Indeed, after fixing $\infty$, the orbits of the triples $(\infty,a,b)$ determine whether $b-a$ is a square. Thus a normalized element of the $3$-closure induces an automorphism of the Paley graph on $\FF_{13}$. Its automorphism group consists of the maps
\[x\longmapsto ux+v, \qquad u\in(\FF_{13}^{\times})^2,\quad v\in\FF_{13}\]
\cite[Theorem~9.1]{JonesPaley}. This is precisely the stabilizer of $\infty$ in $G$.

Now let $E\leq\FF_{13}^{\times}$ have order $4$ or $6$, and write $X_E=(\FF_{13}^2\setminus\{0\})/E$. These are the actions corresponding to $H_2$ and $H_3$. Let $\sigma\in G^{(3),X_E}$. Since $\det(v,w)=0$ if and only if $[v]$ and $[w]$ lie over the same point of $\PP^1(\FF_{13})$, the pair orbits recover the natural projection $X_E\longrightarrow\PP^1(\FF_{13})$. The induced permutation of the projective line belongs to $G$ by the preceding argument, so we may normalize it to be trivial.

The restriction of $\sigma$ to the fiber over a line $L$ is then multiplication by some $c_L\in\FF_{13}^{\times}/E$. For distinct lines $L$ and $M$, preservation of the determinant class gives $c_Lc_M=1$. Using three distinct lines, all $c_L$ are equal to some $c$ with $c^2=1$.

If $|E|=4$, then $\FF_{13}^{\times}/E$ has order $3$, so $c=1$. If $|E|=6$, suppose that $c$ is the nontrivial element of $\FF_{13}^{\times}/E$. Applying the definition of the $3$-closure to the triple $([e_1],[e_2],[e_1+e_2])$ yields an element of $G$ which agrees with $\sigma$ on this triple and therefore fixes the three corresponding projective points. Such an element is trivial, and hence cannot induce the nontrivial scalar class $c$. Thus $c=1$ in this case as well, and both actions are $3$-closed.

Every remaining pair of coset actions either contains an action with a base of size at most two, in which case Proposition~\ref{prop:base-two} applies, or has stabilizers among $H_2,H_3,H_6$. No two of these subgroups factorize $G$, so Proposition~\ref{prop:nonfactor} applies. The result follows from Proposition~\ref{prop:two-orbit}.
\end{proof}

\begin{theorem}\label{thm:psl219}
The group $\PSL_2(19)$ is totally $3$-closed.
\end{theorem}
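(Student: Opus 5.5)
The plan follows the template of Theorem~\ref{thm:large-prime}, isolating the single exception left open in Lemma~\ref{lem:base-two-psl2}. Put $G=\PSL_2(19)$, $m=9$, and $H_d=U\rtimes C_d$ for $d\mid 9$.

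\textbf{Step 1 (reduction).} By Lemma~\ref{lem:psl2-subgroups}, every proper subgroup lies in a torus normalizer, an exceptional subgroup, or some $H_d$. The following coset actions all have base size at most two by Lemma~\ref{lem:base-two-psl2}:
\begin{itemize}
\item[(a)] those with stabilizer contained in a torus normalizer;
\item[(b)] those with stabilizer contained in a copy of $A_4$ or $S_4$;
\item[(c)] those with stabilizer a proper subgroup of some $A_5$.
\end{itemize}
Subgroups of a Borel subgroup not containing $U$ lie in a split torus. So the only stabilizers needing attention are the $H_d$, $d\in\{1,3,9\}$, and the two conjugacy classes of $A_5$.

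By Proposition~\ref{prop:borel-fiber}, whose proof uses only that $p$ is prime and Carlitz's theorem, every union of $G/H_d$ and $G/H_e$ is $3$-closed. By Proposition~\ref{prop:base-two}, any union in which one constituent has base size at most two and the other is $3$-closed is $3$-closed. Hence, through Proposition~\ref{prop:two-orbit}, it remains to prove three things:
\begin{enumerate}
\item[(i)] the degree-$57$ action on $G/A_5$ is $3$-closed, for each of the two classes;
\item[(ii)] unions $G/A_5\sqcup G/A_5'$ are $3$-closed;
\item[(iii)] unions $G/A_5\sqcup G/H_d$ are $3$-closed.
\end{enumerate}

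\textbf{Step 2 (the degree-57 action).} For (i), I would first check directly whether $A_5\cap A_5^g=1$ for some $g$. This requires counting how many of the $57$ conjugates meet a fixed $A_5$ nontrivially. The union bound gave $361/171>1$, so it is inconclusive, and a finer count might still succeed. If an exact count (or a \GAP{} check) shows some conjugate meets $A_5$ trivially, then the base size is $2$ and (i) and (iii) follow from Lemma~\ref{lem:base-size} and Proposition~\ref{prop:base-two}.

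Otherwise I would use that $G$ acts on $57$ points as the automorphism group of the Perkel graph, which is $6$-regular on $57$ vertices with full automorphism group $\PSL_2(19)$. Adjacency is one $G$-orbit on pairs, so the $2$-closure, and hence the $3$-closure, lies in $\Aut(\text{Perkel})=G$. This gives (i) at once. This is the route I expect the authors take, given the mention of the Perkel graph in the introduction.

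\textbf{Step 3 (unions).} For (ii), with two copies of $A_5$ we have $|A_5|^2/|G|=3600/3420\notin\ZZ$, so $G\neq HK$ and Proposition~\ref{prop:nonfactor} applies. For (iii), first check whether $G=A_5H_d$ can hold. For $d=1,3$ the order $60\cdot 19d$ is at most $3420$; factorization would need $A_5\cap H_d$ of order $60\cdot19d/3420$, which is non-integral for $d=1,3$. Nonintegrality also excludes $d=9$. So Proposition~\ref{prop:nonfactor} applies throughout, provided each $G/H_d$ is $3$-closed, which is the case $d=e$ of Proposition~\ref{prop:borel-fiber}.

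\textbf{Main obstacle.} The main obstacle is Step~2: proving that the degree-$57$ action is $3$-closed. Concretely, this means identifying the Perkel graph as an orbital graph of $G/A_5$ and citing that its full automorphism group is $\PSL_2(19)$ and not larger, for both classes of $A_5$. The two classes are swapped by an outer automorphism, so it suffices to treat one.
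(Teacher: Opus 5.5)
\textbf{Gap in Step 3(iii): the factorization $G=A_5H_9$ does occur.} Your Steps 1, 2 and 3(ii) agree with the paper. Your fallback in Step~2 is in fact forced: the degree-$57$ action has subdegrees $1,6,20,30$, so its two-point stabilizers have orders $10,3,2$ and it never has a base of size two. The Perkel graph argument is what the paper uses.

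The problem is Step 3(iii). Your arithmetic is wrong: $60\cdot171/3420=3$ and $60\cdot57/3420=1$ are both integers, so nonintegrality excludes nothing for $d=3$ or $d=9$. For $d=9$ the factorization genuinely occurs. $H_9=N_G(U)$ is the stabilizer of a point of $\PP^1(\FF_{19})$, and $A_5\cap G_\infty$ has order dividing $\gcd(60,171)=3$. So every $A_5$-orbit on the $20$ projective points has length at least $20$. Hence $A_5$ is transitive on $\PP^1(\FF_{19})$, and $G=A_5H_9$ for every choice of conjugates.

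This means Proposition~\ref{prop:nonfactor} cannot be applied to $G/A_5\sqcup G/H_9$. Proposition~\ref{prop:base-two} does not help either, since neither constituent has a base of size two: on $\PP^1(\FF_{19})$ the two-point stabilizers have order $9$. For $d=3$ it is true that $G\neq A_5H_3$, but for a different reason than the one you gave. The group $A_5\cap H_3^g$ is the order-$3$ stabilizer in $A_5$ of the point fixed by $H_3^g$, so it is never trivial.

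\textbf{What the paper does instead.} It gives a direct coupling argument that treats $d=3$ and $d=9$ uniformly. Put $X=G/A_5$ and $Y=G/H_d$. Normalize $\sigma\in G^{(3),X\sqcup Y}$ so that it fixes $Y$ pointwise and acts on $X$ as some $h\in G$.
\begin{itemize}
\item The suborbit of length $20$ has two-point stabilizers of order $3$. Since the Sylow $3$-subgroup $C_9$ is cyclic, all subgroups of order $3$ in $G$ are conjugate. Hence every subgroup of order $3$ is a two-point stabilizer $G_{x_1,x_2}$ in $X$.
\item Each $G_y$ with $y\in Y$ contains a subgroup $P$ of order $3$. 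Choose $x_1,x_2\in X$ with $G_{x_1,x_2}=P$.
\item Applying the definition of the $3$-closure to $(x_1,x_2,y)$ gives $a\in G_y$ with $ah^{-1}\in G_{x_1,x_2}=P\leq G_y$. Hence $h\in G_y$.
\end{itemize}
This holds for every $y\in Y$, and the action on $Y$ is faithful, so $h=1$. Your proof needs an argument of this kind for the pairs $G/A_5\sqcup G/H_d$.
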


\begin{proof}
Put $G=\PSL_2(19)$, let $U\cong C_{19}$, and set $H_d=U\rtimes C_d$ for $d\mid 9$. By Lemmas~\ref{lem:psl2-subgroups} and~\ref{lem:base-two-psl2}, every proper subgroup of $G$ is either conjugate to some $H_d$, contained in a subgroup whose coset action has a base of size at most two, or isomorphic to $A_5$. The first two types are covered by Proposition~\ref{prop:borel-fiber} and by Lemma~\ref{lem:base-size} together with Proposition~\ref{prop:base-two}.

There are two conjugacy classes of $A_5$ subgroups, each giving a degree-$57$ coset action. It remains to show that these two actions are $3$-closed and to consider their unions with the actions on $G/H_3$ and $G/H_9$. The action on $G/H_1$ has a base of size two and is already covered by Proposition~\ref{prop:base-two}.

Let $X$ be the coset space corresponding to either conjugacy class of $A_5$ subgroups. Join two points of $X$ when their stabilizers intersect in a subgroup isomorphic to $D_{10}$. The resulting graph $\Gamma$ is the Perkel graph \cite[pp.~163--165]{VandenCruyce1985}. It has intersection array $\{6,5,2;1,1,3\}$ and full automorphism group $G$ \cite[Section~5.8.5, pp.~70--71]{AlfuraidanHall2009}. The edge set of $\Gamma$ is a union of $G$-orbits on pairs, so $G^{(2),X}\leq\Aut(\Gamma)=G$. Both degree-$57$ actions are therefore $2$-closed, and hence $3$-closed.

The diagonal action on any two degree-$57$ actions is $3$-closed by Proposition~\ref{prop:nonfactor}. Indeed, if $A,B\cong A_5$, then
\[\frac{|A||B|}{|G|}=\frac{60^2}{3420}=\frac{20}{19}\notin\ZZ,\]
so $G\neq AB$.

If one of the two coset actions has a base of size at most two, then their diagonal action is $3$-closed by Proposition~\ref{prop:base-two}. This applies in particular to the action on $G/H_1$, where $H_1\cong C_{19}$, since two distinct Sylow $19$-subgroups intersect trivially.

It remains to pair an $A_5$ action $X$ with $Y=G/H_d$, where $d=3$ or $9$. Both actions are $3$-closed. Let $\sigma\in G^{(3),X\sqcup Y}$. After composing with an element of $G$, we may assume that $\sigma$ fixes $Y$ pointwise and that its restriction to $X$ is induced by some $h\in G$.

The subdegrees of the degree-$57$ action are $1,6,20,30$, and the two-point stabilizers on the suborbit of length $20$ have order $3$ \cite[pp.~163--165]{VandenCruyce1985}. Thus every subgroup of order $3$ in $G$ is the stabilizer of two points of $X$.

For $y\in Y$, choose $P\leq G_y$ of order $3$ and points $x_1,x_2\in X$ such that $G_{x_1,x_2}=P$. Applying the definition of the $3$-closure to $(x_1,x_2,y)$ gives $a\in G$ such that
\[x_1^a=x_1^h,\qquad x_2^a=x_2^h,\qquad y^a=y.\]
Hence $a\in G_y$ and
\[ah^{-1}\in G_{x_1,x_2}=P\leq G_y,\]
so $h\in G_y$. Since this holds for every $y\in Y$ and the action on $Y$ is faithful, $h=1$. Therefore the action on $X\sqcup Y$ is $3$-closed.

The pairs of actions on $G/H_d\sqcup G/H_e$ are handled by Proposition~\ref{prop:borel-fiber}. Hence every two-constituent coset action is $3$-closed, and the result follows from Proposition~\ref{prop:two-orbit}.
\end{proof}

Combining the previous results, we obtain part (1) of Theorem~\ref{main}.

\begin{theorem}\label{thm:rank-one}
    Let $G = \PSL_2(q)$ be a finite nonabelian simple group. Then $G$ is totally $3$-closed if and only if $q\geq 7$ is a prime. 
\end{theorem}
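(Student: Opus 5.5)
The statement is an assembly of results already proved in this section, so I will argue by cases on $q$, noting first which $q$ give a nonabelian simple $\PSL_2(q)$: namely $q\geq 4$. I will handle the two directions separately.

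For the ``only if'' direction, suppose $q\geq 4$ is not a prime $\geq 7$. Then either $q=p^f$ with $f>1$, or $q=5$. If $f>1$, Proposition~\ref{prop:extension-field} shows that the natural action on $\PP^1(\FF_q)$ is not $3$-closed. This action is faithful, since $G$ is simple and point stabilizers are proper, so $G$ is not totally $3$-closed. The case $q=4$ is covered by the same proposition. If $q=5$, Proposition~\ref{prop:q5} applies directly.

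For the ``if'' direction, let $q=p\geq 7$ be prime. If $p\geq 17$ and $p\neq 19$, I invoke Theorem~\ref{thm:large-prime}. The remaining primes $p=7,11,13,19$ are exactly Theorems~\ref{thm:psl27}, \ref{thm:psl211}, \ref{thm:psl213} and~\ref{thm:psl219}.

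The only point requiring care is that these cases are exhaustive. The primes at least $7$ are $7,11,13,17,19,23,\ldots$, and each is covered by one of the results cited above. The non-primes $q\geq4$ are proper prime powers, and the only prime $q<7$ giving a simple group is $5$. I expect no genuine obstacle: the substantive work lies entirely in the earlier results, and this theorem is bookkeeping.
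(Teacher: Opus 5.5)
Your proposal is correct and follows the paper's proof: it assembles Theorems~\ref{thm:large-prime}, \ref{thm:psl27}, \ref{thm:psl211}, \ref{thm:psl213}, and~\ref{thm:psl219} for the primes $p\geq 7$. It excludes proper prime powers, including $q=4$, via Proposition~\ref{prop:extension-field} and $q=5$ via Proposition~\ref{prop:q5}, with $q=2,3$ set aside because $\PSL_2(q)$ is then not nonabelian simple.
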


\begin{proof}
Theorems~\ref{thm:large-prime}, \ref{thm:psl27}, \ref{thm:psl211}, \ref{thm:psl213}, and~\ref{thm:psl219} show that $\PSL_2(p)$ is totally $3$-closed for every prime $p\geq7$. Proposition~\ref{prop:extension-field} excludes every proper prime power, while Proposition~\ref{prop:q5} excludes $q=5$. Finally, $\PSL_2(2)$ and $\PSL_2(3)$ are not nonabelian simple.
\end{proof}

\section{The $\PSL_3(q)$ classification}\label{sec:psl3}

In this section, we prove that $\PSL_3(q)$ is totally $3$-closed when $q=3$ and when $q$ is a prime with $q\equiv 2\pmod 3$. The converse direction is proved in Section~\ref{sec:higher}.

\subsection{The case $q=3$}
We begin with $q=3$ and put $G=\PSL_3(3)$. Since $|Z(\SL_3(3))|=\gcd(3,3-1)=1$, we have $G=\SL_3(3)$. Moreover, $|G|=3^3(3^2-1)(3^3-1)=5616=2^4\cdot 3^3\cdot 13$, and $G$ is nonabelian simple \cite{AtlasFiniteGroups}. We first record the maximal subgroups of $G$. 

\begin{lemma}\label{lem:psl33-maximal}
Up to conjugacy, the maximal subgroups of $G$ are
\[P,\qquad P^*,\qquad J\cong C_{13}\rtimes C_3,
\qquad M\cong S_4,\]
where $P$ and $P^*$ are respectively a point stabilizer and a line stabilizer in $\operatorname{PG}(2,3)$. Their orders and indices are
\[
\begin{array}{c|cccc}
H & P & P^* & J & M\\
\hline
|H| & 432 & 432 & 39 & 24 \\
{[G:H]} & 13 & 13 & 144 & 234
\end{array}
\]
\end{lemma}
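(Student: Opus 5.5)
This lemma is essentially a citation of the ATLAS \cite{AtlasFiniteGroups}, which lists the maximal subgroups of $L_3(3)$ as $3^2:2S_4$ (two classes), $13:3$, and $S_4$. The plan is to quote that list and then check, by elementary arithmetic and linear algebra, that it matches the description in the statement.

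First, identify the two classes of index $13$ geometrically. $\operatorname{PG}(2,3)$ has $3^2+3+1=13$ points and $13$ lines, and $G=\SL_3(3)$ acts transitively on each. So the point stabilizer $P$ and the line stabilizer $P^*$ have order $5616/13=432=2^4\cdot 3^3$. Each stabilizer is the affine-type parabolic $3^2:\GL_2(3)$, and $\GL_2(3)\cong 2S_4$, so its order is $9\cdot 48=432$, as required. The inverse-transpose automorphism swaps points and lines, which explains why these two classes are interchanged by an outer automorphism and are not conjugate in $G$.

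Second, handle $J$ and $M$. For $J$, take a Singer cycle: $\FF_{27}^\times/\FF_3^\times$ is cyclic of order $13$, and it is normalized by the Frobenius map of order $3$. This gives $J\cong C_{13}\rtimes C_3$ of order $39$ and index $5616/39=144$. For $M\cong S_4$, take the monomial-type subgroup, namely the normalizer of the diagonal group $C_2^2$ in $\SL_3(3)$, which has shape $2^2:S_3\cong S_4$. Its order is $24$ and its index is $5616/24=234$.

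The main point needing justification is completeness and maximality, that is, that no other classes occur. I would not reprove this. Instead I would rely on the ATLAS, or equivalently on Bray--Holt--Roney-Dougal's tables for $\PSL_3(q)$ specialized to $q=3$. There the classes $\mathcal C_2$ (for $q=3$ this is exactly $S_4$), $\mathcal C_3$ (giving $13:3$) and $\mathcal C_1$ (the two parabolics) survive, while the remaining classes either do not exist or are non-maximal for $q=3$. If a self-contained check is wanted, a short \GAP{} computation of \texttt{ConjugacyClassesMaximalSubgroups(SL(3,3))} confirms the table.
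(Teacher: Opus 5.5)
Your proposal is correct and follows the same route as the paper. Both quote the ATLAS list ($3^2{:}2S_4$ twice, $13{:}3$, $S_4$), identify the two index-$13$ classes with the point and line stabilizers of $\operatorname{PG}(2,3)$, and read off orders and indices from $|G|=5616$; your explicit Singer-cycle and monomial models of $J$ and $M$ are harmless extras. One small bookkeeping correction to your optional cross-check: in Bray--Holt--Roney-Dougal the $\mathcal C_2$ entry $(q-1)^2{:}S_3$ carries the condition $q\geq 5$, and for $q=3$ the maximal $S_4$ is recorded instead as the $\mathcal C_8$ subgroup $\SO_3(3)$. That subgroup is exactly your monomial group, since signed permutation matrices preserve $x_1^2+x_2^2+x_3^2$, so your list of subgroups is unaffected.
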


\begin{proof}
The maximal subgroups of $G$ consist of two conjugacy classes of subgroups of type $3^2{:}2S_4$, one class of type $13{:}3$, and one class of type $S_4$ \cite{AtlasOnline}. The two subgroups of order $432$ are respectively the point and line stabilizers in $\operatorname{PG}(2,3)$. The displayed orders and indices now follow from $|G|=5616$.
\end{proof}

We first treat the $J$ and $M$ cases by counting conjugates. 

\begin{lemma}\label{lem:psl33-low-base}
If $H$ is contained in a conjugate of $J$ or $M$, then the action of $G$ on $G/H$ has a base of size at most two.
\end{lemma}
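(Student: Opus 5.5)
It suffices to show that $J$ and $M$ each have a conjugate meeting them trivially. Indeed, if $H\leq L$ and $L\cap L^g=1$, then $H\cap H^g\leq L\cap L^g=1$, so the coset action on $G/H$ has the base $(H,Hg)$ of size two. For a proper subgroup $H$ that is not core-free there is nothing to check, since $G$ is simple. So the plan reduces to the two maximal subgroups $L\in\{J,M\}$, and I will use the same counting and union-bound approach as in Lemma~\ref{lem:base-two-psl2}.

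If $L\cap L^g\neq1$, then $L\cap L^g$ contains a subgroup of prime order. For each $G$-class $\mathcal C$ of prime-order subgroups, let $n_{\mathcal C}$ be the number of members of $\mathcal C$ lying in $L$. Exactly as before, a union bound gives
\[\Pr_g(L\cap L^g\neq1)\leq\sum_{\mathcal C}\frac{n_{\mathcal C}^2}{|\mathcal C|},\]
and it suffices to show that this sum is less than $1$. The class sizes come from the ATLAS for $\PSL_3(3)$, with centralizer orders $|C(2A)|=48$, $|C(3A)|=54$, $|C(3B)|=9$, and $|C(13A)|=13$.
\begin{enumerate}
\item[(i)] Involutions form one class of $117$ cyclic subgroups.
\item[(ii)] Subgroups of order $3$ come in two classes, of sizes $104/2=52$ (class $3A$) and $624/2=312$ (class $3B$).
\item[(iii)] The Sylow $13$-subgroups number $144$.
\end{enumerate}

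For $J\cong C_{13}\rtimes C_3$, there is one subgroup of order $13$ and $13$ subgroups of order $3$. The elements of order $3$ in the Frobenius group $13{:}3$ act fixed-point-freely on the $C_{13}$, so their centralizers are small. This suggests they lie in class $3B$, although the bound below holds without this identification. The contributions are
\[\frac{1}{144}+\frac{13^2}{312}<1\qquad\text{or, in the worst case,}\qquad \frac{1}{144}+\frac{169}{52}.\]
The worst case exceeds $1$, so I have to pin down the class. Since $|C_G(x)|=9$ for $x\in3B$ and $|C_G(x)|=54$ for $x\in3A$, I will check that $C_G(x)$ for $x\in J$ of order $3$ contains no involution. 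More decisively, I will use the fact that a $3A$ element is a transvection fixing a line pointwise. In $J$, the $C_3$ arises from the field automorphism of $\FF_{27}$ acting on the Singer cycle, so it fixes only a subplane's worth of points and is not a transvection. Hence it is $3B$, and the sum is $1/144+169/312<1$.

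For $M\cong S_4$, there are $9$ involutions, all in the single class, $4$ subgroups of order $3$, and no elements of order $13$. The involution term alone is $81/117$. The order-$3$ term is $16/312$ if those subgroups are $3B$, which gives a total below $1$, but $16/52$ if they are $3A$, which gives a total above $1$. So again the main obstacle is identifying the class of the $3$-elements of $M$. I expect $M$ to be the orthogonal group $\SO_3(3)\cong S_4$. There a $3$-element is unipotent with a single Jordan block of size $3$ in the $3$-dimensional module, hence not a transvection, hence $3B$, and the bound $81/117+16/312<1$ follows. If this identification fails, my fallback is to exhibit an explicit $g$ by hand, or to refine the count by bounding directly the number of conjugates $M^g$ that meet $M$ (there are $234$ conjugates) via the normalizers of the relevant subgroups: each involution $t\in M$ lies in exactly $|C_G(t)|\cdot\#\{\text{involutions in } M\}/|M|\cdot(\ldots)$ conjugates of $M$, and similarly for order-$3$ subgroups. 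This yields at most a modest number of conjugates that meet $M$, less than $234$.
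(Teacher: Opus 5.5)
Your argument is the same double count as the paper's. The paper counts, for each nonidentity $x\in L$, the conjugates of $L$ containing $x$, and discards $L$ itself; this gives $26\cdot5=130<143$ for $J$ and $9\cdot17+8\cdot2=169<233$ for $M$. Your union bound organizes the same count by prime-order subgroups and does not discard $g\in N_G(L)=L$. The real difference is how you obtain the key input, the fusion of the $3$-elements. The paper reads it off from the \GAP{} character table library: the $3$-elements of $J$ and $M$ lie in the class of size $624$. You instead derive it by identifying $3A$ with the $104$ transvections and $3B$ with the regular unipotent elements. This is a nice self-contained substitute, but two steps need repair.

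First, the Frobenius $\sigma\colon x\mapsto x^3$ on $\FF_{27}$ does not fix a subplane. It is unipotent and its fixed vectors are exactly $\FF_3$, so it fixes a single point of $\operatorname{PG}(2,3)$ and is a single Jordan block. That is the correct reason it is not a transvection. Sylow's theorem in $J=T\rtimes\langle\sigma\rangle$ then shows that all thirteen subgroups of order $3$ in $J$ are conjugate to $\langle\sigma\rangle$.

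Second, $M\cong\SO_3(3)$ is only asserted. It can be proved. An $S_4$ inside a parabolic $3^2{:}\GL_2(3)$ meets the normal $3^2$ trivially, since $O_3(S_4)=1$. It would therefore embed in $\GL_2(3)$, whose only subgroup of index two is $\SL_2(3)$. Hence every $S_4\leq G$ is maximal and, the class being unique, conjugate to $\SO_3(3)$.

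For $M$, however, you do not need the fusion at all. Your worst case equals $9/13+4/13=1$ exactly, not more. In that sum the $24$ elements $g\in M$ are counted thirteen times, so removing twelve of those counts gives at most $222/234<1$. Equivalently, at most $13\cdot17=221<233$ other conjugates meet $M$. This is your fallback carried out, and it works even in the $3A$ case. The fusion is indispensable only for $J$, where the $3A$ alternative would give $13\cdot35=455>143$.
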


\begin{proof}
It suffices to find $g\in G$ such that
\[J\cap J^g=1 \qquad\text{and}\qquad M\cap M^g=1.\]

There are $144$ conjugates of $J$. Let $R\cong C_{13}$ be its normal Sylow $13$-subgroup. Every nonidentity element of $R$ lies in a unique conjugate of $J$, since $N_G(R)=J$. The other $26$ elements of $J$ have order $3$ and lie in a conjugacy class of size $624$ \cite[table \texttt{L3(3)} and the fusion from \texttt{13:3}]{CTblLib}. Hence each lies in $\frac{144\cdot 26}{624}=6$ conjugates of $J$. Therefore at most $26(6-1)=130<143$ other conjugates meet $J$ nontrivially. Thus $J\cap J^g=1$ for some $g\in G$.

There are $234$ conjugates of $M$. The group $M\cong S_4$ contains nine involutions and eight elements of order $3$. These lie in conjugacy classes of sizes $117$ and $624$, respectively \cite[table \texttt{L3(3)} and the fusion from \texttt{s4}]{CTblLib}. Thus an involution lies in $\frac{234\cdot 9}{117}=18$ conjugates of $M$, while an element of order $3$ lies in $\frac{234\cdot 8}{624}=3$ conjugates. Every nontrivial subgroup of $S_4$ contains an element of order $2$ or $3$, so at most $9(18-1)+8(3-1)=169<233$ other conjugates meet $M$ nontrivially. Hence $M\cap M^g=1$ for some $g\in G$.

Finally, if $H\leq J$ or $H\leq M$, the same choice of $g$ gives $H\cap H^g=1$. Therefore the action on $G/H$ has a base of size at most two.
\end{proof}

We next treat the point and line actions. 

\begin{lemma}\label{lem:psl33-projective}
The natural actions of $G$ on the points and lines of $\operatorname{PG}(2,3)$ are $3$-closed.
\end{lemma}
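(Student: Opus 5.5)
The plan is to copy the Fano-plane argument from Theorem~\ref{thm:psl27}: show that the $3$-closure preserves collinearity, and then use the fundamental theorem of projective geometry. Let $\Omega$ be the set of $13$ points of $\operatorname{PG}(2,3)$ and let $\sigma\in G^{(3),\Omega}$. The group $G=\SL_3(3)=\PGL_3(3)$ acts transitively on ordered triples of distinct collinear points. Indeed, any two distinct points can be taken to $\langle e_1\rangle,\langle e_2\rangle$. A third point on their line is $\langle e_1+ce_2\rangle$ with $c\neq0$, and the stabilizer contains $\mathrm{diag}(1,c^{-1},c)$, which moves it to $\langle e_1+e_2\rangle$. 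Similarly, $G$ is transitive on ordered noncollinear triples. So the $G$-orbits on ordered triples of distinct points are exactly ``collinear'' and ``noncollinear'' (together with the orbits on degenerate triples). Since $\sigma$ preserves every $G$-orbit on $\Omega^3$, it maps collinear triples to collinear triples. Each line has $4\geq3$ points, so $\sigma$ maps lines onto lines and is a collineation of $\operatorname{PG}(2,3)$.

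By the fundamental theorem of projective geometry, the collineation group of $\operatorname{PG}(2,3)$ is $\PGammaL_3(3)$. Since $\FF_3$ has no nontrivial automorphisms, this equals $\PGL_3(3)=G$. Hence $\sigma\in G$, and the point action is $3$-closed.

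For the line action, the same argument applies with a triple of distinct lines called ``concurrent'' if the three lines pass through a common point. Alternatively, apply the inverse-transpose automorphism of $\SL_3(3)$. This automorphism interchanges the point and line actions, so it carries $3$-closures to $3$-closures, and the line action is $3$-closed as well.

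The only step needing care is the transitivity on ordered collinear and noncollinear triples, which is needed so that collinearity is exactly a union of orbits. In fact only the weaker statement ``collinearity is $G$-invariant'' is used, and that holds trivially. So the argument contains no real obstacle: it only requires citing the fundamental theorem of projective geometry for the plane over the prime field.
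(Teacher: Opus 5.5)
Your proof is correct and follows the paper's argument. Elements of the $3$-closure preserve collinearity, and you rightly note that only $G$-invariance of collinearity is needed. Hence they are collineations, which by the fundamental theorem of projective geometry lie in $\PGammaL_3(3)=\PGL_3(3)=G$, and the line action follows by duality.
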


\begin{proof}
Consider first the action on points. Two ordered triples of pairwise distinct points lie in the same $G$-orbit if and only if they are either both collinear or both noncollinear. Since $G$ is $2$-transitive, the $G$-orbits on triples with at most two distinct entries are determined by which coordinates are equal.

Thus every element of the $3$-closure preserves collinearity, and hence acts on $\operatorname{PG}(2,3)$ as a collineation. By the fundamental theorem of projective geometry, every collineation is induced by a semilinear transformation \cite[Theorem~1.9.1]{BallFiniteGeometry}. Since $\Aut(\FF_3)=1$ and $\PGL_3(3)=\PSL_3(3)=G$, the point action is $3$-closed. The same argument applied to the dual projective plane proves that the line action is $3$-closed.
\end{proof}

It remains to treat the subgroups of $P$ and $P^*$ whose coset actions do not have a base of size two. Fix a point stabilizer $P$ and put
\[\mathcal E(P)=\{H<P:H\cap H^g\neq1\text{ for every }g\in G\}.\]
Thus $H<P$ lies outside $\mathcal E(P)$ if and only if the action on $G/H$ has a base of size at most two.

\begin{proposition}\label{prop:psl33-E3}
The set $\mathcal E(P)$ contains exactly twenty-four subgroups, forming six $G$-conjugacy classes represented by subgroups of orders
\[48,\qquad54,\qquad72,\qquad108,\qquad144,\qquad216.\]
Each corresponding coset action is $3$-closed. The same holds for the six dual classes contained in a line stabilizer.
\end{proposition}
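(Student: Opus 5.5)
The plan has two parts. First identify $\mathcal E(P)$ exactly, then prove $3$-closure for each of the six resulting coset actions. For the identification, take $P$ to be the stabilizer of the point $\langle e_1\rangle$. Then $P\cong 3^2{:}\GL_2(3)$, with $Q=O_3(P)\cong C_3^2$ the group of elations/translations and Levi complement $L\cong\GL_2(3)$. We do not expect a conceptual description of the membership condition and plan to enumerate it directly. There is one structural reduction that helps. If $H\cap Q=1$, we expect a conjugate $H^g$ whose intersection with $H$ is trivial, because distinct point stabilizers meet in a parabolic of a flag and the Levi parts can be moved apart. So members of $\mathcal E(P)$ should contain $Q$ or at least meet it nontrivially. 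That still leaves too many subgroups to check reliably by hand, and the paper already allows one exhaustive \GAP{} computation for $\PSL_3(3)$. We will use it here: run over the conjugacy classes of subgroups of $P$ and test $H\cap H^g\neq1$ for all double coset representatives $g\in H\backslash G/H$. The expected output is the six classes, of orders $48,54,72,108,144,216$, containing twenty-four subgroups of $P$ in total. The dual statement for $P^*$ then follows by applying the inverse-transpose automorphism of $G$, which exchanges $P$ and $P^*$ and preserves both $\mathcal E$ and $3$-closedness.

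For the $3$-closedness of each $G/H$ with $H\in\mathcal E(P)$, we will imitate the fiber arguments of Proposition~\ref{prop:borel-fiber} and Theorem~\ref{thm:psl213}. Every such $H$ lies in $P$, so $G/H$ maps $G$-equivariantly onto $G/P$, the set of points of $\operatorname{PG}(2,3)$. Two points of $G/H$ lie in the same fiber exactly when their stabilizers lie in a common point stabilizer. This is a union of $G$-orbits on pairs, so any $\sigma\in G^{(3),G/H}$ permutes the fibers. Given three points of $G/H$ lying over three distinct projective points, the $G$-orbit of the triple records whether those projective points are collinear. Hence the induced permutation $\tau$ of $\operatorname{PG}(2,3)$ preserves collinearity, and by Lemma~\ref{lem:psl33-projective} it lies in $G$. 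After composing with an element of $G$ we may therefore assume $\tau=1$, so $\sigma$ preserves every fiber.

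Each fiber is a coset space $P/H$ of size $[P:H]\in\{9,8,6,4,3,2\}$, and $\sigma$ acts on it by some permutation $\sigma_x$. Mixed pairs and triples spread across two or three distinct fibers constrain the $\sigma_x$ jointly. Take a frame of four points in general position; its pointwise stabilizer in $G=\PGL_3(3)$ is trivial. The aim is to show that the orbits of triples meeting two fibers over $x\neq y$ force $\sigma_x$ and $\sigma_y$ to be induced by a single element of $G_{x,y}$ acting on both fibers. Propagating this around the frame then forces the whole family $(\sigma_x)$ to come from one element $g$ with $\tau_g=1$, hence $g=1$. This is the analogue of the $c_Lc_M=1$ step.

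We expect this last compatibility step to be the main obstacle. In particular, for $H$ of order $216$ or $144$, with fibers of size $2$ or $3$, a coherent fiberwise swap might be invisible to every triple orbit, as happened for the semilinear obstruction. If the uniform argument fails for some class, we will verify $3$-closedness of that action directly with the same \GAP{} computation, computing the $3$-closure of the permutation group of degree $13\cdot[P:H]\le 117$.
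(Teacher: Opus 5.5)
Once you invoke your \GAP{} fallback, your plan is the paper's proof: the double-coset test determines $\mathcal E(P)$, a machine computation gives the point stabilizer of $G^{(3),G/H}$ for each of the six classes, and duality comes from $g\mapsto (g^{-1})^{\mathsf T}$. However, two claims you make along the way are false, and each would break the argument if you relied on it.

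First, the ``structural reduction'' to subgroups meeting $Q$ is wrong. Let $H\le P$ have order $48$. Then $|H\cap Q|\in\{1,3\}$, and $|H\cap Q|=3$ is impossible. Since $Q$ is abelian, $HQ$ normalizes $H\cap Q$. So $HQ/Q$, a Sylow $2$-subgroup $SD_{16}\geq Q_8$ of $\GL_2(3)$, would stabilize the line $H\cap Q$ of $Q\cong\FF_3^2$, whereas $Q_8$ acts irreducibly. Hence the order-$48$ class of $\mathcal E(P)$ consists of the nine complements of $Q$ in $P$. These are the Levi subgroups $\cong\GL_2(3)$ stabilizing antiflags, and all of them satisfy $H\cap Q=1$. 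Restricting the search to subgroups meeting $Q$ would return fifteen subgroups in five classes. The test must run over all subgroups of $P$, or all $P$-classes counted with their sizes, as in the paper.

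Second, the compatibility step of your fiber argument cannot work as formulated. Agreement on each pair of fibers with an element of $G_{x,y}$ does not propagate to a single element of $G$. Take $|H|=216$. Then $H=Q\rtimes\SL_2(3)$ is the stabilizer of the vector $e_1$, and $G/H$ is the set of nonzero vectors of $\FF_3^3$. The permutation $\sigma\colon v\mapsto -v$ fixes every fiber. On the fibers over any $x=\langle u\rangle\neq y=\langle w\rangle$, it agrees with the element of $G_{x,y}$ acting as $-1$ on $u$ and $w$ and as $+1$ on a third basis vector. It even agrees with an element of $G$ on every triple lying over a collinear set of points. Yet $\sigma\notin G$, since $-I\notin\SL_3(3)$. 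It is detected only by triples over three noncollinear fibers. The $G$-orbit of such a triple is recorded by the determinant, which changes by $(-1)^3=-1$. This is the mechanism of Proposition~\ref{prop:psl3-scalar-fiber}, whose proof in fact works verbatim at $p=3$ for this class. A hand proof therefore has to extract information from three-fiber triples, and your sketch gives no such argument for the other five classes. As written, your $3$-closedness claim rests entirely on the computational fallback, just as in the paper.
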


\begin{proof}
This proposition is established by an exhaustive computation in \GAP{} \cite{GAP4}, the only such computation in the paper. The complete program is reproduced, together with its output, in Appendix~\ref{app:gap}. The program constructs $G$ in its degree-$13$ point action and enumerates all $646$ subgroups of $P$ by cyclic extension.

For each of the $645$ proper subgroups $H<P$, the program tests one representative of every double coset in $H\backslash G/H$. Since $|H\cap H^g|$ is constant as $g$ ranges over a double coset $HgH$, this test is exhaustive. Of these subgroups, $621$ have a conjugate intersecting $H$ trivially. Conjugacy tests in $G$ show that the remaining twenty-four form six $G$-conjugacy classes, one for each of the orders in the statement.

Let $\Omega=G/H$, let $x=H\in\Omega$ be the point corresponding to the trivial coset, and let $A_x$ be the group of permutations of $\Omega$ that fix $x$ and preserve every $G_x$-orbit on $\Omega\times\Omega$. Every element of $G^{(3),\Omega}$ fixing $x$ preserves each $G$-orbit of triples with first entry $x$, and therefore lies in $A_x$. The program determines $A_x$ by exact backtracking and then removes every element that fails to preserve some $G$-orbit on $\Omega^3$:
\[
\begin{array}{c|rrrrrr}
|H|&48&54&72&108&144&216\\ \hline
\#\{K\in\mathcal E(P):|K|=|H|\}&9&4&3&4&3&1\\
|A_x|&48&54&72&108&288&1944\\
|(G^{(3),\Omega})_x|&48&54&72&108&144&216
\end{array}
\]
For the first four classes, we already have $A_x=G_x=H$. For the final two classes, the group $A_x$ is larger, but every element of $A_x\setminus G_x$ fails to preserve some $G$-orbit on $\Omega^3$. Hence $(G^{(3),\Omega})_x=H$ in all six cases.

Since $G\leq G^{(3),\Omega}$, the group $G^{(3),\Omega}$ is transitive. Therefore
\[|G^{(3),\Omega}|=|\Omega|\,|(G^{(3),\Omega})_x|=|G:H|\,|H|=|G|.\]
Thus $G^{(3),\Omega}=G$.

Finally, the automorphism $g\mapsto(g^{-1})^{\mathsf T}$ interchanges point and line stabilizers, giving the six dual classes.
\end{proof}

We now turn to diagonal actions on two coset spaces. 

\begin{lemma}\label{lem:psl33-sync}
Let $\Omega_1$ and $\Omega_2$ be faithful transitive $3$-closed $G$-sets. Suppose that there are surjective $G$-maps $\rho_i:\Omega_i\longrightarrow Y_i$, where each $Y_i$ is either the point set or the line set of $\operatorname{PG}(2,3)$. Then the diagonal action of $G$ on $\Omega_1\sqcup\Omega_2$ is $3$-closed.
\end{lemma}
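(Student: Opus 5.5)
Take $\sigma\in G^{(3),\Omega_1\sqcup\Omega_2}$. By \cite[Lemma~2.1]{FreedmanGiudiciPraeger2024}, $\sigma$ preserves $\Omega_1$ and $\Omega_2$. By \cite[Lemma~2.2]{FreedmanGiudiciPraeger2024}, the restrictions $\sigma|_{\Omega_i}$ lie in $G^{(3),\Omega_i}=G$. So $\sigma|_{\Omega_1}=g_1$ and $\sigma|_{\Omega_2}=g_2$ for some $g_1,g_2\in G$. After composing with $g_1^{-1}$ we may assume $\sigma$ fixes $\Omega_1$ pointwise and acts on $\Omega_2$ as $h=g_2g_1^{-1}$. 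It then suffices to prove $h=1$; faithfulness on $\Omega_2$ then gives $\sigma=1$.

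The key step is to show that $h$ fixes every point of $Y_2$, using mixed pairs and triples.

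\emph{Case $Y_1=Y_2$.} Take $\omega_1\in\Omega_1$ and $\omega_2\in\Omega_2$ with $\rho_1(\omega_1)=\rho_2(\omega_2)$. The $G$-orbit of the pair $(\omega_1,\omega_2)$ is preserved by $\sigma$, and the relation $\rho_1(\omega_1)=\rho_2(\omega_2)$ is $G$-invariant. Hence $\rho_2(\omega_2^h)=\rho_1(\omega_1)=\rho_2(\omega_2)$. Since $\rho_1$ is surjective, every point of $Y_2$ arises this way, so $h$ acts trivially on $Y_2$.

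\emph{Case $Y_1\neq Y_2$,} say points and lines. Here incidence $\rho_1(\omega_1)\in\rho_2(\omega_2)$ is $G$-invariant, so $h$ maps each line $\rho_2(\omega_2)$ to a line through the same points $\rho_1(\omega_1)$. Ranging over all points, the induced permutation of lines fixes every line: a line is determined by the set of points incident with it.

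In either case $h$ lies in the kernel of the action of $G$ on $Y_2$. This action is faithful because $G=\PSL_3(3)=\PGL_3(3)$ acts faithfully on points and on lines. Hence $h=1$.

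The main obstacle is justifying that a pair relation forces the value of $\rho_2(\omega_2^h)$. The definition of the $3$-closure applied to the triple $(\omega_1,\omega_1,\omega_2)$ gives $s\in G$ with $\omega_1^s=\omega_1$ and $\omega_2^s=\omega_2^h$. Then $\rho_2(\omega_2^h)=\rho_2(\omega_2)^s$ and $\rho_1(\omega_1)^s=\rho_1(\omega_1)$. Since equality and incidence are preserved by $s$, the required relation with $\rho_1(\omega_1)$ is transferred to $\omega_2^h$. This step uses only that $\rho_1,\rho_2$ are $G$-maps, and it completes the argument.
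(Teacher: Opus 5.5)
Your proposal is correct and follows the paper's proof essentially step for step. Both normalize $\sigma$ to fix $\Omega_1$ pointwise, then use the $G$-invariant equality or incidence relation on $\Omega_1\times\Omega_2$ (via Lemma~2.1 of Freedman--Giudici--Praeger, which you unpack with the triple $(\omega_1,\omega_1,\omega_2)$) to force the residual element to fix $Y_2$, and finish by faithfulness of $G$ on points or lines. One small tidy-up: in the equal-type case, the fact that every $y\in Y_2$ is reached also needs surjectivity of $\rho_2$ (or transitivity), not only of $\rho_1$.
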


\begin{proof}
Let $\sigma\in G^{(3),\Omega_1\sqcup\Omega_2}$. By \cite[Lemma~2.2]{FreedmanGiudiciPraeger2024}, its restrictions to $\Omega_1$ and $\Omega_2$ lie in the corresponding $3$-closures. Thus they are induced by elements $a,b\in G$. After composing with the diagonal action of $a^{-1}$, we may assume that $\sigma$ fixes $\Omega_1$ pointwise and acts on $\Omega_2$ as some $t\in G$.

By \cite[Lemma~2.1]{FreedmanGiudiciPraeger2024}, $\sigma$ preserves every $G$-orbit on $\Omega_1\times\Omega_2$. If $Y_1$ and $Y_2$ are of the same type, apply this to the $G$-invariant relation $\rho_1(x)=\rho_2(y)$. Since $\rho_1$ and $\rho_2$ are surjective, it follows that $t$ fixes every point of $Y_2$.

If $Y_1$ and $Y_2$ are of opposite types, consider the $G$-invariant relation defined by requiring the point $\rho_1(x)$ to lie on the line $\rho_2(y)$, or vice versa. Since $\sigma$ fixes $\Omega_1$ pointwise, it follows that $t$ preserves every point-line incidence involving an element of $Y_1$. Surjectivity of $\rho_1$ and $\rho_2$ then implies that $t$ fixes every element of $Y_2$.

The point and line actions of $G$ are faithful, so $t=1$. Hence $\sigma$ is induced by an element of $G$, and the diagonal action is $3$-closed.
\end{proof}

\begin{theorem}\label{thm:psl33-main}
    The group $G = \PSL_3(3)$ is totally $3$-closed. 
\end{theorem}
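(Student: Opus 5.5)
By Proposition~\ref{prop:two-orbit}, we must show that for every pair of proper subgroups $H,K<G$ the diagonal action on $G/H\sqcup G/K$ is $3$-closed. First we sort every proper subgroup $H$ into one of three types. Either $H$ lies in a conjugate of $J$ or $M$, or $H$ lies in a conjugate of $P$ or $P^*$ but not in $\mathcal E(P)$ or its dual, or $H$ is (conjugate to) $P$, $P^*$, or a member of $\mathcal E(P)$ or its dual. Lemma~\ref{lem:psl33-maximal} guarantees that every proper subgroup lies in some maximal subgroup. In the first two types the coset action has a base of size at most two, by Lemma~\ref{lem:psl33-low-base} and by the definition of $\mathcal E(P)$. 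Hence these actions are $3$-closed by Lemma~\ref{lem:base-size}. In the third type, the actions are $3$-closed by Lemma~\ref{lem:psl33-projective} and Proposition~\ref{prop:psl33-E3}. So every transitive constituent is $3$-closed.

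Next we handle the unions. If either $H$ or $K$ is of one of the first two types, Proposition~\ref{prop:base-two} applies directly. This uses the $3$-closedness of both constituents just established.

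It remains to treat the case where both $H$ and $K$ are of the third type. The key observation is that every such $H$ lies in a point stabilizer or a line stabilizer. Fix one conjugate $P_0$ of $P$ or $P^*$ containing $H$. The map $Hg\mapsto P_0g$ is then a surjective $G$-map from $G/H$ onto $G/P_0$, which is the point set or line set of $\operatorname{PG}(2,3)$. Doing the same for $K$, we obtain the hypotheses of Lemma~\ref{lem:psl33-sync}, with both constituents faithful, transitive and $3$-closed. That lemma then gives $3$-closedness of $G/H\sqcup G/K$. This includes the cases $H=K$ and $H$, $K$ of opposite (point/line) types.

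The only real care point is the bookkeeping in the first step. We must confirm that every subgroup of a conjugate of $P$ is either conjugate into $\mathcal E(P)$ or has a base of size two. This holds by definition, since $\mathcal E(P)$ is defined for subgroups of $P$ and conjugation preserves base size. We must also check that $P$ and $P^*$ themselves are covered by Lemma~\ref{lem:psl33-projective}. Dual subgroups of $P^*$ are handled by the final sentence of Proposition~\ref{prop:psl33-E3}. With these points checked, Proposition~\ref{prop:two-orbit} yields that $G$ is totally $3$-closed.
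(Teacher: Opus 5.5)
Your proposal is correct and follows essentially the same route as the paper. You sort each proper subgroup, via Lemma~\ref{lem:psl33-maximal}, into either a base-size-at-most-two case (by Lemma~\ref{lem:psl33-low-base} and the definition of $\mathcal E(P)$) or a point/line/exceptional parabolic case (by Lemma~\ref{lem:psl33-projective} and Proposition~\ref{prop:psl33-E3}), and then conclude with Proposition~\ref{prop:base-two} or Lemma~\ref{lem:psl33-sync}, followed by Proposition~\ref{prop:two-orbit}; your explicit map $Hg\mapsto P_0g$ simply spells out the surjective $G$-maps that the paper asserts without detail.
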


\begin{proof}
Let $H,K<G$ be proper subgroups. By Lemma~\ref{lem:psl33-maximal}, each is contained in a conjugate of $P$, $P^*$, $J$, or $M$.

If $H$ is contained in a conjugate of $J$ or $M$, then the action on $G/H$ has a base of size at most two by Lemma~\ref{lem:psl33-low-base}. The same holds if $H$ is contained in a conjugate of $P$ or $P^*$ but does not belong to one of the exceptional classes in Proposition~\ref{prop:psl33-E3}. Otherwise, the action on $G/H$ is the point action, the line action, or one of the exceptional parabolic actions. These actions are $3$-closed by Lemmas~\ref{lem:base-size} and~\ref{lem:psl33-projective} and Proposition~\ref{prop:psl33-E3}. The same conclusions hold for $K$.

If either action has a base of size at most two, then the diagonal action on $G/H\sqcup G/K$ is $3$-closed by Proposition~\ref{prop:base-two}. Otherwise, both actions admit surjective $G$-maps onto the point set or the line set of $\operatorname{PG}(2,3)$, so Lemma~\ref{lem:psl33-sync} applies.

Thus the diagonal action on $G/H\sqcup G/K$ is $3$-closed for every pair of proper subgroups $H,K<G$. The result follows from Proposition~\ref{prop:two-orbit}.
\end{proof}

\subsection{The primes $p\equiv 2\pmod 3$} In this subsection, we prove that if $p$ is a prime with $p\equiv 2\pmod 3$, then $\PSL_3(p)$ is totally $3$-closed. The remaining prime power cases are treated together with higher ranks in Section~\ref{sec:higher}.

For $p=2$, the result follows from $\PSL_3(2)\cong\PSL_2(7)$ and Theorem~\ref{thm:psl27}. Throughout the remainder of this subsection, $p$ denotes a prime with $p\geq 5$ and $p\equiv 2\pmod3$. Since $|Z(\SL_3(p))|=\gcd(3,p-1)=1$, we have $G=\PSL_3(p)=\SL_3(p)$.

We begin with a rigidity statement for the scalar-fiber actions.

\begin{proposition}\label{prop:psl3-scalar-fiber}
Let $p\geq 5$ be a prime with $p\equiv2\pmod3$, let $G=\PSL_3(p)=\SL_3(p)$, and let $C\leq\FF_p^\times$. Put
\[\Omega_C=(\FF_p^3\setminus\{0\})/C.\]
Then the natural action of $G$ on $\Omega_C$ is faithful and $3$-closed.
\end{proposition}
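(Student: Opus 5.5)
Following the template of Proposition~\ref{prop:borel-fiber}, we first recover the projective quotient, then show the induced collineation lies in $G$, and finally rigidify the scalar classes on the fibers.

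\emph{Faithfulness.} The kernel of the action on $\Omega_C$ is a normal subgroup of the simple group $G$. It is proper, since $G$ moves points of $\PP^2(\FF_p)$. Hence it is trivial.

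\emph{The projective quotient.} Let $\pi:\Omega_C\to\PP^2(\FF_p)$ be the projection. For $[v],[w]\in\Omega_C$ we have $\pi[v]=\pi[w]$ if and only if $v,w$ are linearly dependent, and this relation is $G$-invariant. Let $\sigma\in G^{(3),\Omega_C}$. By \cite[Lemma~2.1]{FreedmanGiudiciPraeger2024}, $\sigma$ preserves every $G$-orbit on pairs, so it preserves this relation and induces a permutation $\tau$ of $\PP^2(\FF_p)$.

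\emph{$\tau$ is a collineation in $G$.} Three points of $\Omega_C$ with distinct projective images are collinear exactly when $\det(v_1,v_2,v_3)=0$. This is a union of $G$-orbits on triples, so $\tau$ preserves collinearity. By the fundamental theorem of projective geometry \cite[Theorem~1.9.1]{BallFiniteGeometry}, and since $\Aut(\FF_p)=1$, $\tau\in\PGL_3(p)$. Because $\gcd(3,p-1)=1$, we have $\PGL_3(p)=\PSL_3(p)=G$. Composing with an element of $G$, we may therefore assume $\tau=1$.

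\emph{Scalar rigidity.} Now $\sigma$ preserves each fiber $\pi^{-1}(L)\cong\FF_p^\times/C$. The action of $\sigma$ on a fiber must commute with the scalar action that the stabilizer of $L$ induces there, and that scalar action realizes all of $\FF_p^\times/C$. Combined with pair-orbit preservation, this should force $\sigma$ on the fiber over $L$ to be multiplication by some $c_L\in\FF_p^\times/C$. The key step is that $\sigma$ preserves the class of $\det(v_1,v_2,v_3)$ modulo $C$ for noncollinear triples. We must check that $\SL_3(p)$-orbits on noncollinear triples in $\Omega_C$ are classified exactly by this class. This holds because $\SL_3$ acts regularly on ordered bases of a fixed determinant. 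Preservation of the class gives $c_{L_1}c_{L_2}c_{L_3}=1$ for every noncollinear triple of lines. Replacing $L_3$ by another line $L_3'$ off $L_1L_2$ shows $c_{L_3}=c_{L_3'}$. Chaining such replacements, all $c_L$ equal a common value $c$ with $c^3=1$ in $\FF_p^\times/C$. Since $p\equiv2\pmod 3$, the order $p-1$ of $\FF_p^\times$ is prime to $3$, and so is the order of the quotient. Hence $c=1$ and $\sigma=1$.

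The main obstacle is the classification of $G$-orbits on triples of $\Omega_C$ and the claim that $\sigma$ acts by a scalar on each fiber. Collinear triples with distinct images carry finer invariants, but we need only the noncollinear ones, together with the pairwise orbits. For pairs in the same fiber, the pair orbit records the ratio $w=\lambda v$ with $\lambda\in\FF_p^\times/C$. This ratio is preserved by $\sigma$, which gives exactly the statement that $\sigma$ restricted to a fiber is multiplication by a constant.
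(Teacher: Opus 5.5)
Your proof is correct and follows the paper's route. You recover the projection to $\PP^2(\FF_p)$ from pair orbits, normalize $\tau=1$ using collinearity, the fundamental theorem of projective geometry and $\PGL_3(p)=G$, show $\sigma$ is a scalar $c_L$ on each fiber, and use the determinant class of noncollinear triples to get $c_{L_1}c_{L_2}c_{L_3}=1$, hence a common $c$ with $c^3=1$ and so $c=1$. The differences are cosmetic: faithfulness via simplicity rather than via scalars, and fiberwise scalars from the within-fiber ratio invariant rather than by varying one coordinate of a noncollinear triple. You should drop the unjustified remark that $\sigma$ must commute with the scalar action of the stabilizer, since your closing pair-ratio argument alone shows that $\sigma$ acts on each fiber as multiplication by a constant.
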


\begin{proof}
Let $g$ lie in the kernel of the action. Then $g$ fixes every point of $\PP^2(\FF_p)$, so $g$ is scalar. Since $\det(g)=1$ and $\gcd(3,p-1)=1$, it follows that $g=1$. Thus the action is faithful.

Let $\pi:\Omega_C\longrightarrow\PP^2(\FF_p)$ be the natural map, and let $\sigma\in G^{(3),\Omega_C}$. Two elements of $\Omega_C$ have the same image under $\pi$ if and only if their representatives are linearly dependent. Since $\sigma$ preserves the $G$-orbits on ordered pairs, it preserves the fibers of $\pi$ and induces a permutation of $\PP^2(\FF_p)$.

The induced permutation preserves collinearity, since three projective points are collinear if and only if their representatives are linearly dependent. By the fundamental theorem of projective geometry, it is induced by a semilinear transformation \cite[Theorem~1.9.1]{BallFiniteGeometry}. Since $\FF_p$ has no nontrivial field automorphisms and $\PGL_3(p)=\PSL_3(p)=G$, the induced permutation belongs to $G$. After composing $\sigma$ with the inverse of this element, we may assume that $\sigma$ fixes every fiber of $\pi$ setwise.

Choose a nonzero representative $v_L$ for each $L\in\PP^2(\FF_p)$, and put $A=\FF_p^\times/C$. The fiber over $L$ is identified with $A$ by $a\longmapsto [av_L]$. Thus $\sigma$ induces a bijection $f_L:A\to A$ on each fiber.

Let $L_1,L_2,L_3$ be noncollinear. The $G$-orbit of
\[([a_1v_{L_1}],[a_2v_{L_2}],[a_3v_{L_3}])\]
is determined by $a_1a_2a_3\in A$. Hence
\[f_{L_1}(a_1)f_{L_2}(a_2)f_{L_3}(a_3)=a_1a_2a_3.\]
Varying one coordinate shows that $f_L(a)=u_La$ for some $u_L\in A$. Moreover, $u_{L_1}u_{L_2}u_{L_3}=1$ whenever $L_1,L_2,L_3$ are noncollinear.

Given $L,L'\in\PP^2(\FF_p)$, choose distinct points $M,N$ on a line containing neither $L$ nor $L'$. Comparing the triples $(L,M,N)$ and $(L',M,N)$ gives $u_L=u_{L'}$. Thus all $u_L$ are equal to some $u\in A$, and $u^3=1$. Since $|A|\mid p-1$ and $3\nmid p-1$, we have $u=1$. Therefore $\sigma=1$ after composition, and the original permutation belongs to $G$.
\end{proof}

Now we treat the nonparabolic maximal subgroups.

\begin{proposition}
\label{prop:psl3-nonparabolic-host}
Let $H$ be contained in a nonparabolic maximal subgroup of $G$. Then the action of $G$ on $G/H$ has a base of size at most two.
\end{proposition}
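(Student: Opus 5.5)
It suffices to produce, for each nonparabolic maximal subgroup $M$ of $G=\SL_3(p)$ with $p\geq5$ and $p\equiv2\pmod3$, an element $g\in G$ with $M\cap M^g=1$. If $H\leq M$, the same $g$ gives $H\cap H^g=1$, so $(H,H^g)$ is a base of size two for $G/H$.

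The first step is to list the nonparabolic maximal subgroups. By Bloom's classification (equivalently, Aschbacher's theorem, or the tables of Bray--Holt--Roney-Dougal), and using $\gcd(3,p-1)=1$, which removes all the $\gcd$ factors, these are:
\begin{enumerate}
\item[(a)] the stabilizer of a triangle, $(C_{p-1})^2\rtimes S_3$;
\item[(b)] the normalizer of a Singer cycle, $C_{p^2+p+1}\rtimes C_3$;
\item[(c)] $\SO_3(p)\cong\PGL_2(p)$;
\item[(d)] when $p\equiv\pm1\pmod{10}$, a few small exceptional subgroups such as $A_6$, $\PSL_2(7)$ and $A_5$.
\end{enumerate}
The subfield case does not arise because $p$ is prime. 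For very small $p$, namely $p=5$ and $p=11$, I would instead read the list from the Atlas.

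The second step handles (a) and (b) by explicit trivial-intersection conjugates.
\begin{itemize}
\item For (a), choose a second triangle $\{x,y,z\}$ in general position with respect to the first, $\{e_1,e_2,e_3\}$, so that no point of one lies on a side of the other. An element stabilizing both triangles permutes a configuration of six points in general position. Such an element is determined by the images of four of the points, and the constraint from the other two forces it to be trivial once the second triangle avoids finitely many bad configurations. Choosing such a triangle is possible since $p\geq5$.
\item For (b), the Singer normalizer has order $3(p^2+p+1)$. Distinct Singer cycles intersect trivially, so two distinct conjugates can only meet in elements of order $3$ outside the cycle. A counting argument like the one in Lemma~\ref{lem:psl2-subgroups}'s companion, Lemma~\ref{lem:base-two-psl2}, applies here. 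The number of conjugates is $|G|/(3(p^2+p+1))=p^3(p^2-1)(p-1)/3$. Each element of order $3$ lies in only boundedly many, roughly $O(p^2)$, conjugates relative to the size of its class. So the conjugates meeting $M$ number at most about $2(p^2+p+1)\cdot O(p^2)$, which is far fewer than the total, and some conjugate meets $M$ trivially.
\end{itemize}

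The third step handles (c) and (d) by the probabilistic union bound already used in Lemma~\ref{lem:base-two-psl2}:
\[\Pr(M\cap M^g\neq1)\leq\sum_{\mathcal C}\frac{n_{\mathcal C}^2}{|\mathcal C|},\]
where $\mathcal C$ runs over the $G$-classes of subgroups of prime order. For (d), $|M|$ is bounded, while every class of prime-order subgroups has size at least of order $p^4$, coming from transvections, so the bound is below $1$ for all relevant $p$. For $M=\SO_3(p)$, the number of prime-order subgroups of $M$ is $O(p^2)$. Their $G$-classes have size of order $p^4$ for unipotent elements, and $p^6/p^2$ or larger for semisimple ones, since centralizers in $\SL_3$ have order about $p^2$. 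The sum is therefore $O(p^4/p^4)$ for the unipotent part, which is the delicate term. I expect this to be the main obstacle: the crude bound is a constant, not something small, so the constants must be computed. Unipotent elements of $\SO_3(p)$ are regular unipotent in $\SL_3(p)$, with class size about $p^6/p^2=p^4$, and there are $p+1$ unipotent subgroups in $M$. This gives a contribution of about $(p+1)^2/p^4$, which is small. Involutions contribute about $(p^2)^2/p^4$, which needs a careful count. For involutions I would switch to a direct argument: choose $g$ so that the quadratic forms $Q$ and $Q^g$ share no common orthogonal reflection structure. For example, take $Q^g$ so that the pencil $\langle Q,Q^g\rangle$ has distinct eigenvalues and no involution preserves both. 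An element of $M\cap M^g$ must preserve the pencil's four base points and its three degenerate members, which forces it to be small. The remaining small cases are then excluded by checking that a generic pencil has trivial stabilizer in $\PGL_3(p)$, valid for $p\geq5$.
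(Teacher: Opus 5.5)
Your reduction to finding $g$ with $M\cap M^g=1$, your case split, and your Singer-normalizer count essentially match the paper. However, the triangle case and the $\SO_3(p)$ case have genuine gaps.

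\emph{Triangle stabilizer.} The general-position approach is not merely unverified; it fails at $p=5$, which is inside the range of the statement. If no point of either triangle lies on a side of the other, the six points form a $6$-arc of $\operatorname{PG}(2,5)$, and by Segre's theorem this arc is a conic. Every element of $N_2\cap N_2^g$ preserves that conic, so it lies in the conic's stabilizer $\PGL_2(5)$. That group acts sharply $3$-transitively on the six conic points, so $|N_2\cap N_2^g|=120/\binom{6}{3}=6$ for every such choice of second triangle. Concretely, $\{\langle(1,2,1)\rangle,\langle(2,1,1)\rangle,\langle(3,3,1)\rangle\}$ completes the coordinate triangle to the conic $xy+yz+zx=0$. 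The coordinate $3$-cycle lies in $\SL_3(5)$ and cycles both triangles. For $p\geq 11$ you would still owe a count showing that a good triangle exists. The paper avoids genericity by choosing a deliberately special triangle. The points $e_1+e_2$ and $e_2+2e_3$ lie on sides of the coordinate triangle, while $e_1+e_2+e_3$ does not. Support patterns then force an element of the intersection to be monomial with scalar diagonal part and trivial permutation, hence trivial.

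\emph{$\SO_3(p)$.} Your replacement argument rests on a false claim. Let $A=Q'^{-1}Q$, where $Q,Q'$ are the Gram matrices. Every $x\in\SO(Q)\cap\SO(Q')$ commutes with $A$. Suppose $A$ has three distinct eigenvalues in $\FF_p$. Then its eigenvectors form a basis orthogonal for both forms, and the determinant-one matrices $\operatorname{diag}(\pm1,\pm1,\pm1)$ in that basis form a Klein four-group inside $\SO(Q)\cap\SO(Q')$. This is the $V_4$ that fixes every member of a pencil with four base points, so a generic pencil of that kind never has trivial stabilizer. If the characteristic polynomial of $A$ is linear times an irreducible quadratic, $\operatorname{diag}(1,-1,-1)$ still survives. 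What you need is for $A$ to have \emph{irreducible} characteristic polynomial. Then $x\in\FF_p[A]\cong\FF_{p^3}$ is self-adjoint, so $x^2=I$, hence $x=\pm I$ and therefore $x=I$. This is exactly the paper's choice $B_A(x,y)=B(\alpha x,y)$ on $\FF_{p^3}$ with $N_{\FF_{p^3}/\FF_p}(\alpha)=1\neq\alpha$.

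Alternatively, the union bound you abandoned does close. The involution term is exactly $p^4/\bigl(p^2(p^2+p+1)\bigr)=p^2/(p^2+p+1)$. The unipotent and odd-prime terms total $O(\log p/p^2)$, which is below the gap $(p+1)/(p^2+p+1)$ for all $p\geq5$. Your other class sizes are underestimated because $|G|\approx p^8$, not $p^6$. Finally, in (d), $A_6$ and $A_5$ are not maximal here, and $\PSL_2(7)$ occurs for $p\equiv 1,2,4\pmod 7$; a bounded-order count covers whatever actually occurs.
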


\begin{proof}
Let $H\leq M<G$, where $M$ is nonparabolic and maximal. It is enough to find $g\in G$ such that $M\cap M^g=1$. Indeed, this then gives $H\cap H^g=1$.

By the subgroup classification for $\PSL_3(p)$ \cite[Theorem~2.4]{King2005}, the subgroup $M$ is one of the following:
\begin{enumerate}
\item[\textup{(i)}] the stabilizer $N_2$ of a projective triangle;
\item[\textup{(ii)}] the normalizer $N_3$ of a Singer cycle;
\item[\textup{(iii)}] a subgroup isomorphic to $\SO_3(p)$;
\item[\textup{(iv)}] a subgroup isomorphic to $\PSL_2(7)$.
\end{enumerate}
The other cases in the classification do not occur because $p$ is prime and $3\nmid p-1$.

Suppose first that $M=N_2$. Let $\Delta=\{\langle e_1\rangle,\langle e_2\rangle,\langle e_3\rangle\}$, and put
\[v_1=e_1+e_2,\qquad v_2=e_2+2e_3,\qquad v_3=e_1+e_2+e_3.\]
The matrix
\[
g_2=
\begin{pmatrix}
1&0&1\\
1&1&1\\
0&2&1
\end{pmatrix}
\]
has determinant one, and $N_2^{g_2}$ stabilizes $\Delta'=\{\langle v_1\rangle,\langle v_2\rangle,\langle v_3\rangle\}$. Let $x\in N_2\cap N_2^{g_2}$. Write $x=DP_\tau$, where $D$ is diagonal and $\tau\in S_3$. Since $\langle v_3\rangle$ is the unique point of $\Delta'$ whose coordinates are all nonzero, $x$ fixes $\langle v_3\rangle$. Hence $D$ is scalar. The only coordinate permutations preserving the supports of $\langle v_1\rangle$ and $\langle v_2\rangle$ are $1$ and $(1\,3)$, while
\[(1\,3)\langle v_1\rangle=\langle e_2+e_3\rangle\neq \langle e_2+2e_3\rangle.\]
Thus $\tau=1$, so $x=dI$. Since $d^3=1$ and $\gcd(3,p-1)=1$, we have $x=1$. Therefore $N_2\cap N_2^{g_2}=1$.

Now suppose that $M=N_3$. Identify $\FF_p^3$ with $K=\FF_{p^3}$ and put
\[T=\{x\mapsto ax:N_{K/\FF_p}(a)=1\}.\]
If $\sigma:x\mapsto x^p$, then $N_3=T\rtimes\langle\sigma\rangle$. Set $n=p^2+p+1$, so $|T|=n$ and $|N_3|=3n$. Since $3\nmid n$, every element of $N_3\setminus T$ has order three.

For $1\neq x\in T$, one has
\[|C_G(x)|=n\qquad\text{and}\qquad |x^G\cap N_3|=3.\]
Thus
\[
\bigl|\{g\in G:x\in N_3^g\}\bigr|=|x^G\cap N_3|\,|C_G(x)|=3n.
\]
For $y\in N_3\setminus T$,
\[|C_G(y)|=p^2-1 \qquad\text{and}\qquad |y^G\cap N_3|=2n,\]
so that
\[
\bigl|\{g\in G:y\in N_3^g\}\bigr|=2n(p^2-1).
\]
Summing over the $n-1$ nontrivial elements of $T$ and the $2n$ elements of $N_3\setminus T$ gives
\[ \bigl|\{g\in G:N_3\cap N_3^g\neq1\}\bigr| \leq 3n(n-1)+4n^2(p^2-1). \]
Since $|G|=p^3(p-1)n(p^2-1)$ and
\[\frac{|G|-3n(n-1)-4n^2(p^2-1)}{n}=p^6-p^5-5p^4-3p^3-3p^2+p+4>0\]
for $p\geq5$, there exists $g_3\in G$ such that $N_3\cap N_3^{g_3}=1$.

Suppose next that $M=\SO(V,B)$ for a nondegenerate symmetric bilinear form $B$. Identify $V$ with $K=\FF_{p^3}$. After an isometry, we may write $B(x,y)=\operatorname{Tr}_{K/\FF_p}(cxy)$ for some $c\in K^\times$. Choose $1\neq\alpha\in K^\times$ with $N_{K/\FF_p}(\alpha)=1$, and let $A$ be multiplication by $\alpha$. Then $A$ is self-adjoint with respect to $B$ and has irreducible minimal polynomial of degree three.

Define $B_A(x,y)=B(Ax,y)$. The forms $B$ and $B_A$ have the same discriminant, so there exists $g\in G$ such that $\SO(B_A)=\SO(B)^g$. If $x\in\SO(B)\cap\SO(B_A)$, then $xA=Ax$. Hence $x$ is multiplication by some $\beta\in K^\times$. Preservation of $B$ gives $\beta^2=1$, so $x=\pm I$. Since $x\in G$ and $\det(-I)=-1$, we obtain $x=I$. Thus $\SO(B)\cap\SO(B)^g=1$.

Finally, suppose that $M\cong\PSL_2(7)$. This case can occur only for $p\geq11$, so $p\nmid |M|$ and every nonidentity element of $M$ is semisimple. The numbers of elements of orders $2,3,4$, and $7$ in $M$
are
\[21,\qquad56,\qquad42,\qquad48.\]
Every noncentral semisimple element of $G$ has centralizer of order at most $p(p-1)^2(p+1)$. Therefore
\[\begin{aligned}
\bigl|\{g\in G:M\cap M^g\neq1\}\bigr|&\leq (21^2+56^2+42^2+48^2)p(p-1)^2(p+1)\\
&=7645p(p-1)^2(p+1).
\end{aligned}\]
Since $7645<p^2(p^2+p+1)$ for $p\geq11$, this number is smaller than $|G|=p^3(p^3-1)(p^2-1)$. Hence $M\cap M^g=1$ for some $g\in G$.

Thus in every case there exists $g\in G$ such that $M\cap M^g=1$.
It follows that $H\cap H^g=1$, so the action on $G/H$ has a base of size at most two.
\end{proof}

We next prove a lemma used in the treatment of the parabolic subgroups.

\begin{lemma}\label{lem:levi-separator}
Let $p\geq5$, let $L=\GL_2(p)$, and let $M\leq L$ with $\SL_2(p)\nleq M$. If $D$ is a split maximal torus of $L$, then there exists $g\in L$ such that
\[D\cap M^g=D\cap\operatorname{core}_L(M).\]
\end{lemma}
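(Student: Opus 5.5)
The plan is to pass to $\bar L=\PGL_2(p)$ acting on $\PP^1(\FF_p)$, where $D$ maps to the stabilizer $\bar D$ of the two points $0,\infty$. The central part of the statement is automatic. Let $Z=Z(L)$ be the scalars. Then $Z\cap M$ is normal in $L$, so it lies in $\operatorname{core}_L(M)$, and $Z\cap M^g=Z\cap M$ for every $g$. Since $\operatorname{core}_L(M)\le M^g$ for every $g$, the inclusion $D\cap\operatorname{core}_L(M)\subseteq D\cap M^g$ always holds. It therefore suffices to find $g$ with $D\cap M^g\le Z$, that is, with $\bar D\cap\bar M^{\,g}=1$ in $\bar L$.

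First I will check that $\bar M$ does not contain $\PSL_2(p)$. If it did, then $MZ\ge\SL_2(p)$, and taking commutators gives $M\ge[\SL_2(p),\SL_2(p)]=\SL_2(p)$ since $p\ge5$, which contradicts the hypothesis. By Dickson's classification (as in Lemma~\ref{lem:psl2-subgroups}, applied to $\PGL_2(p)$), $\bar M$ is therefore contained in one of the following: a Borel subgroup, a split torus normalizer, a nonsplit torus normalizer, or a subgroup isomorphic to $S_4$ or $A_5$.

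The two geometric cases are handled by choosing the conjugate directly.
\begin{enumerate}
\item[\textup{(i)}] If $\bar M$ fixes a point $x$, choose $g$ with $x^g\notin\{0,\infty\}$. Then every element of $\bar D\cap\bar M^{\,g}$ fixes the three points $0,\infty,x^g$, so it is trivial.
\item[\textup{(ii)}] If $\bar M$ stabilizes a pair $\{x,y\}$, conjugate so that the pair becomes $\{a,b\}\subseteq\FF_p^\times$ with $b\neq\pm a$; this is possible because $p\ge5$. A nontrivial element $t\mapsto ct$ of $\bar D$ fixes no point of $\FF_p^\times$. So if it preserves $\{a,b\}$, it must swap them, which forces $c^2=1$, hence $c=-1$ and $b=-a$, a contradiction.
\end{enumerate}

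The remaining cases use a counting argument. A nontrivial element $\bar x\in\bar M$ can lie in $\bar D^{\,h}$ only if it fixes two points of $\PP^1(\FF_p)$. For such $\bar x$, the set of $h$ with $\bar x^{h}\in\bar D$ has size $2|C_{\bar L}(\bar x)|=2(p-1)$, since $h$ must send the ordered pair of fixed points of $\bar x$ onto $\{0,\infty\}$ in one of two orders. Therefore a suitable $g$ exists as soon as the number of nontrivial split semisimple elements of $\bar M$ is less than $|\bar L|/(2(p-1))=p(p+1)/2$.
\begin{enumerate}
\item[\textup{(iii)}] For a nonsplit torus normalizer, only elements outside the torus can be split, and there are at most $p+1<p(p+1)/2$ of them.
\item[\textup{(iv)}] For $A_5$, which occurs only when $p\ge11$, there are at most $59<66$ nontrivial elements.
\item[\textup{(v)}] For $S_4$, there are $23$ nontrivial elements, and $23<p(p+1)/2$ once $p\ge7$.
\end{enumerate}

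The main obstacle I expect is the case $p=5$ with $\bar M\cong S_4$. There the counting bound fails, since $23>15$. I would treat it directly. In $\PGL_2(5)\cong S_5$, the group $\bar D\cong C_4$ is generated by a $4$-cycle, and $\bar M$ is a point stabilizer $S_4$. Choose the conjugate of $\bar M$ to be the stabilizer of the point fixed by $\bar D$, under the identification of $\bar L$ with $S_5$; the element of $\bar D$ of order $2$ and the generators then need to be checked by hand against this choice. If that hand check fails, I would instead reexamine whether this configuration actually arises: perhaps it needs to be excluded, or perhaps one must accept a nontrivial intersection lying in the core. The case $p=5$ is the one I would verify most carefully.
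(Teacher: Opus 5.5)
Your reduction is correct and follows the paper's route. You show $\bar M\not\geq\PSL_2(p)$; your commutator argument is cleaner than the paper's. You note that $D\cap M^g\leq Z$ already forces $D\cap M^g=D\cap\operatorname{core}_L(M)$. You then look for a conjugate of $\bar M$ meeting $\bar D$ trivially, which is the paper's ``base of size two sent to the eigenlines of $D$'' argument. Cases (i)--(v) are fine, apart from a harmless slip. The number of $h$ with $\bar x^h\in\bar D$ is $2(p-1)$ by sharp $3$-transitivity. But this is not $2|C_{\bar L}(\bar x)|$ when $\bar x$ is an involution, since its centralizer has order $2(p-1)$.

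The genuine gap is $p=5$ with $\bar M$ inside an $S_4$, and this case cannot be avoided. Since $5\equiv 2\pmod 3$, Lemma~\ref{lem:psl3-parabolic-descendants} uses the statement at $p=5$, and $M=\pi^{-1}(S_4)$ does not contain $\SL_2(5)$. Your proposed conjugate is exactly the wrong one. If $\bar D=\langle c\rangle$ with $c$ a $4$-cycle in $S_5$, the stabilizer of the point fixed by $c$ contains $\bar D$, so $\bar D\cap\bar M^g=\bar D\cong C_4$. Your fallbacks are also unavailable. Because $\operatorname{core}_{\PGL_2(5)}(S_4)=1$, we get $\operatorname{core}_L(M)\leq Z$, so the statement demands $\bar D\cap\bar M^g=1$ exactly.

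The fix is to take the stabilizer of a point moved by $c$. No nontrivial power of a $4$-cycle fixes a point of its support, so the intersection is trivial. Alternatively, your own counting criterion succeeds once you count only split elements:
\begin{enumerate}
\item[\textup{(a)}] The $8$ elements of order $3$ fix no point, since $3\nmid p-1$.
\item[\textup{(b)}] An involution $[A]\in\PGL_2(5)$ has eigenvalues $\pm\sqrt{-\det A}$. Since $-1$ is a square modulo $5$, it is split only if it lies in $\PSL_2(5)$.
\end{enumerate}
Thus at most the $6$ elements of order $4$ and the $3$ involutions of $S_4\cap\PSL_2(5)\cong A_4$ are split, and $9<15$. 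The paper handles this case with the explicit base $(\infty,1)$ for $\langle z\mapsto 2z,\ z\mapsto(z+2)/(z+3)\rangle\cong S_4$.
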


\begin{proof}
Let $Z=Z(L)$, let $\pi:L\longrightarrow\PGL_2(p)$, and put $A=\pi(M)$. We first claim that $A$ does not contain $\PSL_2(p)$. Otherwise,
\[\pi([M,M])=[A,A]\geq\PSL_2(p).\]
Since $[M,M]\leq\SL_2(p)$, the subgroup $M\cap\SL_2(p)$ maps onto $\PSL_2(p)$. It is therefore either $\SL_2(p)$ or a subgroup of index two. The latter is impossible because $\SL_2(p)$ is perfect, while the former contradicts the hypothesis.

By the subgroup classification of $\PGL_2(p)$ \cite[Theorem~D]{Faber}, the group $A$ is cyclic, dihedral, elementary abelian of order four, isomorphic to $A_4$, $S_4$, or $A_5$, or contained in an affine subgroup. We show that $A$ has a base of size at most two on $\PP^1(\FF_p)$.

An affine subgroup has the base $(0,1)$. A subgroup of a split torus normalizer has the base $(1,c)$ for any $c\neq0,\pm1$. For a nonsplit torus normalizer, every nonidentity torus element fixes no point of $\PP^1(\FF_p)$, while each element outside the torus fixes at most two points. Since there are $p+1$ such elements and $2(p+1)<p(p+1)$, some ordered pair of distinct points has trivial stabilizer. A subgroup of order four is contained in the centralizer of one of its involutions, which is a torus normalizer.

Finally, let $E\cong A_4,S_4$, or $A_5$. Every nonidentity element of $\PGL_2(p)$ fixes at most two points, and hence at most two ordered pairs of distinct points. Thus $E$ has a base of size two whenever $2(|E|-1)<p(p+1)$. This holds for $A_4$ when $p\geq5$, for $S_4$ when $p\geq7$, and for $A_5$ when $p\geq11$. When $p=5$, the case $A_5=\PSL_2(5)$ is excluded. For $S_4\leq\PGL_2(5)$, the transformations
\[a(z)=2z,\qquad b(z)=\frac{z+2}{z+3}\]
generate a subgroup isomorphic to $S_4$. Its stabilizer of $\infty$ is $\langle a\rangle$, and no nonidentity element of $\langle a\rangle$ fixes $1$. Hence $(\infty,1)$ is a base. Since $\PGL_2(5)$ has a unique conjugacy class of $S_4$ subgroups \cite[Theorem~D\textup{(7)}]{Faber}, the claim follows.

We now determine the core of $M$. The group $\operatorname{core}_{\PGL_2(p)}(A)$ is trivial. Indeed, its intersection with the simple normal subgroup $\PSL_2(p)$ is trivial, and it therefore centralizes $\PSL_2(p)$. Since $C_{\PGL_2(p)}(\PSL_2(p))=1$, the claim follows. Hence $\operatorname{core}_L(M)=M\cap Z$.

Let $(x,y)$ be a base for $A$ on $\PP^1(\FF_p)$. Choose $g\in L$ such that $(x,y)^g$ is the pair of eigenlines of $D$. Then $\pi(D)\cap\pi(M^g)=1$, and therefore $D\cap M^g\leq Z$. Since $M\cap Z$ is central and $Z\leq D$, we have $M\cap Z\leq D\cap M^g$. Consequently,
\[D\cap M^g=M\cap Z = \operatorname{core}_L(M)=D \cap \operatorname{core}_L(M).\]
\end{proof}

Now we are ready to prove the parabolic cases.

\begin{lemma}\label{lem:psl3-parabolic-descendants}
Let $P$ be a point or line stabilizer of $G$, and let $H\leq P$. Then the action of $G$ on $G/H$ is $3$-closed.
\end{lemma}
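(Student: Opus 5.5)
By the duality automorphism $g\mapsto (g^{-1})^{\mathsf T}$, it suffices to treat a point stabilizer $P$. Fix $P=G_{\langle e_1\rangle}$ and write $P=Q\rtimes L$, where $Q\cong\FF_p^2$ is the unipotent radical and $L\cong\GL_2(p)$ is the Levi complement acting on $\FF_p^2$ through a twist by the determinant. I would split according to how $H$ sits relative to the projection $\overline H=HQ/Q\leq L$ and relative to $Q$.

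\emph{Case 1: $H\cap Q=1$ or $H$ is ``small''.} Here the aim is a base of size two, so that Lemma~\ref{lem:base-size} applies. If $\SL_2(p)\nleq \overline H$, Lemma~\ref{lem:levi-separator} gives $g\in L$ with $D\cap \overline H^{g}$ central, where $D$ is a split torus. I would then choose a second point stabilizer $P^{w}$, with $w$ a Weyl element, so that $P\cap P^{w}$ is the stabilizer of two points. This is a Borel-type group $Q'\rtimes D$. The plan is to show that $H\cap H^{gw}$ lies in $D\cap\overline H^{g}\cdot(\text{unipotent part})$, and then to kill the unipotent part using the freedom to conjugate by elements of $Q$. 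This is done by choosing the base point off the finitely many lines fixed by $H\cap Q$, and requires a small count for $p\geq5$. The remaining central part of $L$ corresponds to scalars on $\langle e_1\rangle$. If it survives, $H$ contains a subgroup of the diagonal $\{\mathrm{diag}(a^{-2},a,a)\}$, and I push it into Case 2 instead.

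\emph{Case 2: $H\geq Q\rtimes\SL_2(p)$, or, more generally, $H$ contains $Q$ and enough of $L$ that $G/H$ fibres over points.} If $H\geq Q\rtimes\SL_2(p)$, then $H$ is the stabilizer of a vector class $[v]\in\Omega_C$ for $C=\{a:\mathrm{diag}(a,\ast)\in H\}$. Indeed, $Q\rtimes \SL_2(p)$ is the stabilizer of $e_1$ in $\SL_3$, and the quotient $P/(Q\rtimes\SL_2(p))\cong\FF_p^\times$ acts by scaling $e_1$. So $G/H\cong\Omega_C$, and Proposition~\ref{prop:psl3-scalar-fiber} finishes this subcase. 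For the intermediate case, where $H\geq Q$ but $\overline H\not\geq \SL_2(p)$, the fibres of $G/H\to\PP^2(\FF_p)$ are $P/H\cong L/\overline H$. I would follow the pattern of Proposition~\ref{prop:psl3-scalar-fiber}. The pair orbits detect ``same projective point'', so a $3$-closure element $\sigma$ induces a collineation, hence an element of $G$, and I normalize it to be trivial on $\PP^2$. Then I would use mixed triples $(x,y,z)$ with $x,y$ in one fibre and $z$ in another. The stabilizer $G_{x,y}$ of two points of the fibre over $\ell$ contains $Q$, which acts transitively on the points off any line through $\ell$. This forces $\sigma$ on the fibre over $\ell$ to commute with the $L$-action by Lemma~\ref{lem:levi-separator}-type separation. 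Comparing with fibres over other points, via triples of noncollinear points as in Proposition~\ref{prop:psl3-scalar-fiber}, then shows that $\sigma$ is induced by a single element of the torus normalizing $\overline H$, and hence by an element of $G$.

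\emph{Case 3: $Q\nleq H$ but $H\cap Q\neq1$.} Then $H\cap Q$ is a line $R\leq Q$ normalized by $\overline H$. Thus $\overline H$ lies in a Borel subgroup of $L$, and $H$ lies in the full Borel subgroup $B$ of $G$, which is the stabilizer of a flag. Now $G/H$ maps onto both the point set and the line set, and I would redo Case 2 with the flag variety in place of $\PP^2$, using the incidence relation as in Lemma~\ref{lem:psl33-sync}.

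The main obstacle I expect is the uniform treatment of Case 2 and Case 3 fibres for arbitrary $\overline H$. There, one must show that fibrewise bijections preserving all mixed triple orbits come from a single group element. I would try first to reduce to the case $\overline H\leq D$ (a torus) by Lemma~\ref{lem:levi-separator}, where the fibres become scalar-type cosets and the multiplicative argument of Proposition~\ref{prop:psl3-scalar-fiber} (using $3\nmid p-1$) applies almost verbatim.
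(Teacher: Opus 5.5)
There is a genuine gap in Case~1. You aim for a base of size two whenever $H\cap Q=1$, but you only treat $\SL_2(p)\nleq\overline H$, and the remaining subcase cannot be done this way. Take $H=L\cong\GL_2(p)$, the stabilizer of the antiflag $(\langle e_1\rangle,\langle e_2,e_3\rangle)$. Every $L\cap L^g$ is the stabilizer of two antiflags $(x,\ell)$ and $(x',\ell')$. It contains every central collineation whose centre is a point of $\ell\cap\ell'$ and whose axis is a line through $x$ and $x'$. Nontrivial such elations or homologies always exist in $G=\PGL_3(p)$. So $G/L$ has base size at least three, and Lemma~\ref{lem:base-size} is unavailable. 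Since $L\cap Q=1$, neither Case~2 nor Case~3 covers it. Your fallback ``push it into Case~2'' has the same defect: a subgroup with $H\cap Q=1$ whose image meets $Z(L)$ nontrivially does not contain $Q$.

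Beyond this, the fibre-synchronization step you flag as the main obstacle is essentially the whole content of the lemma, and your suggested route does not work. Lemma~\ref{lem:levi-separator} only gives a conjugate of $\overline H$ meeting a split torus in its core; it does not move $\overline H$ into a torus. Also, once $\sigma$ acts trivially on $\PP^2(\FF_p)$, the goal must be $\sigma=1$, since $Z(G)=1$.

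The paper uses no base-size argument for parabolic descendants. It first passes to $G/J$ with $J=HP^0$ and $P^0=Q\rtimes\SL_2(p)$. This is the scalar-fibre action $\Omega_C$, so the normalized element $k$ of the $3$-closure satisfies $\alpha^k\in\alpha^{P_x^0}$ for every $\alpha$ over $x$. It then splits by the kernel $N_x=\operatorname{core}_{P_x}(H_x)$ of $P_x$ on the fibre, not by $H\cap Q$. Either $P_x^0\le N_x$, which gives $k=1$ at once, or $N_x$ is small and $N_x\cap N_y=1$ for $x\neq y$.

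In the small case, for each ordered projective basis $F=(x,y,z)$, it chooses $\eta_x$ over $x$ whose stabilizer in the diagonal torus $D_F$ is exactly $D_F\cap N_x$. This uses Lemma~\ref{lem:levi-separator} when $\SL_2(p)\nleq\overline H$. When $\SL_2(p)\le\overline H$, which includes $H=L$, it uses the regular action of $D_F$ on vectors of $Q$ with nonzero coordinates, valid because $\gcd(3,p-1)=1$. It matches $k$ on $(\eta_x,\eta_y,\eta_z)$ by some $t_F\in D_F$. Since $G_{\eta_y,\eta_z}$ and the corresponding two-point stabilizer in $G^{(3),G/H}$ have the same orbits \cite[Theorem~2.3(ii)]{OBrienPonomarenkoVasilievVdovin}, $kt_F^{-1}$ fixes the three fibres pointwise. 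Connectivity of the graph of ordered bases then makes $t_F$ independent of $F$. Your proposal lacks a three-point argument of this kind.
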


\begin{proof}
By contragredient duality, it is enough to consider a point stabilizer. Put
\[\Omega=G/H, \qquad \widehat G=G^{(3),\Omega}, \qquad P^0=Q\rtimes\SL_2(p), \qquad Q\cong\FF_p^2, \]
and set $J=HP^0$. Since $P^0\trianglelefteq P$, the subgroup $J$ is well defined.

The partition of $\Omega$ into the $J$-orbits is determined by a
$G$-invariant relation on ordered pairs. Hence $\widehat G$ preserves
this partition and induces a permutation group on $G/J$.
Moreover, this induced group is contained in the $3$-closure of the
action of $G$ on $G/J$.

Let
\[C=J/P^0\leq P/P^0\cong\FF_p^\times.\]
The action on $G/J$ is isomorphic to the action on $(\FF_p^3\setminus\{0\})/C$. It is therefore $3$-closed by Proposition~\ref{prop:psl3-scalar-fiber}. Thus, for $c\in\widehat G$, we may compose $c$ with an element of $G$ and obtain $k\in\widehat G$ fixing every $J$-orbit setwise.

For each projective point $x$, let $X_x$ be the corresponding $J$-orbit in $\Omega$, let $P_x$ be its stabilizer in $G$, and let $P_x^0$ be the corresponding conjugate of $P^0$. Then $\alpha^k\in\alpha^{P_x^0}$ for $\alpha\in X_x$. Choose $H_x\leq P_x$ corresponding to the action of $P_x$ on $X_x$, and put $N_x=\operatorname{core}_{P_x}(H_x)$.

Write $P_x=Q_x\rtimes L_x$ where $L_x\cong\GL_2(p)$. The action of $L_x$ on $Q_x$, written as row vectors, is $\rho(A)u=(\det A)^{-1}uA^{-1}$. The subgroup $N_x\cap Q_x$ is $L_x$-invariant, so it is either $1$ or $Q_x$.

If $N_x\cap Q_x=1$, then
\[[N_x,Q_x]\leq N_x\cap Q_x=1.\]
Thus $N_x\leq C_{P_x}(Q_x)=Q_x$, and hence $N_x=1$.

Suppose that $Q_x\leq N_x$. Then $N_x/Q_x$ is normal in $L_x$. Such a normal subgroup either contains $\SL_2(p)$ or is scalar. Consequently, either $P_x^0\leq N_x$, or $N_x=1$, or $N_x=Q_x\rtimes Z_x$ for some scalar subgroup $Z_x\leq L_x$. We say that $N_x$ is \emph{small} in the latter two cases.

If $N_x$ and $N_y$ are small and $x\neq y$, then $N_x\cap N_y=1$. Indeed, taking $x=\langle e_1\rangle$ and $y=\langle e_2\rangle$, an element of the intersection has both forms
\[
\begin{pmatrix}
c^{-2}&u&v\\
0&c&0\\
0&0&c
\end{pmatrix},
\qquad
\begin{pmatrix}
d&0&0\\
u'&d^{-2}&v'\\
0&0&d
\end{pmatrix}.
\]
Equality gives $c=d=c^{-2}$ and forces the off-diagonal entries to vanish. Thus $c^3=1$, so $c=1$. Since the pairs $(P_x,H_x)$ are $G$-conjugate, the same alternative holds for every $x$.

Suppose first that $N_x$ is small for every $x$. Let $F=(x,y,z)$ be an ordered projective basis, and put
\[D_F=P_x\cap P_y\cap P_z, \qquad K_x=D_F\cap N_x.\]
We claim that there exists $\eta_x\in X_x$ such that $(D_F)_{\eta_x}=K_x$.

Identify $P_x=Q\rtimes L$, let $M$ be the image of $H_x$ in $L$, and put $W=H_x\cap Q$. If $\SL_2(p)\leq M$, then the smallness of $N_x$ gives $W=0$. Since $-I$ acts as $-1$ on $Q$, every complement to $Q$ in $Q\rtimes\SL_2(p)$ is $Q$-conjugate to the standard complement. We may therefore assume that $H_x\leq L$.

The two characters through which $D_F$ acts on $Q$ have exponent matrix of determinant $3$. Since $\gcd(3,p-1)=1$, the resulting map $D_F\longrightarrow(\FF_p^\times)^2$ is bijective. A vector whose two coordinates are nonzero therefore gives a point $\eta_x$ with trivial $D_F$-stabilizer.

Now suppose that $\SL_2(p)\nleq M$. By Lemma~\ref{lem:levi-separator}, after conjugating in $L$ we may assume that
\[D_F\cap M=D_F\cap\operatorname{core}_L(M)=:R.\]
If $W=Q$, the point corresponding to $H_x$ has stabilizer $D_F\cap H_x=R=K_x$. If $W<Q$, then $N_x=1$ and $R$ is scalar. For
$1\neq d=cI\in R$, the map $\rho(d)-1=(c^{-3}-1)I$ is invertible. The condition that $d$ belong to a $Q$-conjugate of $H_x$ excludes at most one coset of $W$ in $Q$. There are at most $|R|-1\leq p-2$ such excluded cosets, whereas $|Q/W|\geq p$. A conjugate outside their union gives the required point $\eta_x$.

Choose such points $\eta_x,\eta_y,\eta_z$ for the ordered basis $F$. Since $k$ preserves their $G$-orbit and fixes their three projective points, there exists $t_F\in D_F$ such that
\[(\eta_x,\eta_y,\eta_z)^k=(\eta_x,\eta_y,\eta_z)^{t_F}.\]
Set $h_F=kt_F^{-1}$. Then $h_F$ fixes the three chosen points.

Since $G$ and $\widehat G$ have the same orbits on $\Omega^3$, repeated application of \cite[Theorem~2.3(ii)]{OBrienPonomarenkoVasilievVdovin} shows that $G_{\eta_y,\eta_z}$ and $\widehat G_{\eta_y,\eta_z}$ have the same orbits on $\Omega$. Let $u\in X_x$. There exists $a\in G_{\eta_y,\eta_z}$ such that $u^{h_F}=u^a$. Both points lie in $X_x$, so $a$ fixes $x$. Since $a$ also fixes $\eta_y$ and $\eta_z$, it fixes $y$ and $z$. Hence
\[a\in D_F\cap G_{\eta_y}\cap G_{\eta_z}= K_y\cap K_z \leq N_y\cap N_z = 1.\]
Thus $h_F$ fixes $X_x$ pointwise. By symmetry, it fixes $X_y$ and $X_z$ pointwise. Therefore $k$ and $t_F$ induce the same permutation on these three sets.

If two ordered projective bases $F$ and $F'$ agree in two entries, then $t_Ft_{F'}^{-1}$ fixes the corresponding two sets pointwise. Hence $t_Ft_{F'}^{-1}\in N_x\cap N_y=1$. The graph of ordered projective bases, in which two bases are adjacent when they agree in two entries, is connected. Thus all $t_F$ are equal to one element $t\in G$.

Every projective point occurs in an ordered projective basis, and $t$ fixes every point in each such basis. Hence $t$ fixes every point of $\PP^2(\FF_p)$. It is therefore scalar, and so $t=1$ because $Z(G)=1$. The sets $X_x$ cover $\Omega$, so $k=1$.

Finally, suppose that $P_x^0\leq N_x$ for every $x$. Since $P_x^0$ acts trivially on $X_x$, the relation $\alpha^k\in\alpha^{P_x^0}$ implies $\alpha^k=\alpha$ for every $\alpha\in X_x$. Again the sets $X_x$ cover $\Omega$, so $k=1$.

Thus every element of $\widehat G$ belongs to $G$, and the action on $G/H$ is $3$-closed.
\end{proof}

We now combine the preceding results. 

\begin{lemma}\label{lem:psl3-all-constituents}
For every proper subgroup $H<G$, the action of $G$ on $G/H$ is $3$-closed. If $H$ is contained in a nonparabolic maximal subgroup, then this action has a base of size at most two.
\end{lemma}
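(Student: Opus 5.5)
The lemma assembles results already proved in this subsection, so I would argue by a case split on a maximal subgroup containing $H$. Throughout, $p\geq5$ is prime with $p\equiv2\pmod3$ and $G=\PSL_3(p)=\SL_3(p)$.

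Let $H<G$ be proper, and choose a maximal subgroup $M$ of $G$ with $H\leq M$. By the subgroup classification for $\PSL_3(p)$ \cite[Theorem~2.4]{King2005}, as already used in Proposition~\ref{prop:psl3-nonparabolic-host}, $M$ is either parabolic or nonparabolic. If it is parabolic, it is a point stabilizer or a line stabilizer in $\operatorname{PG}(2,p)$. If it is nonparabolic, it is one of the types (i)--(iv) listed there, and no other types occur because $p$ is prime and $3\nmid p-1$.

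\emph{Nonparabolic case.} Suppose $M$ is nonparabolic. Proposition~\ref{prop:psl3-nonparabolic-host} gives some $g\in G$ with $H\cap H^g=1$. Hence the action on $G/H$ has a base of size at most two. This action is faithful, because $G$ is simple and $H$ is proper. Lemma~\ref{lem:base-size} then shows that it is $3$-closed. This case proves both assertions of the lemma at once.

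\emph{Parabolic case.} Suppose $M$ is a point or line stabilizer. Then $H\leq M$, and Lemma~\ref{lem:psl3-parabolic-descendants} applies directly: the action on $G/H$ is $3$-closed. The second assertion of the lemma makes no claim about such $H$, so nothing further is needed here. One might worry about a subgroup contained both in a parabolic and in a nonparabolic maximal subgroup. This causes no conflict, since the two cases give consistent conclusions and the base-size statement depends only on the existence of a nonparabolic host.

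\emph{Main point to check.} There is no real obstacle. The only step to verify is that every maximal subgroup of $\SL_3(p)$ for these $p$ is either parabolic or one of the four nonparabolic types handled in Proposition~\ref{prop:psl3-nonparabolic-host}, so that the case split is exhaustive. This is exactly what that proposition's appeal to \cite[Theorem~2.4]{King2005} supplies. Subfield groups do not arise because $p$ is prime, and the $3^2{:}Q_8$-type and $\mathrm{PSL}_2(7)\times 3$-type possibilities require $3\mid p-1$.
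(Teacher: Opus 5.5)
Your proposal is correct and follows the paper's proof: choose a maximal subgroup containing $H$, apply Lemma~\ref{lem:psl3-parabolic-descendants} in the parabolic case, and apply Proposition~\ref{prop:psl3-nonparabolic-host} together with Lemma~\ref{lem:base-size} in the nonparabolic case. Your remark that the base-size assertion needs only the existence of some nonparabolic maximal subgroup containing $H$, even if the chosen one is parabolic, makes explicit a point the paper's proof leaves implicit.
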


\begin{proof}
Choose a maximal subgroup $M$ of $G$ containing $H$. If $M$ is parabolic, then the action on $G/H$ is $3$-closed by
Lemma~\ref{lem:psl3-parabolic-descendants}.

If $M$ is nonparabolic, then the action on $G/H$ has a base of size at most two by Proposition~\ref{prop:psl3-nonparabolic-host}. It is therefore $3$-closed by Lemma~\ref{lem:base-size}.
\end{proof}

\begin{lemma}\label{lem:psl3-parabolic-sync}
Let $H\leq P_X$ and $K\leq P_Y$, where $P_X$ and $P_Y$ are point or line parabolic subgroups of $G$. Then the diagonal action of $G$ on $G/H\sqcup G/K$ is $3$-closed.
\end{lemma}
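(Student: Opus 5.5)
Our plan is to imitate Lemma~\ref{lem:psl33-sync}, which handles the analogous situation for $\PSL_3(3)$. Put $\Omega_1=G/H$ and $\Omega_2=G/K$, and let $\sigma\in G^{(3),\Omega_1\sqcup\Omega_2}$.

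By \cite[Lemmas~2.1 and~2.2]{FreedmanGiudiciPraeger2024}, $\sigma$ preserves both constituents, and its restrictions lie in the respective $3$-closures. Lemma~\ref{lem:psl3-all-constituents} (via Lemma~\ref{lem:psl3-parabolic-descendants}) says that both constituent actions are $3$-closed. So after composing with an element of $G$, we may assume that $\sigma$ fixes $\Omega_1$ pointwise and acts on $\Omega_2$ as some $t\in G$. It remains to show $t=1$.

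Since $H\leq P_X$ and $K\leq P_Y$, there are surjective $G$-maps $\rho_1:\Omega_1\to Y_1$ and $\rho_2:\Omega_2\to Y_2$, namely $gH\mapsto gP_X$ and $gK\mapsto gP_Y$. Here each $Y_i$ is the point set or the line set of $\operatorname{PG}(2,p)$. By \cite[Lemma~2.1]{FreedmanGiudiciPraeger2024}, $\sigma$ preserves every $G$-orbit on $\Omega_1\times\Omega_2$, so it preserves every $G$-invariant relation between the two constituents. We split into two cases according to the types of $Y_1$ and $Y_2$.

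\textbf{Same type.} Use the relation $\rho_1(x)=\rho_2(y)$. Since $\sigma$ fixes $x$, we get $\rho_2(y^t)=\rho_1(x)=\rho_2(y)$ whenever $\rho_2(y)=\rho_1(x)$. Surjectivity of $\rho_1$ then shows that $t$ fixes every element of $Y_2$.

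\textbf{Opposite types.} Use the incidence relation between $\rho_1(x)$ and $\rho_2(y)$. Take $Y_2$ to be points, say; the other case is dual. Then for every line $\ell=\rho_1(x)$ and every point $y$ on $\ell$, the point $\rho_2(y)^t$ again lies on $\ell$. Hence $t$ maps each line to itself, because its point set is preserved. A point is the intersection of two lines, so $t$ fixes all points.

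In either case $t$ acts trivially on the point set or on the line set of $\operatorname{PG}(2,p)$, so it is scalar. Since $Z(G)=1$, this gives $t=1$, and the diagonal action is $3$-closed.

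The only step that needs care is the opposite-type incidence argument, where we must check that the preserved relation really pins down every element of $Y_2$. This uses surjectivity of $\rho_1$, so that every line (respectively point) arises from some $x\in\Omega_1$.
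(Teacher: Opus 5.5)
Your proposal is correct and takes essentially the same route as the paper. You normalize so that $\sigma$ is trivial on $G/H$ and acts on $G/K$ as some $t\in G$. You then use that $\sigma$ preserves the $G$-invariant equality or incidence relation between $\rho_1(x)$ and $\rho_2(y)$ to force $t$ to fix every element of $Y_2$, and conclude by faithfulness. In the same-type case you actually need surjectivity of both $\rho_1$ and $\rho_2$, not only $\rho_1$, but you have both.
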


\begin{proof}
By Lemma~\ref{lem:psl3-parabolic-descendants}, the actions on $G/H$ and $G/K$ are $3$-closed. There are surjective $G$-maps
\[\rho_X:G/H\longrightarrow G/P_X, \qquad \rho_Y:G/K\longrightarrow G/P_Y,\]
where $G/P_X$ and $G/P_Y$ are respectively point or line sets of $\operatorname{PG}(2,p)$.

Let $\pi\in G^{(3),G/H\sqcup G/K}$. Constant triples show that $\pi$ preserves each of the two coset spaces. By \cite[Lemma~2.2]{FreedmanGiudiciPraeger2024}, its restrictions are induced by elements $a,b\in G$. After composing $\pi$ with $a^{-1}$, we may assume that $\pi$ fixes $G/H$ pointwise and acts on $G/K$ as $\delta=a^{-1}b\in G$.

Let $x\in G/P_X$ and $y\in G/P_Y$, and choose $\alpha\in\rho_X^{-1}(x)$ and $\beta\in\rho_Y^{-1}(y)$. By \cite[Lemma~2.1]{FreedmanGiudiciPraeger2024}, there exists $t\in G$ such that $\alpha^t=\alpha$ and $\beta^t=\beta^\delta$. Hence $x^t=x$ and $y^t=y^\delta$.

Suppose first that $G/P_X$ and $G/P_Y$ are both point sets or both line sets. Since equality is preserved by $G$, we have
\[x=y \quad\Longleftrightarrow\quad x=y^\delta\]
for every $x$. Taking $x=y$ gives $y^\delta=y$.

Suppose now that one is the point set and the other is the line set. Since incidence is preserved by $G$, for every point $x$ and line $y$,
\[x\in y \quad\Longleftrightarrow\quad x\in y^\delta.\]
A line is determined by its incident points, and a point is determined by its incident lines. Thus again $y^\delta=y$.

Therefore $\delta$ fixes every point or every line of $\operatorname{PG}(2,p)$. Since these actions of $G$ are faithful, $\delta=1$. Hence $\pi$ is induced by the diagonal action of $a\in G$, and the action on $G/H\sqcup G/K$ is $3$-closed.
\end{proof}

\begin{theorem}\label{thm:psl3-prime}
Let $p$ be a prime with $p\equiv2\pmod3$. Then $\PSL_3(p)$ is totally $3$-closed.
\end{theorem}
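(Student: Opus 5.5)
The plan is to assemble the pieces of this subsection via the two-orbit criterion, Proposition~\ref{prop:two-orbit}. First I dispose of $p=2$: there $\PSL_3(2)\cong\PSL_2(7)$, which is totally $3$-closed by Theorem~\ref{thm:psl27}. For the rest of the proof I take $p\geq5$ with $p\equiv2\pmod 3$, so that $G=\PSL_3(p)=\SL_3(p)$. It then suffices to show that for every pair of proper subgroups $H,K<G$, with repetition allowed, the diagonal action of $G$ on $G/H\sqcup G/K$ is $3$-closed.

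Each proper subgroup lies in some maximal subgroup of $G$, and by the classification quoted in Proposition~\ref{prop:psl3-nonparabolic-host} \cite[Theorem~2.4]{King2005} that maximal subgroup is either a point or line parabolic, or one of the nonparabolic types (i)--(iv). By Lemma~\ref{lem:psl3-all-constituents}, every constituent $G/H$ is faithful and $3$-closed. If $H$ lies in a nonparabolic maximal subgroup, the same lemma also shows that $G/H$ has a base of size at most two.

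I then split into two cases.
\begin{enumerate}
\item[\textup{(a)}] At least one of $H,K$ lies in a nonparabolic maximal subgroup. Say it is $H$. Then $X=G/H$ has base size at most two, and both $X$ and $Y=G/K$ are faithful, transitive and $3$-closed. Proposition~\ref{prop:base-two} then gives $3$-closure of $X\sqcup Y$.
\item[\textup{(b)}] Both $H$ and $K$ lie in parabolic subgroups $P_X$ and $P_Y$. Here Lemma~\ref{lem:psl3-parabolic-sync} applies directly, whichever of points or lines each parabolic stabilizes.
\end{enumerate}
This covers every pair, including the case $H=K$. Proposition~\ref{prop:two-orbit} then yields total $3$-closure.

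The one point I need to check is that case (b) really requires nothing further. A subgroup may lie in both a parabolic and a nonparabolic maximal subgroup, but this is harmless, because case (a) takes precedence and case (b) handles everything else. I also need that Lemma~\ref{lem:psl3-parabolic-sync} was proved with no base-size hypothesis, which is true. The real work lives in the earlier lemmas, so I expect this step to be pure bookkeeping, with the only care needed in confirming that every maximal subgroup falls into one of the two cases for prime $p\equiv2\pmod3$.
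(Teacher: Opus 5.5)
Your proposal is correct and matches the paper's proof essentially step for step. The paper also disposes of $p=2$ via $\PSL_3(2)\cong\PSL_2(7)$ and, for $p\geq5$, applies Proposition~\ref{prop:base-two} (using Lemma~\ref{lem:psl3-all-constituents}) when some containing maximal subgroup is nonparabolic and Lemma~\ref{lem:psl3-parabolic-sync} when both are parabolic, concluding with Proposition~\ref{prop:two-orbit}.
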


\begin{proof}
For $p=2$, the result follows from $\PSL_3(2)\cong\PSL_2(7)$ and Theorem~\ref{thm:psl27}. Assume that $p\geq 5$.

Let $H,K<G$ be proper subgroups. By Lemma~\ref{lem:psl3-all-constituents}, the actions on $G/H$ and $G/K$ are $3$-closed. Choose maximal subgroups $M_H$ and $M_K$ containing $H$ and $K$, respectively.

If either $M_H$ or $M_K$ is nonparabolic, then the corresponding coset action has a base of size at most two by Lemma~\ref{lem:psl3-all-constituents}. Hence the diagonal action on $G/H\sqcup G/K$ is $3$-closed by Proposition~\ref{prop:base-two}.

If both $M_H$ and $M_K$ are parabolic, then the diagonal action is $3$-closed by Lemma~\ref{lem:psl3-parabolic-sync}.

Thus the diagonal action on $G/H\sqcup G/K$ is $3$-closed for every pair of proper subgroups $H,K<G$. The result follows from Proposition~\ref{prop:two-orbit}.
\end{proof}

\section{Negative results and the proof of the main theorem}\label{sec:higher}
In this section, we establish the semilinear and central-vector obstructions and complete the proof of Theorem~\ref{main}. In particular, we prove that $\PSL_4(q)$ is not totally $3$-closed for any prime power $q$, and that $\PSL_n(q)$ is not totally $3$-closed whenever $n\geq5$ and $q>2$. This leaves only the family $\PSL_n(2)$ with $n\geq5$.

\begin{proposition}
\label{prop:projective-local}
Let $n\geq3$, let $q$ be a prime power, and let $V=\FF_q^n$. Put
\[G=\PSL(V),\qquad K=\PGL(V),\qquad \Omega=\PP(V).\]
Then $\PGammaL(V)\leq G^{(3),\Omega}$.
\end{proposition}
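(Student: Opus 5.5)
We must show that every element of $\PGammaL(V)$, acting on $\Omega=\PP(V)$, preserves each $G$-orbit on $\Omega^3$. Since $\PGammaL(V)$ is generated by $K=\PGL(V)$ together with the Frobenius collineation $\phi:\langle v\rangle\mapsto\langle v^{(p)}\rangle$ (coordinatewise $p$-th powers in a fixed basis), it suffices to handle these two kinds of elements. For each we verify directly that, for every ordered triple $T=(x_1,x_2,x_3)\in\Omega^3$, the image $T^\gamma$ lies in $T^G$. This is exactly the membership condition in the definition of $G^{(3),\Omega}$, and $G^{(3),\Omega}$ is a group, so closure under the generators gives the claim.

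First we classify the $G$-orbits on $\Omega^3$ by showing that $K$ and $G$ have the same orbits there. Let $T$ be a triple and let $W$ be the span of representatives $v_1,v_2,v_3$; then $\dim W\le 3\le n$. Given $A\in\GL(V)$, we seek $A'\in\SL(V)$ with $\langle Av_i\rangle=\langle A'v_i\rangle$ for each $i$.

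We split by $\dim W$.
\begin{itemize}
\item If $\dim W<n$, extend a basis of $W$ to a basis of $V$ and compose $A$ with the linear map that fixes $W$ pointwise and rescales one complementary basis vector by $\det(A)^{-1}$. This composite lies in $\SL(V)$ and agrees with $A$ on each $v_i$.
\item If $\dim W=n$, then $n=3$ and the $v_i$ form a basis. Replace $A$ by $A\cdot\mathrm{diag}(\det A^{-1},1,1)$ in that basis. This rescales $v_1$ only, so the lines $\langle Av_i\rangle$ are unchanged.
\end{itemize}
In either case $T^A=T^{A'}$, so $K$ and $G$ have the same orbits on $\Omega^3$, and $K\le G^{(3),\Omega}$.

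Next, the $K$-orbit of $T$ is determined by the linear dependence pattern among the $v_i$: which coordinates are equal, and whether the three points are collinear. Two triples with the same pattern are related by a linear map, obtained by sending a basis of one span to the corresponding basis of the other and extending arbitrarily. Moreover, when the points are distinct and collinear, any three distinct points of a projective line are $\PGL_2$-equivalent, so collinear triples of distinct points form a single orbit.

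The semilinear map $\phi$ preserves equalities of points and linear (in)dependence of representatives, because $v\mapsto v^{(p)}$ is additive and bijective and satisfies $(cv)^{(p)}=c^pv^{(p)}$. Hence $T^\phi$ has the same pattern as $T$, so $T^\phi\in T^K=T^G$, and therefore $\phi\in G^{(3),\Omega}$.

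The main point needing care is the determinant-adjustment step in the case $\dim W=n=3$. There one must check that a diagonal correction in the basis $v_1,v_2,v_3$ fixes each line $\langle v_i\rangle$, rather than each vector. This holds because the correction only scales $v_1$ and leaves the other basis vectors unchanged. The remaining cases are routine.
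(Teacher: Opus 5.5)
Your proposal is correct and follows the paper's argument: you show that $G$ and $K$ have the same orbits on $\Omega^3$ by correcting the determinant with a map that fixes the three lines (scaling a complementary basis vector when $\dim W<n$, or scaling $v_1$ when the representatives form a basis), and then observe that $K$-orbits are determined by the equality and collinearity pattern, which semilinear maps preserve. The differences are cosmetic: you lift to $\GL(V)$ and $\SL(V)$ instead of using the determinant-class map $\delta$ modulo $n$-th powers, you reduce to the generators $\PGL(V)$ and Frobenius rather than treating an arbitrary semilinear projectivity directly, and you justify the collinear case of the orbit description, which the paper only asserts.
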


\begin{proof}
Define $\delta:K\longrightarrow \FF_q^\times/(\FF_q^\times)^n$ by $\delta([A])=\det(A)(\FF_q^\times)^n$. This is well defined and surjective, with kernel $G$.

Let $T=(L_1,L_2,L_3)\in\Omega^3$, and let $K_T$ be its pointwise
stabilizer in $K$. We claim that $\delta(K_T)=\FF_q^\times/(\FF_q^\times)^n$. If the distinct lines in $T$ span a subspace of dimension at most two, scale a complementary basis vector by an arbitrary element of $\FF_q^\times$. If $L_1,L_2,L_3$ are independent, choose nonzero $v_i\in L_i$, extend them to a basis of $V$, and scale $v_1$. In either case, the resulting element fixes $T$ and has arbitrary determinant class.

Suppose that $U=T^k$ for some $k\in K$. Choose $s\in K_U$ such that $\delta(s)=\delta(k)^{-1}$. Then $ks\in G$ and $T^{ks}=U$. Thus $G$ and $K$ have the same orbits on $\Omega^3$.

Two ordered triples lie in the same $K$-orbit if and only if the same coordinates are equal and, when the three points are distinct, they are either both collinear or both noncollinear. Every semilinear projectivity preserves these properties. Hence every element of $\PGammaL(V)$ preserves every $G$-orbit on $\Omega^3$, and therefore $\PGammaL(V)\leq G^{(3),\Omega}$.
\end{proof}

\begin{theorem}\label{thm:semilinear-main}
Let $n\geq3$ and let $q=r^f$ be a prime power. If $f>1$ or $\gcd(n,q-1)>1$, then the natural action of $\PSL_n(q)$ on $\PP^{n-1}(\FF_q)$ is not $3$-closed. In particular, $\PSL_n(q)$ is not totally $3$-closed.
\end{theorem}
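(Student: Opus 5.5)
By Proposition~\ref{prop:projective-local}, we already have $\PGammaL_n(q)\leq G^{(3),\Omega}$, where $G=\PSL_n(q)$ acts on $\Omega=\PP^{n-1}(\FF_q)$. It therefore suffices to show that $\PGammaL_n(q)$ induces on $\Omega$ a permutation group strictly larger than the one induced by $G$. Since $n\geq 3$, the natural action of $G$ is faithful. The action of $\PGammaL_n(q)$ on $\Omega$ is also faithful. Indeed, an element fixing every point is a semilinear map fixing every line. Comparing $v$, $w$ and $v+w$, and also $v$ and $\lambda v$, shows that it is a scalar multiple of the identity with trivial field automorphism. So we only need $|\PGammaL_n(q)|>|\PSL_n(q)|$.

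We split into the two cases of the hypothesis. If $\gcd(n,q-1)>1$, then
\[|\PGL_n(q):\PSL_n(q)|=|\FF_q^\times/(\FF_q^\times)^n|=\gcd(n,q-1)>1,\]
so any $[A]\in\PGL_n(q)$ with $\det A\notin(\FF_q^\times)^n$, such as $\operatorname{diag}(\omega,1,\dots,1)$ with $\omega$ a generator of $\FF_q^\times$, acts on $\Omega$ as a permutation outside $G$. If $f>1$, let $\phi$ be the coordinatewise Frobenius $[x_1:\dots:x_n]\mapsto[x_1^r:\dots:x_n^r]$. It fixes the points $\langle e_1\rangle,\dots,\langle e_n\rangle$ and $\langle e_1+\dots+e_n\rangle$, which form a projective frame. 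An element of $\PGL_n(q)$ fixing such a frame is trivial. However, $\phi$ moves $\langle e_1+ae_2\rangle$ whenever $a\notin\FF_r$, and such an $a$ exists because $f>1$. Hence $\phi$ is not induced by any element of $\PGL_n(q)$, and in particular not by any element of $G$.

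In either case, $G^{(3),\Omega}\neq G$, so the natural action is not $3$-closed. It is faithful because $G$ is simple (or directly, since the kernel consists of scalars, which are trivial in $\PSL$). Therefore $\PSL_n(q)$ is not totally $3$-closed. There is no real obstacle here; the only point needing care is that the witnessing permutation genuinely lies outside the image of $G$, which the frame argument and the determinant-class count settle.
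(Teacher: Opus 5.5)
Your proof is correct and follows the paper's route. You invoke Proposition~\ref{prop:projective-local} to get $\PGammaL_n(q)\leq G^{(3),\Omega}$, then use an element of $\PGL_n(q)\setminus G$ when $\gcd(n,q-1)>1$ and the Frobenius outside $\PGL_n(q)$ when $f>1$; your faithfulness check for $\PGammaL_n(q)$ and the frame argument simply make explicit details that the paper leaves implicit.
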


\begin{proof}
Put $G=\PSL_n(q)$ and $ \Omega=\PP^{n-1}(\FF_q)$. By Proposition~\ref{prop:projective-local}, $\PGammaL_n(q)\leq G^{(3),\Omega}$.

Suppose first that $d=\gcd(n,q-1)>1$. Then $[\PGL_n(q):G]=d>1$. Since
\[G<\PGL_n(q)\leq\PGammaL_n(q)\leq G^{(3),\Omega}, \]
the action is not $3$-closed.

If $f>1$, the Frobenius permutation
\[[x_1:\cdots:x_n]\longmapsto [x_1^r:\cdots:x_n^r]\]
belongs to $\PGammaL_n(q)$ but not to $\PGL_n(q)$. Thus in either case $G<G^{(3),\Omega}$, so the action is not $3$-closed.
\end{proof}

\begin{theorem}
\label{thm:central-vector-main}
Let $n\geq4$, let $q>2$ be a prime power, let $V=\FF_q^n$, and let $Z=Z(\SL(V))$. Put
\[G=\SL(V)/Z,\qquad \widetilde G=\GL(V)/Z,\qquad \Omega=(V\setminus\{0\})/Z.\]
Then the actions of $G$ and $\widetilde G$ on $\Omega$ are faithful, and $\widetilde G\leq G^{(k),\Omega}$ for every $1\leq k<n$. In particular, $G\cong\PSL_n(q)$ is not totally $3$-closed.
\end{theorem}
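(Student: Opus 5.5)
Faithfulness is the first step. An element $[A]\in\widetilde G$ acting trivially on $\Omega$ satisfies $Av\in Zv$ for every nonzero $v$. It therefore fixes every line, so $A=cI$ is scalar. Since $cv\in Zv$ for every $v$, we get $c\in Z$, and hence $[A]=1$. The same applies to $G\le\widetilde G$, because $Z\le\SL(V)$ and $G$ embeds in $\widetilde G$.

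The main step is local matching. Fix $\tilde g=[A]\in\widetilde G$ and a tuple $([v_1],\dots,[v_k])\in\Omega^k$ with $k<n$. We must find $B\in\SL(V)$ with $Bv_i\in Zv_i'$ for each $i$, where $v_i'=Av_i$ and the $[v_i']$ are the target points. Set $W=\langle v_1,\dots,v_k\rangle$, so $\dim W\le k<n$. Choose a complement $C$ to $W$ and define $B$ as $A$ on $W$ and as $A$ on $C$ followed by a correction. Concretely, $A$ maps $V$ onto $V=AW\oplus AC$. Pick any $A'$ that agrees with $A$ on $W$ and sends $C$ onto some complement of $AW$. Then compose on that complement with a linear map of determinant $\det(A)^{-1}$; this is possible because the complement has dimension at least $1$. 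The result $B$ satisfies $Bv_i=Av_i$ exactly and $\det B=1$. Hence $[B]\in G$ maps each $[v_i]$ to $[Av_i]=[v_i]^{\tilde g}$. This shows that $G$ and $\widetilde G$ have the same orbits on $\Omega^k$, so $\widetilde G\le G^{(k),\Omega}$. The hypotheses $n\ge4$ and $q>2$ are not needed for this part.

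For the conclusion, we take $k=3<n$, which uses $n\ge4$. It then suffices that $\widetilde G\neq G$, i.e.\ $\GL(V)\ne\SL(V)$, which holds since $q>2$ gives $|\FF_q^\times|>1$. Thus $G<\widetilde G\le G^{(3),\Omega}$. Together with faithfulness of $G$ on $\Omega$, this says the action is a faithful action of $G\cong\PSL_n(q)$ that is not $3$-closed, so $\PSL_n(q)$ is not totally $3$-closed.

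I expect no serious obstacle. The only point needing care is checking that the orbits are taken on points of $\Omega$ and not on vectors, and we matched vectors exactly, which is stronger. Also, when the $v_i$ are dependent the construction still works, since $B$ is determined on $W$ by $A$.
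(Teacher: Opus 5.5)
Your proof is correct and follows the paper's argument. Faithfulness comes from the fact that an element fixing every line is scalar. Local matching comes from correcting the determinant on a complement of $W=\langle v_1,\dots,v_k\rangle$: the paper uses $AD$ with $D$ fixing $W$ pointwise, and your correction on a complement of $AW$ is equivalent. One small slip: if $A'$ is allowed to be arbitrary, the correction must have determinant $\det(A')^{-1}$ rather than $\det(A)^{-1}$, but simply taking $A'=A$ avoids this.
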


\begin{proof}
Let $A\in\GL(V)$ fix every $Z$-orbit in $V\setminus\{0\}$. For each
$0\neq v\in V$, write $Av=z_vv$ for some $z_v\in Z$. If $u,v$ are linearly independent, then
\[z_uu+z_vv = A(u+v) = z_{u+v}(u+v), \]
so $z_u=z_v=z_{u+v}$. The same scalar occurs on every nonzero multiple of a vector. Hence $A=zI$ for some $z\in Z$. Thus the actions of $G$ and $\widetilde G$ on $\Omega$ are faithful.

Since $Z\leq\SL(V)$ and $\GL(V)/\SL(V)\cong\FF_q^\times$, we have $[\widetilde G:G]=q-1>1$.

Let $1\leq k<n$, let $A\in\GL(V)$, and consider $(Zv_1,\ldots,Zv_k)\in\Omega^k$. Put $W=\langle v_1,\ldots,v_k\rangle$. Since $\dim W<n$, choose a basis of $V$ extending a basis of $W$. There exists $D\in\GL(V)$ which fixes $W$ pointwise and satisfies $\det(D)=\det(A)^{-1}$. Indeed, it is enough to scale one basis vector outside $W$. Then $AD\in\SL(V)$, and $AD$ agrees with $A$ on the given tuple. Hence every element of $\widetilde G$ can be matched on every ordered $k$-tuple by an element of $G$. Therefore $\widetilde G\leq G^{(k),\Omega}$.

Taking $k=3$, we obtain $G<\widetilde G\leq G^{(3),\Omega}$, so the action is not $3$-closed.
\end{proof}

\begin{proposition}\label{prop:psl42-main}
The group $\PSL_4(2)$ is not totally $3$-closed.
\end{proposition}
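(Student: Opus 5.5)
Via the exceptional isomorphism $\PSL_4(2)\cong A_8$ (equivalently $\GL_4(2)\cong A_8$), I will exhibit a faithful action of $A_8$ that is not $3$-closed. The natural degree-$8$ action is the obvious candidate. $A_8$ is $6$-transitive on $\{1,\dots,8\}$, hence in particular $3$-transitive. The $A_8$-orbits on ordered triples are therefore determined exactly by the pattern of equalities among the coordinates. The same description holds for $S_8$, so $S_8$ preserves every $A_8$-orbit on $\Omega^3$. This gives
\[A_8^{(3)}=S_8>A_8,\]
and the natural action is not $3$-closed.

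The steps are as follows. First, I cite the isomorphism $\PSL_4(2)\cong A_8$, available from the Atlas or standard references such as Dixon--Mortimer. Second, I note that $A_n$ is $(n-2)$-transitive \cite[Section~2.9]{DixonMortimer}, so with $n=8$ it is $3$-transitive. Third, I conclude exactly as in Proposition~\ref{prop:q5}: any two triples with the same equality pattern lie in one $A_8$-orbit, so every permutation of the eight points lies in the $3$-closure. The action is faithful because $A_8$ is simple and acts nontrivially, so $\PSL_4(2)$ has a faithful action that is not $3$-closed.

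There is essentially no obstacle. The only point needing care is that the abstract isomorphism transports the action. Total $3$-closure is a property of the abstract group, so any faithful action of $A_8$ is a faithful action of $\PSL_4(2)$. It is also worth remarking why this separate argument is needed: Theorem~\ref{thm:semilinear-main} fails here because $q=2$ is prime and $\gcd(4,1)=1$, and Theorem~\ref{thm:central-vector-main} requires $q>2$.
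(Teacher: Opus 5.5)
Your argument is correct, but it uses a different faithful action from the paper. The paper lets $A_8$ act on the $28$ two-subsets of $\{1,\ldots,8\}$ and uses a parity-correction trick. The three entries of a triple cover at most six points, so a transposition $\tau$ of two remaining points fixes the image triple pointwise. This lets one replace an odd $s\in S_8$ by the even permutation $s\tau$, which has the same effect on the triple. Hence every $S_8$-orbit on triples is an $A_8$-orbit.

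You instead use the natural degree-$8$ action and the $(n-2)$-transitivity of $A_n$. Then the orbits of both $A_8$ and $S_8$ on $\Omega^3$ are determined by the equality pattern of the coordinates. This is exactly the mechanism of Proposition~\ref{prop:q5}, without needing sharpness.

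Your route is shorter and rests on a standard textbook fact. It also gives a stronger conclusion: the natural action is not even $6$-closed.

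The paper's route instead mirrors the local-matching argument of Theorem~\ref{thm:central-vector-main}, where an element of a larger group is matched on a short tuple by an element of the subgroup. This gives the negative results of Section~\ref{sec:higher} a common template. It also shows the obstruction persists in an action of $A_8$ that is not $3$-transitive. For the proposition itself, either proof suffices.
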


\begin{proof}
Identify $\PSL_4(2)\cong A_8$ and let $A_8$ act on the set $\Omega$ of two-subsets of $\{1,\ldots,8\}$ \cite[p.~3]{AtlasFiniteGroups}. This action is faithful.

Let $T=(X_1,X_2,X_3)\in\Omega^3$. The union $X_1\cup X_2\cup X_3$ contains at most six points. Let $s\in S_8$ and put $U=T^s$. Choose two points outside the union of the three entries of $U$, and let $\tau$ be the transposition of these two points. Then $\tau$ fixes $U$ pointwise. If $s$ is odd, then $s\tau$ is even and
\[T^{s\tau}=U^\tau=U=T^s.\]
Hence every $S_8$-orbit on $\Omega^3$ is an $A_8$-orbit. Therefore $S_8\leq A_8^{(3),\Omega}$. Since $A_8<S_8$, the action is not $3$-closed. Thus $\PSL_4(2)$ is not totally $3$-closed.
\end{proof}

We can now complete the classification.

\begin{proof}[Proof of Theorem~\textup{\ref{main}}]
Part~(1) is Theorem~\ref{thm:rank-one}.

For $n=3$, the positive cases follow from
Theorems~\ref{thm:psl33-main} and~\ref{thm:psl3-prime}. If $q=r^f$ with $f>1$, or if $q$ is prime and $q\equiv1\pmod3$, the group is not totally $3$-closed by Theorem~\ref{thm:semilinear-main}. This proves part~(2).

Suppose that $n=4$. If $q>2$, the result follows from Theorem~\ref{thm:central-vector-main}. If $q=2$, it follows from Proposition~\ref{prop:psl42-main}. This proves part~(3).

Finally, if $n\geq5$ and $q>2$, the result follows from Theorem~\ref{thm:central-vector-main}. This proves part~(4). Hence only the cases $\PSL_n(2)$ with $n\geq5$ remain.
\end{proof}

\appendix

\section{The GAP verification}\label{app:gap}

The following \GAP{} program carries out the computation used in the proof of Proposition~\ref{prop:psl33-E3}. It constructs $G=\PSL_3(3)$ in its action on the thirteen points of $\operatorname{PG}(2,3)$, enumerates the $646$ subgroups of the point stabilizer $P$, determines the twenty-four exceptional subgroups and their six $G$-conjugacy classes, and computes the groups $A_x$ and $(G^{(3),\Omega})_x$ for one representative of each class. The program was executed under \GAP{}~4.12.1 \cite{GAP4} and runs in under ten seconds.

\begin{lstlisting}[caption={The verification program \texttt{psl33\_exceptional\_gap.txt}.},label={lst:gap}]
# psl33_exceptional.g
#
# Verification of the exceptional-subgroup classification for
# G = PSL(3,3) = SL(3,3): enumerates all subgroups of a point
# stabilizer P, determines the set E(P) of subgroups H < P with
# H meeting every G-conjugate of itself, classifies E(P) up to
# G-conjugacy, and verifies that each exceptional coset action
# G/H is 3-closed.
#
# Run as:  gap -q -A -b psl33_exceptional.g

# G acting on the 13 points of PG(2,3), P = stabilizer of point 1.
pts := Set(Filtered(GF(3)^3, v -> not IsZero(v)), NormedRowVector);;
G := Image(ActionHomomorphism(SL(3,3), pts, OnLines));;
P := Stabilizer(G, 1);;
Print("|G| = ", Size(G), "  |P| = ", Size(P), "\n");

# All subgroups of P, by cyclic extension.
subs := Concatenation(List(ConjugacyClassesSubgroups(
          LatticeByCyclicExtension(P)), AsList));;
Print("subgroups of P: ", Length(subs), "\n");

# |H meet H^g| is constant on the double coset HgH, so one
# representative per double coset is an exhaustive test.
IsExceptional := function(H)
  local g;
  if Size(H) = 1 or Size(H) = Size(P) then return false; fi;
  for g in List(DoubleCosets(G, H, H), Representative) do
    if IsTrivial(Intersection(H, H^g)) then return false; fi;
  od;
  return true;
end;;
exc := Filtered(subs, IsExceptional);;
Print("exceptional subgroups: ", Length(exc), "\n");
Print("order distribution: ", Collected(List(exc, Size)), "\n");

# G-conjugacy classes of the exceptional subgroups.
classes := [];;
for H in exc do
  cl := First(classes, c -> Size(c[1]) = Size(H)
                            and IsConjugate(G, c[1], H));
  if cl = fail then Add(classes, [H, 1]);
  else cl[2] := cl[2] + 1; fi;
od;
Print("classes [order, members in P]: ",
      List(classes, c -> [Size(c[1]), c[2]]), "\n");

# For x = H in Omega = G/H, the group A_x of permutations of Omega
# fixing x and preserving every G_x-orbit on Omega^2 contains the
# stabilizer of x in G^(3). Backtracking enumerates A_x; the
# transported pair coloring then filters by the G-orbits on Omega^3:
# the G-orbit of (i,j,k) is determined by D[T_i(j)][T_i(k)], where
# T_i carries i back to x and D is the G_x-orbit coloring of pairs.
ClosureStabilizer := function(H)
  local act, GG, N, stab, orb, D, ncol, rp, Ti, i, o,
        A2, A3, img, used, search, pi, ok, j, k;
  act := FactorCosetAction(G, H);
  GG := Image(act);
  N := Index(G, H);
  stab := Stabilizer(GG, 1);
  orb := OrbitsDomain(stab, Cartesian([1..N], [1..N]), OnPairs);
  D := List([1..N], i -> []);
  for i in [1..Length(orb)] do
    for o in orb[i] do D[o[1]][o[2]] := i; od;
  od;
  ncol := Length(orb);
  rp := List([1..N], i -> RepresentativeAction(GG, 1, i));
  Ti := List([1..N], i -> List([1..N], j -> j^(rp[i]^-1)));
  A2 := [];
  img := [1];
  used := BlistList([1..N], [1]);
  search := function(v)
    local w, a, good;
    if v > N then Add(A2, PermList(ShallowCopy(img))); return; fi;
    for w in [1..N] do
      if not used[w] then
        good := true;
        for a in [1..v-1] do
          if D[a][v] <> D[img[a]][w]
             or D[v][a] <> D[w][img[a]] then
            good := false; break;
          fi;
        od;
        if good then
          img[v] := w; used[w] := true;
          search(v+1);
          used[w] := false; Unbind(img[v]);
        fi;
      fi;
    od;
  end;
  search(2);
  if Length(A2) = Size(H) then
    A3 := A2;
  else
    A3 := [];
    for pi in A2 do
      ok := true;
      for i in [1..N] do
        for j in [1..N] do
          for k in [1..N] do
            if D[Ti[i][j]][Ti[i][k]]
               <> D[Ti[i^pi][j^pi]][Ti[i^pi][k^pi]] then
              ok := false; break;
            fi;
          od;
          if not ok then break; fi;
        od;
        if not ok then break; fi;
      od;
      if ok then Add(A3, pi); fi;
    od;
  fi;
  return [Size(H), N, ncol, Length(A2), Length(A3),
          Set(A3) = Set(Elements(stab))];
end;;

Print("[ |H|, |Omega|, pair colors, |A_x|, |3-closure stab|, ",
      "= image of H ]\n");
for cl in classes do
  Print(ClosureStabilizer(cl[1]), "\n");
od;
Print("total time (ms): ", Runtime(), "\n");
QUIT;
\end{lstlisting}

The program produces the following output.

\begin{lstlisting}[language={},numbers=none,caption={Output of Listing~\ref{lst:gap}.},label={lst:gapout}]
|G| = 5616  |P| = 432
subgroups of P: 646
exceptional subgroups: 24
order distribution: [ [ 48, 9 ], [ 54, 4 ], [ 72, 3 ], [ 108, 4 ], 
  [ 144, 3 ], [ 216, 1 ] ]
classes [order, members in P]: [ [ 48, 9 ], [ 54, 4 ], [ 72, 3 ], [ 108, 4 ], 
  [ 144, 3 ], [ 216, 1 ] ]
[ |H|, |Omega|, pair colors, |A_x|, |3-closure stab|, = image of H ]
[ 48, 117, 345, 48, 48, true ]
[ 54, 104, 264, 54, 54, true ]
[ 72, 78, 148, 72, 72, true ]
[ 108, 52, 72, 108, 108, true ]
[ 144, 39, 22, 288, 144, true ]
[ 216, 26, 16, 1944, 216, true ]
total time (ms): 9482
\end{lstlisting}

The six rows of the final output table match the table in the proof of Proposition~\ref{prop:psl33-E3}. In each exceptional class, the stabilizer of $x$ in the $3$-closure has order $|H|$ and coincides with the image of $H$. An independent implementation of the same computation in \textsf{Python}, produced during the Albilich run that generated the original argument, returns identical results.

\section*{Report of AI use}
The main argument was generated with the assistance of Albilich \cite{gong2026albilichsteerableproofstateorchestration}, a generative artificial-intelligence assistant for mathematical research developed by the authors. All arguments were subsequently fully understood, completely rewritten, and independently verified by the authors.

\bibliographystyle{amsplain}
\bibliography{references}

\end{document}